\newcommand{\sing}{{\rm Sing}}
\newtheorem*{theorem*}{Theorem}
\newtheorem{teo}{Theorem}[section]
\newtheorem{prop}[teo]{Proposition}
\newtheorem{ddef}[teo]{Definition}
\newtheorem{example}[teo]{Example}
\newtheorem*{cor*}{Corollary}
\newtheorem*{remark}{Remark}
\newtheorem{assertion}{Assertion}
\newtheorem*{lem*}{Lemma}
\newtheorem*{teorA'}{Theorem A'}
\newtheorem*{fact*}{Fact}
\newtheorem{fact}{Fact}
\newtheorem{maintheorem}{Theorem}
\theoremstyle{definition}
\newcommand{\val}{{\rm Val}}
\newcommand{\C}{\mathbb{C}}
\newcommand{\CC}{\mathcal{C}}
\newcommand{\R}{\mathbb{R}}
\newcommand{\Pe}{\mathbb{P}}
\newcommand{\Z}{\mathbb{Z}}
\newcommand{\Q}{\mathbb{Q}}
\newcommand{\QQ}{{\bold  Q}}
\newcommand{\TT}{{\bold T}}
\newcommand{\DD}{{\bold D}}
\newcommand{\br}{{\mathcal B}}
\newcommand{\OO}{\mathcal{O}}
\newcommand{\A}{\mathcal{A}}
\newcommand{\D}{\mathcal{D}}
\newcommand{\E}{\mathcal{E}}
\newcommand{\F}{\mathcal{F}}
\newcommand{\LL}{{\mathcal L}}
\newcommand{\HH}{\mathcal{H}}
\newcommand{\W}{{\mathcal{W}}}
\newcommand{\dic}{\mathcal{D}^{\delta}}
\newcommand{\ndic}{\mathcal{D}^{\iota}}
\newcommand{\sepdic}{\mathcal{S}^{\delta}}
\newcommand{\sepndic}{\mathcal{S}^{\iota}}
\newcommand{\supp}{\mathcal{Z}}
\newcommand{\ic}{{\rm i}}
\newcommand{\cl}[1]{\mbox{$\mathcal{#1}$}}
\newcommand{\Iso}{{\rm Iso}}
\newcommand{\Dic}{{\rm Dic}}
\newcommand{\sep}{\mathcal{S}}
\DeclareMathOperator{\gr}{Gr}
\newcommand*\xbar[1]{ %
   \hbox{ %
     \vbox{%
       \hrule height 0.3pt 
       \kern0.35ex
       \hbox{%
         \kern-0.1em
         \ensuremath{#1}%
         \kern-0.1em
       }%
     }%
   }%
}
\newcommand*\xxbar[1]{%
   \hbox{%
     \vbox{%
       \hrule height 0.3pt 
       \kern0.4ex
       \hbox{%
         \kern-0.1em
         \ensuremath{#1}%
         \kern-0.1em
       }%
     }%
   }%
}
\begin{document}

\setcounter{section}{0}
\setcounter{teo}{0}
\setcounter{exe}{0}

\title{Logarithmic models and meromorphic functions in dimension two}

\author{Jane Bretas}
 \address{Departamento de Matem\'atica --- 
Centro Federal de Educa\c c\~ao Tecnol\'ogica de Minas Gerais}
\curraddr{Av. Amazonas, 7675 --- 30510-000
  --- Belo Horizonte, BRAZIL}
\email{janebretas@cefetmg.br}

\author{Rog\'erio   Mol }
\address{Departamento de Matem\'atica ---  Universidade Federal de Minas Gerais}
\curraddr{Av. Ant\^onio Carlos 6627 --- 31270-901 --- Belo Horizonte, BRAZIL.}
\email{rmol@ufmg.br}

\subjclass[2020]{32S65, 37F75, 34Cxx, 32A20}
\keywords{Homomorphic foliation, holomorphic vector field, real analytic vector field, logaritmic foliation, meromorphic function, sectorial decomposition}

\thanks{Second   author  partially financed by Pronex-Faperj}
\maketitle

\begin{abstract}
In this article we describe the construction of logarithmic models in both real and complex cases.
A logarithmic model is a germ of closed meromorphic 1-form with simple poles --- and
the analytic foliation defined by it --- produced upon
some specified geometric data: the structure of dicritical (non-invariant) components in the exceptional divisor of its reduction of singularities, a prescribed finite set of separatrices --- invariant analytic branches at the origin ---
and Camacho-Sad indices with respect to these separatrices. As an application, we use logarithmic models in order to construct real and complex germs of meromorphic
functions with a given indeterminacy structure and prescribed sets of zeroes and poles.
Also, in the real case, in the specific case where all trajectories accumulating at the origin are contained
in analytic curves, logarithmic models are used in order to build germs of analytic vector fields with a given Bendixson's sectorial decomposition
  of a neighborhood of $0 \in \R^{2}$ into
hyperbolic, parabolic and elliptic sectors.  As a consequence, we can produce real meromorphic functions with   prescribed sectorial decompositions.
\end{abstract}

\tableofcontents

 \medskip \medskip

\section{Introduction}
The fundamental  motivation for this article is the following question:

\medskip
\par \noindent
{\it
Are there germs of meromorphic functions, in the real or in the complex plane, with  a given indeterminacy structure   and
prescribed zeroes and poles?
}
\medskip

\noindent
By the indeterminacy structure of a meromorphic function  we mean the information given by the dicritical components of its resolution by a sequence of quadratic
blow-ups, i.e. the irreducible components of the exceptional divisor not   contained in a level set of the lifted function.
We prescribe zeroes and poles by providing a finite set $\sep$ of analytic branches, i.e. irreducible analytic curves at the origin.
In the complex case, we can give a positive  answer to this question  once we complete $\sep$ into a finite set
of branches satisfying some conditions of axiomatic nature, so that it forms a set of ``separatrices'' (Theorem \ref{teo-structure-meromporphic}).
In the real case, the answer is always positive, whoever $\sep$ is (Theorem \ref{teo-structure-meromporphic-real}).

We handle the above question in the broader context of the local theory of singular  holomorphic foliations in the complex plane, by  addressing the specific problem of building the so-called logarithmic models.
They consist in the construction of germs of singular  foliations of logarithmic type ---
those defined by  closed meromorphic $1$-forms with simple poles --- with a prescribed set of geometric data:
 dicritical components (non-invariant components of the exceptional
divisor in the reduction of singularities),
separatrices (invariant complex analytic branches), and Camacho-Sad indices. The latter are   local residue-type invariants,
associated with   pairs
singularity/separatrix, that play a significant role in the local topological characterization of the foliation and also impose
combinatorial restrictions along its reduction of singularities  \cite{camacho1982,linsneto1986,suwa1995}.
These data may originate, for instance, from   a   germ of singular complex analytic foliation $\F$ of \emph{generalized curve} type \cite{camacho1984}, which means
that, if $\pi$ is a reduction of singularities of $\F$ by  a sequence of
quadratic blow-ups, then the lifted foliation
$\pi^{*}\F$ has no saddle-node singularities, i.e. simple singularities with one zero eigenvalue (see Examples \ref{ex-foliation-structure} and \ref{ex-foliation-separatrices}).
In this case, a logarithmic model for $\F$ is
a germ of logarithmic foliation $\LL$ that, roughly speaking, has
the same reduction of singularities of $\F$, with dicritical  components positioned in the same places, having the same
 isolated separatrices   (those whose lift  by $\pi$ touch invariant components of the exceptional divisor) and the same Camacho-Sad indices.
All of this is considered   up to the existence of some extra  points, placed in the smooth part of dicritical components,
 where $\LL$ has a holomorphic first integral and   demands additional blow-ups in order to complete its reduction of
 singularities. These are called
 \emph{escape points}. They give rise to some isolated separatrices for $\LL$,  called accordingly \emph{escape separatrices}.
The logarithmic model is said to be \emph{strict} if there are no escape points, thus $\F$ and $\LL$ have the
same reduction of singularities and, hence,   coincident sets of isolated  separatrices.

Logarithmic models of complex foliations in dimension two were studied in the articles \cite{corral2003} and
\cite{canocorral2006}. 
In them, logarithmic foliations are built upon models provided by concrete foliations of generalized
curve type.
Our objective in this article is to make a  construction of general nature,  in  both real and complex  cases,
founded on data
in principle not coming from an actual foliation.
In fact, what we seek is a  method for producing examples of logarithmic   foliations
--- especially of those defined by meromorphic functions ---  satisfying  specific   properties
  described in terms of some  geometric data, as explained in the paragraph above.
This being said, our approach is  substantially different from that of the  two aforementioned papers,
 their main results being particular cases of our construction (see Examples \ref{ex-foliation-structure},  \ref{ex-foliation-separatrices}, \ref{ex-foliation-indices} and \ref{ex-foliation-quadruplet}).
It bears however some resemblance to the philosophy of \cite{corral2012},
where logarithmic models for real non-dicritical foliations are studied.

The core of our method is the consideration of blocks of information, with an axiomatic structure inspired in the properties of foliations  of generalized curve type,
  having, in principle, no connection with a concrete foliation. They are built upon
the various steps of a sequence of quadratic blow-ups $\pi$, that simulates the reduction of singularities of some hypothetical foliation.
The first of these blocks is formed by the exceptional divisors of $\pi$ and of its intermediate factors, having some distinguished irreducible components
that  play the role of dicritical components of the reduction divisor  of a singular foliation.
The second is a finite set $\sep_{0}$ of complex
analytic branches at $(\C^{2},0)$, along with their strict transforms by all  steps of $\pi$,
 that impersonate   separatrices of a foliation and are thus named this way.
 These two  blocks are called, respectively, \emph{dicritical structure} (Definition \ref{def-dic-structure}) and \emph{configuration of separatrices}
(Definition \ref{def-conf-separatrices}). Together
 they form a \emph{dicritical duplet}.
Then, we have a set of local complex invariants assigned to separatrices and non-dicritical components,
which emulate Camacho-Sad indices, called \emph{system of indices} (Definition \ref{def-system-indices}).
Its attachment to a dicritical duplet forms a \emph{dicritical triplet}.
Our goal is then  to assign, to  separatrices and non-dicritical components, invariants in $\C^{*}$ called \emph{residues}, to be assembled in
a fourth block called  \emph{system of residues} (Definition \ref{def-system-residues}), in
a consistent way with the information provided by a dicritical triplet (Definition \ref{def-consistent}), in order to form a
  \emph{dicritical quadruplet}. Simultaneously, we wish that  a logarithmic $1$-form with poles in $\sep_{0}$ and the given residues defines a germ
of singular foliation $\LL$ that is a \emph{logarithmic model}
for all blocks of data considered (Definition \ref{def-Q-logarithmic}).
In a nutshell, up to the existence of escape points and escape separatrices, this means that the reduction of singularities of $\LL$, its isolated separatrices, Camacho-Sad indices and residues are precisely those provided
by the dicritical quadruplet.
The existence of  consistent data of residues and of a logarithmic $1$-form with these properties is given by Theorem  \ref{teo-quadruplet},
which will be the main source for all other results  in this paper.
Its proof relies mostly on arguments of \cite{canocorral2006}, properly adapted to our objects.
 The axiomatic construction of our dicritical multiplets is carried out in Section
 \ref{section-dicritical-structure}
and in Section \ref{section-system-indices}, while the proof of Theorem \ref{teo-quadruplet} is
done in Section \ref{section-existence-quadruplet}.

Next, in  Section \ref{section-moromorphic-functions}, we state and prove Theorem  \ref{teo-structure-meromporphic},  which gives an answer to the complex version of the question that opens this article.
We start with a dicritical duplet containing the prescribed data of indeterminacy and branches of zeroes and poles.
Then, using   combinatorial tools of \cite{camacho1988},
we  produce a system of rational negative  indices   at the final level --- the one corresponding to the whole sequence of blow-ups
 $\pi$.
This condition, necessary for them to be  actual
Camacho-Sad indices of reduced models of  a meromorphic function,
 also happens to be sufficient. Indeed,
Theorem \ref{teo-quadruplet}  provides a system of rational residues, so that  the wished meromorphic function  arises in a straightforward way.

Then, in Section \ref{section-real-models}
and in Section \ref{section-escape}, we turn our attention to the real case,  proving the existence of   real logarithmic models  in Theorem \ref{teo-real-logmodel}.
Our point of departure is a sort of real  dicritical triplet  containing only real information (sequences of    quadratic blow-ups,   analytic branches and  indices, all of them real),
with a weakened axiomatic structure (Definition \ref{def-real-quasi-dicritical-triplet}). Its complexification   can be completed into a  dicritical triplet,
symmetric with respect to the involution induced
by the  complex conjugation.
The application of Theorem \ref{teo-quadruplet} then provides a system of real residues and a logarithmic $1$-form,
both symmetric with respect to the conjugation. The restriction of this complex logarithmic  $1$-form   to the real trace
of the complexification provides the
logarithmic model we seek.
It turns out that, in the real case, by adding some dicritical separatrices with appropriate residues,  real escape points can be eliminated
(Proposition \ref{prop-withour-real-escape}). This, combined with Theorem \ref{teo-real-logmodel},  can be used in order  to produce strict real logarithmic models,
as we do in Theorem \ref{teo-strict-logmodel-real}, where they are obtained for
 germs of real analytic foliations of real generalized curve type, i.e. whose real reductions of singularities do not contain   saddle-node singularities. Also,  by the application of
  the ideas leading to Theorem \ref{teo-real-logmodel},   we  obtain
Theorem  \ref{teo-structure-meromporphic-real}, which gives an affirmative answer to the real version of our opening question.

Finally, in the last part of the article, Section \ref{section-sectorial}, we  study   sectorial decompositions of germs of real analytic vector fields.
A classical result by I. Bendixson \cite{bendixson1901} asserts that a real analytic vector field, with an isolated
singularity at $0 \in \R^{2}$, of non-monodromic type, induces  a  decomposition of a small neighborhood of the origin  in
a finite number of sectors.
The term non-monodromic refers to the fact that there exists at least one characteristic orbit,
i.e. one accumulating at the origin
with a well defined tangent at the limit point.
Then, each  sector is limited by a pair of characteristic orbits and bears a classification into  hyperbolic, parabolic or elliptic,
according to the topological behavior of the orbits inside of it (see Section \ref{section-sectorial}
and also \cite{ilyashenko2008,roussarie2020} for a   detailed description).
The essential information concerning a sectorial decomposition --- a finite number of real analytic semicurves, with the mentioned accumulation properties, defining sectors classified into hyperbolic, parabolic or elliptic  --- would then define
a  \emph{sectorial model}.
A second guiding question, also a motivation for the development
of real logarithmic models  described above,
is posed in the following terms:
\medskip
\par \noindent
{\it
Given a sectorial model at $(\R^{2},0)$, is there a germ of real analytic vector field
that realizes it?
}
\medskip

\par
We answer this question in a particular case.
Within the family
of  non-monodromic  vector fields, we consider those
whose orbits accumulating at $0 \in \R^{2}$ are contained in the trace of
real analytic branches and call them $\ell$-analytic (Definition \ref{def-l-analytic}).
With them, we associate  more refined sectorial models, also called
$\ell$-analytic, that aggregate  infinitesimal information of
dicritical components, which in turn account for the existence of parabolic and elliptic sectors
(Definition \ref{def-sectorial-modeled}).
An almost immediate application of
the existence of real logarithmic models gives
Theorem  \ref{teo-sectorial-modeled}, which   asserts that any such a model is realized by a germ of $\ell$-analytic vector field, and
Theorem \ref{teo-sectorial-modeled-meromorphic}, which assures that it can be actually realized by a germ of real meromorphic function.
These two theorems and the constructive approach in their proofs provide, as an interesting application to mathematics design, a
systematic  method for the construction  of nice flower-type analytic (or algebraic) curves in the real plane (see Section \ref{sec-examples}).

\section{Dicritical structure and configuration of separatrices}

\label{section-dicritical-structure}

The problems we deal with  concern the construction of singular holomorphic foliations with   prescribed dicritical components in their reductions of singularities
and prescribed sets of invariant analytic  curves. As we commented in the introduction, we approach this by means of abstract structures
that handle these information in an axiomatic manner. In this section we introduce two of them:
dicritical structure and configuration of separatrices.

\subsection{Infinitesimal classes}

We start by establishing an intrinsic way to identify an irreducible component of the exceptional divisor of a sequence of quadratic blow-ups.
Let $\pi:(\tilde{M},\D) \to (\C^{2},0)$ be a sequence of  quadratic  blow-ups. The set
$\D = \pi^{-1}(0)$, the \emph{exceptional divisor} of $\pi$, is a normal crossings divisor with finitely many irreducible components --- to which
we refer simply as \emph{components} --- biholomorphic to projective lines.
We call the regular points of $\D$  \emph{trace points} and its singular points   \emph{corners}.

We denote  by  $\br_{0}$  the family of all complex analytic branches --- i.e. germs of irreducible complex analytic curves --- at $(\C^{2},0)$.
More generally, if $M$ is a complex surface, represent by
$\br_{p}$ the family   of  complex analytic branches at $p \in M$ and,
if $\CC \subset M$ is a complex analytic curve,  by $\br_{p}(\CC) \subset \br_{p}$ the   set of irreducible local components
of $\CC$ at $p$.
If $B \in \br_{0}$, we say   that the strict transform $\pi^{*}B$ is \emph{transverse} to $\D = \pi^{-1}(0)$ if $\pi^{*}B$ is smooth and touches $\D$ transversally  at a trace point.
In this case, we denote by
$D^{\pi}(B)$ the unique  component of $\D$ touched by $\pi^{*}B$.

Let $D \subset \D$ be a component of
  the exceptional divisor of the above $\pi$. We associate with $D$ an
 \emph{infinitesimal class} $\kappa(D)$, which is,
essentially,  the family of all sequences of blow-ups that ``generate $D$'':   sequences of
blow-ups   or   blow-downs over $(\tilde{M}, \D)$, the latter not  collapsing $D$, considered up to isomorphism.
For the sake of formality, we give  a more precise definition.
Let us consider, along with $(\pi,D)$, all    pairs $(\pi',D')$, where  $\pi': (\tilde{M}', \D') \to (\C^{2},0)$ is a sequence of blow-ups and
$D' \subset \D'$ is a component, such  that:
\begin{itemize}
  \item either $\pi$ factors $\pi'$, that is,    there    exists  a sequence  of blow-ups
$\varsigma$  such that $\pi' = \pi \circ \varsigma$, and $D' = \varsigma^{*}D$;
  \item or $\pi'$ factors $\pi$, that is,    there    exists  a sequence of blow-ups
$\varsigma'$  such that $\pi = \pi' \circ \varsigma'$, and $D  = \varsigma'^{*}D'$.
\end{itemize}
Next, extend this construction putting, in place of $\pi$ and $D$, any sequence of blow-ups isomorphic to $\pi$ and the isomorphic image of $D$.
The family of all these mappings is the \emph{infinitesimal class} $\kappa(D)$ of   $D \subset \D$. Any of its elements is a
 a \emph{realization} of $\kappa(D)$.
Recall that two sequences of blow-ups
$\pi^{(1)}: (\tilde{M}^{(1)} , \D^{(1)}) \to (\C^{2},0)$ and  $\pi^{(2)}: (\tilde{M}^{(2)} , \D^{(2)}) \to (\C^{2},0)$
are \emph{isomorphic} if there is   a germ of biholomorphism $\Phi: (\tilde{M}^{(1)} , \D^{(1)} ) \to (\tilde{M}^{(2)}, \D^{(2)})$ such
that $\pi^{(1)}  = \pi^{(2)} \circ \Phi$.
A realization of $\kappa(D)$ is \emph{minimal} if it is factored by no other realization of $D$. A minimal realization is unique up isomorphism.

If $\D^{*}$ is a union of  components of $\D$, then we define the infinitesimal class
$\kappa(\D^{*})$ as the family of sequences of blow-ups  that simultaneously realize  all infinitesimal classes $\kappa(D)$, for $D \subset \D^{*}$, and also separate these components.
The latter means that, if $\pi':(\tilde{M}',\D') \to (\C^{2},0)$ is a realization of $\kappa(\D^{*})$, then
 no two components of the divisor $\D'= \pi'^{-1}(0)$ corresponding to components in $\D^{*}$  intersect.
We can also define, in an evident manner, a \emph{minimal realization} for $\kappa(\D^{*})$, which is unique  up
to isomorphism.

In the same line of the   above definitions, if $\sep  \subset \br_{0}$ is a finite set,  we define its \emph{equisingularity class}  $\varepsilon(\sep)$
as the family of all sequences of blow-ups $\pi: (\tilde{M}, \D) \to (\C^{2},0)$  such
that  $\pi$ desingularizes $\sep$. By this we mean that the strict transforms $\pi^{*}B$, for $B\in \sep$, are   disjoint and
transverse to $\D$. Each such a sequence $\pi$ is a \emph{realization} of $\varepsilon(\sep)$, and we can also talk about
 \emph{minimal realizations}, all of them isomorphic.

\subsection{Dicritical structures}

Let $\pi:(\tilde{M},\D) \to (\C^{2},0)$ be a composition of  quadratic blow-ups as above. We write
$\pi = \sigma_{1} \circ \cdots \circ \sigma_{n}$ its factorization into individual blow-ups
$\sigma_{j} = (\tilde{M}_{j}, \D_{j}) \to (\tilde{M}_{j-1}, \D_{j-1})$, for $j=1,\ldots,n$, where $\D_{j} = (\sigma_{1} \circ \cdots \circ \sigma_{j})^{-1}(0)$, $\sigma_{j}$ is a punctual blow-up at $q_{j-1} \in \D_{j-1}$, with the convention that $q_{0} = 0$, $(\tilde{M}_{0},\D_{0}) = (\C^{2},0)$ and
$(\tilde{M}_{n},\D_{n}) = (\tilde{M},\D)$.
All objects and invariants pertaining to $\tilde{M}_{j}$ are said to be at \emph{level} $j$, with $j=0$ and $j=n$ being, respectively, the \emph{initial} and \emph{final levels}.
For each $j=1,\ldots,n$,  write the factorization $\pi = \pi_{j} \circ  \varsigma_{j} $ into
maps
$ \varsigma_{j}: (\tilde{M}_{n}, \D_{n}) \to (\tilde{M}_{j}, \D_{j})$
and
$\pi_{j}: (\tilde{M}_{j}, \D_{j}) \to (\tilde{M}_{0}, \D_{0})$,
where
$\varsigma_{j} = \sigma_{j+1} \circ \cdots \circ \sigma_{n}$ and
$\pi_{j} = \sigma_{1} \circ \cdots \circ \sigma_{j}$. This is depicted in the diagram:

\begin{equation}
\label{eq-sequence-blowups}
 \rlap{$\underbrace{\phantom{ (\tilde{M}_{n}, \D_{n}) \stackrel{\sigma_{n}}{\longrightarrow} \cdots  \stackrel{\sigma_{j+1}}{\longrightarrow}
 (\tilde{M}_{j}  , \D_{j})}}_{\varsigma_{j}}$}
(\tilde{M}_{n}, \D_{n}) \stackrel{\sigma_{n}}{\longrightarrow} \cdots  \stackrel{\sigma_{j+1}}{\longrightarrow}
     \overbrace{ (\tilde{M}_{j}  , \D_{j})
 \stackrel{\sigma_{j}}{\longrightarrow} (\tilde{M}_{j-1}, \D_{j-1})
 \stackrel{\sigma_{j-1}}{\longrightarrow} \cdots \stackrel{\sigma_{1}}{\longrightarrow} (\tilde{M}_{0}, \D_{0})
  }^{\pi_{j}} \simeq (\C^{2},0).
\end{equation}

\begin{ddef}
\label{def-dic-structure}
{\rm
A \emph{dicritical structure} at $(\C^{2},0)$  with underlying sequence of blow-ups
$\pi:(\tilde{M},\D) \to (\C^{2},0)$ is the set of  data $\Delta$ given, for each $j=1,\cdots,n$, by a
 decomposition $\D_{j} = \dic_{j} \cup  \ndic_{j}$, where   $\dic_{j}$ and $\ndic_{j}$
are  unions of  components of the exceptional divisor $\D_{j} = \pi_{j}^{-1}(0)$ satisfying:
\begin{enumerate}[label=(D.\arabic*)]
\item $\dic_{j}$ and $\ndic_{j}$ have no common components for $j=1,\cdots, n$;
\item $D$ is in $\dic_{j-1}$ if and only if the strict transform $\sigma_{j}^{*}D$ is in $\dic_{j}$,
for $j=2,\cdots, n$;
\item two components in $\dic_{n}$ do not intersect.
\end{enumerate}
}\end{ddef}
Components in  $\dic_{j}$ are called \emph{dicritical} or \emph{non-invariant},
whereas the ones in $\ndic_{j}$  are called \emph{non-dicritical} or \emph{invariant}.
If $D = \sigma_{j}^{-1}(q_{j-1}) \subset \dic_{j}$, we will also call the blow-up $\sigma_{j}$ dicritical, the same happening in the non-dicritical case.
We will say that $n$ is the height of $\Delta$ and we denote  $n = h(\Delta)$.
Note that, if we choose a point $p \in \D_{j}$, for $j=1,\ldots,n$, the dicritical structure $\Delta$ can be localized at $p$, by considering the decomposition into dicritical and non-dicritical components induced
by the sequence of blow-ups $\varsigma_{j}$ over $p$. We denote this localized dicritical structure by $\Delta_{p}$.
If $\varsigma_{j}$ is trivial over $p$, we say that the localized  dicritical structure is  \emph{trivial}.

Let $\Delta$ and $\Delta'$ be two dicritical structures with underlying sequences of blow-ups
$\pi = \sigma_{1} \circ \cdots \circ \sigma_{n}$ and
$\pi' = \sigma_{1}' \circ \cdots \circ \sigma_{n'}'$.
We say that $\Delta'$ \emph{dominates} $\Delta$, and we denote $\Delta' \geq \Delta$,  if:
\begin{itemize}
\item
$n' = h(\Delta') \geq  h(\Delta) = n$ and  $\sigma_{j}' = \sigma_{j}$ for $j=1,\ldots,n$;
\item  the decomponsition $\D_{j} = \dic_{j} \cup  \ndic_{j}$ is the same for $\Delta$ and $\Delta'$, for $j=1,\ldots,n$;
\item if $n'>n$, then $\varsigma_{n}' = \sigma_{n+1}' \circ \cdots \circ \sigma_{n'}'$ is a composition of non-dicritical blow-ups which is non-trivial
only over finitely many trace points of components in  $\dic_{n}$. 
\end{itemize}

\begin{example}
\label{ex-foliation-structure}
{\rm
Let $\F$ be a germ of singular holomorphic foliation at $(\C^{2},0)$, induced by a germ of holomorphic
$1$-form $\omega$ with isolated singularity at the origin. We take $\pi:(\tilde{M},\D) \to (\C^{2},0)$, a \emph{minimal reduction of singularities} for $\F$
(see \cite{seidenberg1968},
\cite{camacho1984}).
This means that we obtain a strict transform foliation $\tilde{\F} =  \pi^{*}\F$ around the
exceptional divisor $\D = \pi^{-1}(0)$ with the following properties:
\begin{itemize}
\item $\tilde{\F}$ has a finite number of
singularities over $\D$, which are all \emph{simple}, meaning that, at each of them,
  $\tilde{\F}$ is defined by a germ of holomorphic  vector field with non-nilpotent linear
part with eigenvalues with ratio outside $\Q_{+}$;
\item   $\tilde{\F}$ has no singularities and no points of tangency in the non-invariant components
of $\D$;
\item  two non-invariant components of $\D$ do not intersect;
\item  $\pi$ is minimal with these properties.
\end{itemize}
If we write the decomposition of $\pi$ as in \eqref{eq-sequence-blowups}, at each step
we have a foliation $\tilde{\F}_{j}$, the strict transform of $\F$ by $\pi_{j}$
 (with the convention that $\tilde{\F}_{n} = \tilde{\F}$), which is
a germ of singular holomorphic foliation around $\D_{j}$.
We define a  decomposition $\D_{j}= \dic_{j} \cup \ndic_{j}$  by setting
 $D \subset \dic_{j}$ if and only if $D$ is non-invariant by  $\tilde{\F}_{j}$.
 This establishes a dicritical structure that we denote by $\Delta(\F)$ or by  $\Delta(\omega)$.
}\end{example}

\subsection{Configuration of separatrices}

Considering a dicritical structure $\Delta$ at $(\C^{2},0)$, as defined in the previous subsection, we
 write the decomposition
\begin{equation}
\label{eq-connectedcomponent}
\ndic_{n} = \cup_{k=1}^{\ell} \cl{A}_{n,k}^{\iota}
\end{equation}
 into disjoint
topologically connected components.
These components, which appear in the following definition, also play an important role in some
of the arguments in this text.
For the next definition, recall that the \emph{valence} of a component $D \subset \D_{n}$, denoted $\val(D)$, is the number of points of intersection of $D$ with other components of $\D_{n}$.

\begin{ddef}
\label{def-conf-separatrices}
{\rm Let $\Delta$ be a dicritical structure at $(\C^{2},0)$.
A  \emph{configuration of separatrices} framed on
 $\Delta$
 is the collection of information $\Sigma$   given
by a finite set $\sep_{0}  \subset \br_{0}$, having a decomposition into
disjoint subsets $\sep_{0} = \sepdic_{0} \cup \sepndic_{0}$, together with its propagation along
the sequence of blow-ups  $\pi$,
$\sep_{j} = \sepdic_{j} \cup \sepndic_{j}$, where $\sepdic_{j} = \{\pi_{j}^{*}S; S \in \sepdic_{0}\}$ and
$\sepndic_{j} = \{\pi_{j}^{*}S; S \in \sepndic_{0}\}$,
 satisfying the following conditions:
\begin{enumerate}[label=(S.\arabic*)]
\item $\pi$ is a realization of the equisingularity class $\varepsilon(\sep_{0})$ and $S \in \sepdic_{0}$  if and only if  $D^{\pi}(S)  \subset \dic_{n}$;
\item for every connected component $\cl{A}_{n,k}^{\iota}$,  there is $S \in \sepndic_{0}$ such
that $D^{\pi}(S) \subset \cl{A}_{n,k}^{\iota}$;
\item if $D \subset \dic_{n}$ is such that $\val(D) = 1$, then there exists $S \in \sepdic_{0}$
such that $D^{\pi}(S) = D$;
\item $\pi$ is a minimal sequence of blow-ups that simultaneously realizes the infinitesimal class  $\kappa(\dic_{n})$ and
the equisingularity class  $\varepsilon(\sep_{0})$.
\end{enumerate}
}\end{ddef}
Taking into account Definitions \ref{def-dic-structure} and \ref{def-conf-separatrices},
we say that the pair $\DD = (\Delta,\Sigma)$ is a
 \emph{dicritical duplet} at $(\C^{2},0)$.
 Branches in $\sepdic_{j}$ and in $\sepndic_{j}$  are called, respectively,
 \emph{dicritical} and \emph{isolated separatrices}.
 The \emph{support} of $\DD$ is defined, at each level
 $j=1,\ldots,n$, as the analytic curve
$\supp_{j} = \sep_{j} \cup \ndic_{j}$, seen as germ around
$\D_{j}$.  Define, at level $j$, the singular set of $\DD$ as the singular set of its support,
$\sing(\supp_{j})$.
At the initial level, we have $\supp_{0} = \sep_{0}$.
Note that $\sepdic_{0}$
may be empty. On the other hand, taking (S.2) into account, $\sepndic_{0}$ is empty if and only if $h(\Delta) = 1$ and
 $\D_{1} =  \dic_{1}$. This is the \emph{radial case}, which is trivial for our purposes and
 will not be taken as our starting point. Nevertheless,  in some inductive constructions,
 the radial case may appear at  intermediate levels of the dicritical structure,
as a result of
the localization of our objects.
Anyhow, if we wish to include the initial radial case, we should modify condition (S.3) and also ask that, if $D \in \dic_{n}$ is such that
$\val(D) =0$ --- which implies that $n = h(\Delta) =1$ --- then there are at least two separatrices in $\sepdic_{0}$.

Given two dicritical duplets $\DD = (\Delta,\Sigma)$ and  $\DD' = (\Delta',\Sigma')$ at $(\C^{2},0)$,
we say that $\DD'$ \emph{dominates} $\DD$, and denote
$\DD' \geq \DD$,
 if:
\begin{itemize}
\item $\Delta' \geq  \Delta$;
\item $\sep_{0}'  \supset \sep_{0}$, where $\sep_{0}' $ denotes the set of separatrices of
$\Sigma'$ at level $0$, and $\sep_{0}'  \setminus \sep_{0}$  contains only isolated separatrices for $\Sigma'$;
\item separatrices in $\sep_{0}$ are qualified in the same way, as dicritical or isolated, for both $\Sigma$ and $\Sigma'$;
\item if $S \in \sep_{0}'  \setminus \sep_{0}$,
then  $D = D^{\pi}(S) \subset \dic_{n}$ and
$\pi^{*}S \cap D$ does not belong to $\supp_{n}$, the support   of $\DD$ at level $n$.
\end{itemize}
We make some comments on these conditions.
For  $S \in \sep_{0}' \setminus \sep_{0}$, let $q = \pi^{*}S \cap D$, where  $D = D^{\pi}(S)$.
Also denote by $\varsigma_{n}' = \sigma_{n+1}' \circ \cdots \circ \sigma_{n'}'$  the   blow-ups of $\pi'$ after level $n = h(\Delta)$, where
$\pi'$ is the sequence of blow-ups subjacent to $\Delta'$ and $n' = h(\Delta')$.
Since $S$ is isolated for $\Sigma'$ and $D$ is dicritical for $\Delta'$, we have that
$\varsigma_{n}'$ is non-trivial   over $q$.
We call  $q$  a \emph{escape point} and $S$ a
 \emph{escape separatrix}. 
 A detailed discussion of these objects will be carried out later in
Section \ref{section-escape}. We also refer the reader to Section 5 of \cite{canocorral2006}.

As we did for a dicritical structure $\Delta$,
we can   localize a configuration of separatrices $\Sigma$, framed on $\Delta$, at a point
$p \in \D_{j}$, for some $j=1,\ldots,n$, obtaining a configuration of separatrices $\Sigma_{p}$
framed on the dicritical structure $\Delta_{p}$.
The separatrices at level $0$ for $\Sigma_{p}$ will be precisely the branches in  $\br_{p}(\supp_{j})$.
Branches in $\br_{p}(\sep_{j})$ will
inherit  their classification as dicritical or isolated. On the other hand, the local component
of $D \subset \ndic_{j}$ at $p$ will be a dicritical separatrix of $\Sigma_{p}$ if and only if $\varsigma_{j}^{*}D$ touches
$\varsigma_{j}^{-1}(p)$ at a component of $\dic_{n}$ contained in $\varsigma_{j}^{-1}(p)$.
In particular, if $\varsigma_{j}$ is trivial over $p$, then such a local components will be an isolated separatrix.
The fact that $\Sigma_{p}$ thus defined is framed on $\Delta_{p}$ is easy to check and will
be left to the reader. We will say that $\DD_{p} = (\Delta_{p},\Sigma_{p})$ is the \emph{localization}
of $\DD = (\Delta,\Sigma)$ at the point $p$.

\begin{example}
\label{ex-foliation-separatrices}
{\rm We revisit Example \ref{ex-foliation-structure}.
As we have seen, there is a dicritical structure $\Delta(\F)$ associated with
the minimal reduction of singularities of a germ of singular holomorphic foliation  $\F$  at $(\C^{2},0)$.
Recall that a \emph{separatrix} for $\F$ is an analytic branch at $(\C^{2},0)$ invariant by $\F$.
Denote their set by $\text{Sep}_{0}(\F)$.
As proven in  \cite{camacho1982} (or in \cite{camacho1988}), $\text{Sep}_{0}(\F) \neq \emptyset$.
From this point on, we suppose that $\F$ is of \emph{generalized curve} type (see \cite{camacho1984}),
meaning that there are no saddle-node singularities --- i.e. simple singularities with one zero eigenvalue --- in the final models of its reduction of singularities.
For a generalized curve type foliation, a desingularization for $\text{Sep}_{0}(\F)$ is
a reduction of singularities for $\F$ \cite[Th. 2]{camacho1984}. A separatrix $S \in \text{Sep}_{0}(\F)$ is classified  as dicritical or isolated,
following the classification of the component $D = D^{\pi}(S)$ in the reduction divisor. Denote their sets by, respectively $\Dic_{0}(\F)$ and $\Iso_{0}(\F)$.
Isolated separatrices are finite in number. However, there are infinitely many dicritical separatrices, provided that
there is some dicritical component in $\D_{n}$. In order to define a  configuration of separatrices framed
on $\Delta(\F)$ we proceed as follows:
\begin{itemize}
\item $\sepndic_{0}(\F)= \Iso_{0}(\F)$  is the set of all isolated separatrices;
\item $\sepdic_{0}(\F)$ includes  finitely many separatrices in $\Dic_{0}(\F)$, with at least
one separatrix  attached to each component of valence one in $\dic_{n}$.
\end{itemize}
The configuration of separatrices $\Sigma(\F)$ constructed upon $\sep_{0}(\F) = \sepdic_{0}(\F) \cup \sepndic_{0}(\F)$
is  framed on $\Delta(\F)$. Indeed, (S.1) and (S.3) are true by construction, while
(S.2) follows, for instance, from \cite[Prop. 4]{mol2002}. Finally, (S.4)   is a consequence of
the minimality property of the reduction of singularities $\pi$ along with the above mentioned fact that the desingularization of the separatrices
and of the foliation are equivalent in the  generalized curve type context.
We then have a dicritical duplet by
  $\DD(\F) = (\Delta(\F),\Sigma(\F))$, also denoted by $\DD(\omega) = (\Delta(\omega),\Sigma(\omega))$ when $\F$ is defined by the germ of holomorphic $1$-form $\omega$.
This duplet is evidently non-unique when the reduction of singularities of $\F$ contains a dicritical blow-up.
}\end{example}


\section{Systems of indices and systems of residues}
\label{section-system-indices}

In  the preceding section we introduced the   object  $\DD = (\Delta,\Sigma)$, called
dicritical duplet. It joins together two nested blocks of abstract information related to singular holomorphic foliations
at $(\C^{2},0)$: a dicritical configuration  $\Delta$, which works as    a chart of   dicritical components,  and  a configuration of separatrices $\Sigma$, which is  a list of mandatory  invariant branches.
In this section,
we will introduce other two abstract blocks, systems of indices and systems of residues, that  assemble information corresponding to Camacho-Sad indices and
residues of logarithmic $1$-forms.
These four blocks   altogether will mould the construction of logarithmic foliations.

For the following definition, suppose that
 the underlying sequence of blow-ups of $\Delta$ is $\pi:(\tilde{M},\D) \to (\C^{2},0)$  and that
$n = h(\Delta)$.
Recall that the support of $\DD$ at level $j$ is
$\supp_{j} = \sep_j \cup \ndic_{j}$,
 for $j=0,\ldots,n$.
\begin{ddef}
\label{def-system-indices}
{\rm
 A \emph{system of indices} associated with the dicritical duplet  $\DD = (\Delta,\Sigma)$,
  or \emph{$\DD$-system of indices} for short, is the set of data $\Upsilon$
 obtained by
assigning, for each $j=0,\ldots,n$,    to every $p \in \supp_{j}$ and  each local branch $S \in \br_{p}(\supp_{j})$,  a number $I_{p}(S) \in \C$ in a way that  the following conditions are respected:
\begin{enumerate}[label=(I.\arabic*)]
\item Suppose that $\sigma_{j}$ is a blow-up at $q\in \D_{j-1}$,
for some $1\leq j \leq n$. Let $p \in  \supp_{j-1}$,  $S \in \br_{p}(\supp_{j-1})$, $\tilde{S}= \sigma_{j}^{*}S$ and $\tilde{p} = \D_{j} \cap \tilde{S}$. Denote by $\nu_{p}(S)$ the \emph{order} of $S$ at $p$.
Then  the following \emph{transformation law} is satisfied:
\[
 I_{\tilde{p}}(\tilde{S})  =
 \begin{cases} I_{p}(S) - \nu_{p}(S)^{2} & \text{if}\ \ p = q \\
 I_{p}(S) & \text{if}\ \ p \neq q .
 \end{cases}
\]
\item If $p$ is a regular point of $\supp_{j}$
and $S$ is the only  branch of $\supp_{j}$ at $p$, then $I_{p}(S)= 0$ (\emph{regular point condition}).
\item For $1 \leq j \leq n$  and every  component $D \subset \ndic_{j}$, the \emph{Camacho-Sad formula} is valid:
\[
\sum_{p \in D} I_{p}(D) = D \cdot D,
\]
where in the right side stands the self-intersection number of $D$ in $\tilde{M}_{j}$, which coincides with $c(D)$,
the first Chern class of the normal bundle of $D$ in $\tilde{M}_{j}$.
\item If $p \in \supp_{n}$ is a singular point and $S_{1}, S_{2}$ are the two smooth branches of
$\supp_{n}$ at $p$ (one of them necessarily in $\ndic_{n}$) then we have
the \emph{simple singularity rule}:
\[ I_{p}(S_{1}), I_{p}(S_{2}) \in \C^{*} \setminus \Q_{+}, \ \ \ \text{and} \ \ \
  I_{p}(S_{2})= 1/I_{p}(S_{1}) .
\]
\end{enumerate}
}\end{ddef}

In this situation, we say that $\TT = (\DD,\Upsilon) = (\Delta,\Sigma,\Upsilon)$ is a \emph{dicritical triplet}
at $(\C^{2},0)$.

\begin{example}
\label{ex-foliation-indices}
{\rm We resume Examples \ref{ex-foliation-structure} and \ref{ex-foliation-separatrices}.
Let $\DD(\F) = (\Delta(\F),\Sigma(\F))$ be a dicritical duplet of a germ of singular holomorphic foliation $\F$    at $(\C^{2},0)$ of generalized curve type.
We obtain a system of indices $\Upsilon(\F)$ by taking, for each
$p \in \supp_{j}$ and each $S \in \br_{p}(\supp_{j})$, the complex number
 $I_{p}(S)$ as the \emph{Camacho-Sad index} of $\tilde{\F}_{j} = \pi_{j}^{*} \F$ at $p$
(\cite{camacho1982,linsneto1986,suwa1995} and also \cite{brunella1997,suwa1998}).
 Conditions (I.1) to (I.4) are known properties of this index.
In particular, condition (I.3) follows from the theorem  asserting that the sum of Camacho-Sad indices along a compact analytic curve invariant by a foliation in
an ambient complex surface is the curve's self intersection.
We say that  $\TT(\F) = (\DD(\F),\Upsilon(\F)) = (\Delta(\F),\Sigma(\F),\Upsilon(\F))$ is a dicritical triplet associated with $\F$.
}\end{example}

A   system of indices is determined in a unique way by its prescription  at the final level,
as shown next.
\begin{prop}
Let $\DD = (\Delta,\Sigma)$ be a dicritical duplet at $(\C^{2},0)$. If $n = h(\Delta)$,  then any set of data given by complex numbers $I_{p}(S) \in \C^{*}$, for every
 $p \in \supp_{n}$ and every $S \in \br_{p}(\supp_{n})$, satisfying (I.2), (I.3) and (I.4) of Definition \ref{def-system-indices} for $j=n$,  extends in a unique way to a
 $\DD$-system of indices $\Upsilon$.
\end{prop}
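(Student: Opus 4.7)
The plan is to argue by downward induction on the level $j$, from $j=n$ down to $j=0$. The hypothesis gives us the data at level $j=n$ satisfying (I.2), (I.3), (I.4); I use the transformation law (I.1) in reverse to produce, at each lower level, a forced definition, and then check that the axioms survive.

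For the inductive step, suppose indices are defined at level $j$ satisfying (I.1)--(I.4) (where applicable). Let $q=q_{j-1}$ be the center of $\sigma_j$. For $p\in\supp_{j-1}$ and $S\in\br_p(\supp_{j-1})$, put $\tilde S = \sigma_j^{*}S$ and $\tilde p = \tilde S\cap \D_j$ when $p=q$. Axiom (I.1) leaves no choice: I must set
\[
I_p(S):=
\begin{cases}
 I_{\tilde p}(\tilde S) + \nu_p(S)^{2} & \text{if } p=q,\\
 I_p(\tilde S) & \text{if } p\neq q.
\end{cases}
\]
This establishes uniqueness of the extension. To prove existence, it remains to verify (I.2) and (I.3) at level $j-1$ (axiom (I.4) only concerns level $n$).

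The verification of (I.3) is the core computation and is fairly mechanical. Let $D\subset \ndic_{j-1}$. If $q\notin D$, then $\sigma_j$ is a local biholomorphism near $D$, the points of $D$ are the points of $\tilde D=\sigma_j^{*}D$, the indices match, and self-intersection is preserved, so (I.3) transfers directly from level $j$. If $q\in D$, then $\tilde D\cdot\tilde D = D\cdot D -1$ and there is exactly one new point $\tilde q = \tilde D\cap D_{\rm new}$ to account for, where $D_{\rm new} = \sigma_j^{-1}(q)$. Because $D$ is smooth at $q$, $\nu_q(D)=1$, and my forced definition gives $I_q(D)=I_{\tilde q}(\tilde D)+1$. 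Separating $q$ and $\tilde q$ from the rest,
\[
\sum_{p\in D} I_p(D) \;=\; \Bigl(\sum_{p\in \tilde D\setminus\{\tilde q\}} I_p(\tilde D)\Bigr) + I_{\tilde q}(\tilde D) + 1 \;=\; (D\cdot D -1)+1 \;=\; D\cdot D,
\]
using (I.3) at level $j$. Axiom (I.2) is immediate at points $p\neq q$: there $\sigma_j$ is a local biholomorphism and inductive (I.2) at level $j$ transfers verbatim.

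The delicate point, which I expect to be the main obstacle, is verifying (I.2) at the blow-up center itself, i.e.\ when $p=q$ is a regular point of $\supp_{j-1}$ with only branch $S$. My definition gives $I_q(S)=I_{\tilde p}(\tilde S)+\nu_q(S)^2$, and for this to vanish one must argue that $I_{\tilde p}(\tilde S)= -\nu_q(S)^2$. The key observation is that this configuration does not actually arise in the minimal realization $\pi$ prescribed by axiom (S.4) of Definition~\ref{def-conf-separatrices}: a blow-up whose center is a smooth point of $\supp_{j-1}$ supporting a single local branch is either unnecessary for desingularizing $\sep_0$, unnecessary for realizing $\kappa(\dic_n)$, or it falls into the radial case explicitly excluded after Definition~\ref{def-conf-separatrices}. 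Hence every blow-up center $q_{j-1}$ is a singular point of $\supp_{j-1}$, where (I.2) simply does not apply, and the inductive argument closes. I would make this exclusion precise by a short case analysis following the combinatorics of $\pi$: either $q_{j-1}$ is a corner of the exceptional divisor separating components that propagate to $\dic_n$, or a multiple or crossing point among branches in $\sep_{j-1}\cup\ndic_{j-1}$, both of which are singularities of $\supp_{j-1}$.
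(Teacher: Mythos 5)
Your skeleton --- downward induction, uniqueness forced by (I.1), and the verification of (I.3) via $\tilde D\cdot\tilde D = D\cdot D-1$ together with $I_{q}(D)=I_{\tilde q}(\tilde D)+1$ --- is exactly the paper's. The gap is in your treatment of (I.2): the claim that a blow-up center is never a regular point of the support is false, and this is precisely the case the paper has to work to handle. Recall that $\varepsilon(\sep_0)$ requires the strict transforms to be \emph{transversal to the exceptional divisor}, not merely smooth and pairwise disjoint, and that an isolated separatrix must end up attached to a component of $\ndic_n$ by (S.1). So take $\sep_0=\{S\}$ with $S$ the cusp $y^2=x^3$, declared isolated, and let the first blow-up be dicritical. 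At level $1$ the strict transform of $S$ is smooth and tangent to $D_1\subset\dic_1$ at a point $p$; since $D_1$ is dicritical it is not part of $\supp_1$, so $p$ is a \emph{regular} point of $\supp_1$, and yet $p$ is the center of the next blow-up --- a blow-up that is genuinely required by $\varepsilon(\sep_0)$, so (S.4) is not violated. Your trichotomy (unnecessary for $\varepsilon$, unnecessary for $\kappa$, or radial) misses this configuration, which occurs whenever an isolated separatrix at an intermediate level sits (tangentially) on a dicritical component. At such a point (I.2) does apply, and your forced value $I_p(S)=I_{\tilde p}(\tilde S)+\nu_p(S)^2$ must be shown to vanish; this cannot come out of a one-step induction, because the reciprocity between $I_{\tilde p}(\tilde S)$ and the index of the newly created component is only axiomatized at level $n$ by (I.4).

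The paper closes this by analyzing the whole tower $\varsigma_j$ over $p$ rather than a single blow-up: using (S.2), (S.4) and (D.3) it shows that $\varsigma_j^{-1}(p)$ is a linear chain $D_1\cup\cdots\cup D_r\subset\ndic_n$ with $D_1\cdot D_1=-1$ and $D_i\cdot D_i=-2$ for $i\ge 2$, with $\tilde S=\varsigma_j^{*}S$ attached to $D_1$ and no other separatrix meeting the chain; propagating (I.3) and (I.4) along the chain at level $n$ gives $I_{\tilde p}(\tilde S)=-r$, and the $r$ applications of (I.1) (each adding $\nu^2=1$, $S$ being smooth there) yield $I_p(S)=-r+r=0$. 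A companion argument, using (S.2) and (S.3) to exclude hidden dicritical components over $p$, shows that at every other regular point of $\supp_j$ the tower $\varsigma_j$ is trivial, so the index descends unchanged from level $n$. You need to supply this computation; the appeal to minimality alone does not dispose of the case.
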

\begin{proof}
The transformation law (I.1)  ought to be used in  descending   the indices assigned
at the final level and, thus, it will be automatically  fulfilled.
Besides, since (I.4) concerns only level $n$, it is enough to verify (I.2) and (I.3) for lower levels.
The proof goes by  a reverse induction argument. The initialization, at level $n$, works by hypothesis.
Suppose then that $0 \leq j< n$ and that conditions (I.1) to (I.4) are valid at all levels above $j$.

We first prove that (I.2) is true at level $j$.
Let $p$ be a smooth point of $\supp_{j}$.
If $p \in \ndic_{j}$, then $p$ is a trace point of some component $D \subset \ndic_{j}$ and there are no components of $\sep_{j}$ at $p$.
By the minimality condition  (S.4), $\varsigma_{j}$ would be non-trivial at $p$ if and
only if $\varsigma_{j}^{-1}(p)$ contained some dicritical component, say $D' \subset \dic_{n}$. But
we would have that either $D'$ disconnects $\ndic_{n}$ or  $\val(D') = 1$. In both cases, by (S.2) and (S.3),
we would find some element of $\sep_{j}$ at $p$, which is impossible.
Hence, $\varsigma_{j}$ is a local isomorphism over $p$. Thus, using (I.2) at level $n$ and  successively
applying  (I.1), we conclude that $I_{p}(D) = 0$.

Suppose now that $p \not\in \ndic_{j}$. Then, $p$ is a trace point of a component $D \subset \dic_{j}$
and there is a unique $S \in \sep_{j}$ at $p$, which is smooth. Suppose first that $S \in \sepdic_{j}$. If $\varsigma_{j}$ were non-trivial over $p$, then, by (D.3),  $\varsigma_{j}^{-1}(p)$ would contain some non-dicritical component, which in turn would imply the existence of a least one element of $\sepndic_{j}$ at $p$, which is not allowed.
Thus $\varsigma_{j}$ is trivial over $p$ and, as in the previous paragraph,
we find  that  $I_{p}(S) = 0$.
On the other hand, if $S \in \sepndic_{j}$, then $S$ is tangent to $D$ at $p$, $\varsigma_{j}$ is non-trivial over $p$ and,
as a consequence of (S.2), $\varsigma_{j}^{-1}(p)$   contains only non-dicritical components.
By  (S.4),  over $p$,
  $\varsigma_{j}$ is the minimal sequence of blow-ups that separates $D$ and $S$. Thus,
$\varsigma_{j}^{-1}(p)$ is a linear chain of components $D_{1} \cup \cdots \cup D_{r} \subset \ndic_{n}$,
with $\varsigma_{j}^{*}D \cap D_{1} \neq \emptyset$
and $D_{i} \cap D_{i+1} \neq \emptyset$ for $i=1,\ldots, r-1$. Also,
$\tilde{S} = \varsigma_{j}^{*}S$ touches $D_{1}$ at a point $\tilde{p}$ and no other components
of $\sepndic_{n}$ touches $\varsigma_{j}^{-1}(p)$. We have $D_{1} \cdot D_{1} = -1$ and
$D_{i} \cdot D_{i} = -2$ for $i=2,\ldots,r$. By the application, at level $n$, of (I.3) and (I.4)  along this linear chain,
we find that $I_{\tilde{p}}(\tilde{S}) = -r$. Finally, the successive application of (I.1) gives
that $I_{p}(S)= 0$. This proves (I.2) at level $j$.

In order to prove (I.3) at level $j$,
let $D \subset \ndic_{j}$ be an irreducible component.
Suppose that $\sigma_{j+1}$ is a blow-up at $p \in \D_{j}$.
If $p \not\in D$,
 the Camacho-Sad formula (I.3) will evidently hold for $D$, since it holds for
$\tilde{D} = \sigma_{j+1}^{*}D$ and
 all invariants involved are preserved by  $\sigma_{j+1}$.
Now, in the case $p \in D$, we have, on the one hand,
$\tilde{D} \cdot \tilde{D} = D \cdot D - 1$.
On the other hand,
if $\tilde{p} = \tilde{D} \cap \sigma_{j+1}^{-1}(p)$, then the transformation law (I.1) gives
$I_{\tilde{p}}(\tilde{D}) = I_{p}(D)-1$. The Camacho-Sad formula then holds for $D$, since the
indices of $D$ at points other than $p$ are preserved by $\sigma_{j+1}$. This proves (I.3) at
level $j$, concluding the proof of the proposition.
\end{proof}

Recall that a meromorphic $1$-form $\eta$
--- and also the germ of singular holomorphic foliation $\LL$ that it
defines ---
  is \emph{logarithmic}   if it can be written as
\begin{equation}
\label{eq-log-form}
\eta = \sum_{S \in \sep} \lambda_{S} \frac{d f_{S}}{f_{S}} + \alpha ,
\end{equation}
where $\alpha$ is some closed germ of holomorphic $1$-form,
 $\sep \subset \br_{0}$ is a finite set and, for every $S \in \sep$,
 $f_{S} \in \mathcal{O}_{0}$ is  a local irreducible  equation  for $S$
 and  $\lambda_{S} \in \C^{*}$.
 The branches in $\sep$ are components of the polar set  of the closed meromorphic
 $1$-form $\eta$ and, hence, are
 are separatrices of the foliation $\LL$.

We define a dicritical duplet $\DD(\eta) = (\Delta(\eta),\sigma(\eta))$ intrinsically  associated with the
logarithmic $1$-form $\eta$. The dicritical structure is
$\Delta(\eta) = \Delta(\LL)$, the one defined by the reduction of singularities of the foliation $\LL$, as in Example \ref{ex-foliation-structure}.
As for the configuration of separatrices,
in $\Sigma(\eta)$ we include all branches in $\sep$ and also all isolated separatrices of $\LL$.
For $\Sigma(\eta)$  to be a configuration of separatrices, it remains to assure that  condition (S.3) is true, in other words, that
there is $S \in \sep$ such that $D^{\pi}(S) = D$ for every
component $D \subset \dic_{n}$ with $\val(D) = 1$, where $n = h(\Delta(\eta))$ and
$\pi$ is the sequence of blow-ups beneath  $\Delta(\eta)$.
Nevertheless, this   happens in the cases considered in this text, more specifically, if we assume that $\eta$ is a faithful logarithmic $1$-form (see Definition \ref{def-faithful} and Section \ref{section-escape}).
Thus,   henceforth we can consider that $\DD(\eta) = (\Delta(\eta),\Sigma(\eta))$ is a dicritical duplet.
We obtain a dicritical triplet $\TT(\eta) = (\DD(\eta), \Upsilon(\eta)) = (\Delta(\eta),\sigma(\eta),\Upsilon(\eta))$ by prescribing as indices those provided
by the Camacho-Sad indices of $\LL$.


In an attempt to handle residues of logarithmic $1$-forms,  we add another    element to our construction:

\begin{ddef}
\label{def-system-residues}
{\rm Let $\DD = (\Delta,\Sigma)$ be a dicritical duplet at $(\C^{2},0)$.
A \emph{system of residues} for $\DD = (\Delta,\Sigma)$, or simply
a \emph{$\DD$-system of residues},   is the set of data $\Lambda$
obtained by assigning
numbers $\lambda_{S} \in \C^{*}$  to each irreducible component $S \subset \supp_{j}$ ($S$ is either a
component of $\ndic_{j}$ or a germ of separatrix  of $\sep_{j}$), for $j=0,\ldots,n$,
  obeying  the following rules:
\begin{enumerate}[label=(R.\arabic*)]
\item
Let $S \subset \supp_{j}$ and $\sigma_{j+1}$ be the factor of $\pi$ corresponding to a blow-up at a point
$q \in \supp_{j}$, for some $j<n$. If $\tilde{S} = \sigma_{j+1}^{*}S$, then $\lambda_{\tilde{S}} = \lambda_{S}$
(\emph{invariance under blow-ups}).
\item  If $\sigma_{j+1}$ is a non-dicritical blow-up at $q \in \supp_{j}$, for some $0 \leq j < n$,
and $D = \sigma_{j+1}^{-1}(q)$, then
\[\lambda_{D} = \sum_{S \subset \supp_{j}} \nu_{q}(S) \lambda_{S} \neq 0 .\]
\item  If $\sigma_{j+1}$ is a dicritical blow-up at $q \in \supp_{j}$, for some $0 \leq j < n$, then
\[\sum_{S \subset \supp_{j}} \nu_{q}(S) \lambda_{S} = 0 \ \ \ \text{(\emph{main resonance})}.\]
\end{enumerate}
}\end{ddef}
The sum in items (R.2) and (R.3) has finitely many non-zero terms, only those corresponding to components $S$
containing $q$.
We call it \emph{weighted balance of residues}.
It is  worth pointing out that the assignment of residues at level $0$
determines, by the successive application of rules (R.1) and (R.2),
residues for all components of the support   at all levels. Evidently, rules (R.2) and (R.3)
impose  compatibility conditions on  a set of residues at level 0 that actually engender a $\DD$-system of residues.
Two $\DD$-systems of residues $\Lambda$ and $\Lambda'$
are \emph{equivalent} if  there exists a common ratio $\alpha \in \C^{*}$
for corresponding residues. We denote, in this case, $\Lambda = \alpha \Lambda'$.
If $\Lambda$ is a $\DD$-system of residues, then the principal part of the logarithmic $1$-form shown  in equation \eqref{eq-log-form}
for $\sep = \sep_{0}$ is denoted by $\eta_{\Lambda}$. If $\Lambda$ and $\Lambda'$ are equivalent $\DD$-systems of residues,
with a proportionality ratio $\alpha \in \C^{*}$ as above, then  $\eta_{\Lambda} = \alpha \, \eta_{\Lambda'}$, the two $1$-forms   defining
the same germ of singular holomorphic foliation at $(\C^{2},0)$.

We establish a bond between
 systems of indices and
 systems of residues with the following:
\begin{ddef}
\label{def-consistent}
{\rm  Let $\Upsilon$   and $\Lambda$ be
$\DD$-systems of indices and residues,
where
$\DD = (\Delta,\Sigma)$ is a dicritical duplet at $(\C^{2},0)$ of height  $n = h(\Delta)$.
 We say that $\Upsilon$ and $\Lambda$ are \emph{consistent}
if, for every   $p \in \sing(\supp_{n})$,
\[ I_{p}(S_{1}) = - \frac{\lambda_{S_{2}}}{\lambda_{S_{1}}} \qquad \text{and} \qquad
I_{p}(S_{2}) = - \frac{\lambda_{S_{1}}}{\lambda_{S_{2}}} ,\]
where $S_{1}, S_{2}$ are the two local components
of $\br_{p}(\supp_{n})$.
In this situation, we say that $\QQ = (\Delta,\Sigma,\Upsilon,\Lambda)$ forms a \emph{dicritical quadruplet}.
}\end{ddef}
Note that, by (I.4), the two equalities in the definition are equivalent. Note also that there are no
conditions on the   separatrices in
$\sepdic_{n}$, and this gives some degree of freedom in their assignment of residues.
This remark will be particularly useful in Section \ref{section-escape}.
Another important point   is that, if $\Lambda$ and $\Lambda'$ are equivalent
$\DD$-systems of residues, then $\Lambda$ is consistent with a $\DD$-system of indices $\Upsilon$ if and only if  $\Lambda'$ is.
It is apparent that, given $p \in \D_{j}$, both a system of indices $\Upsilon$ and a system of
residues $\Lambda$ can be localized at $p$, keeping their consistency relation, if it is the case.
Thus, denoting these localizations by $\Upsilon_{p}$ and  $\Lambda_{p}$, we can
consider the localizations $\TT_{p}$ and $\QQ_{p}$ at $p$ of, respectively, a dicritical
triplet $\TT = (\Delta,\Sigma, \Upsilon)$ and
a dicritical
quadruplet $\QQ = (\Delta,\Sigma, \Upsilon,\Lambda)$.

It is always possible to assign residues at  the final level
in a consistent way with a  system of indices. More precisely:
\begin{prop}
\label{prop-residues-leveln}
Suppose that $\TT = (\Delta,\Sigma,\Upsilon)$ is a dicritical triplet at $(\C^{2},0)$. Then,  it is possible to assign
numbers $\lambda_{S}  \in \C^{*}$ for the irreducible components $S \subset \supp_{n}$,
where $n = h(\Delta)$, in such a way that  the consistency condition of    Definition
\ref{def-consistent} is satisfied.
\end{prop}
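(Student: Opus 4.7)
The plan is to set up the consistency constraints at level $n$ as a system indexed by the singular points of $\supp_n$, realize this system on a forest, and then traverse the trees to define the residues.

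First I would describe precisely what the singular points of $\supp_n$ are. Since $\pi$ desingularizes $\sep_{0}$ (condition (S.1)) and two components of $\dic_{n}$ do not meet (condition (D.3)), no singular point of $\supp_{n} = \sep_{n} \cup \ndic_{n}$ involves a dicritical separatrix or two dicritical components; moreover each isolated separatrix is smooth and transversal to $\D_{n}$ at a trace point. Thus $\sing(\supp_{n})$ consists of two kinds of points: corners of $\ndic_{n}$, where two components of $\ndic_{n}$ meet, and trace points of $\ndic_{n}$ at which an isolated separatrix is attached. At each such $p$ with local branches $S_{1},S_{2}$ the rule of Definition \ref{def-consistent} prescribes the ratio $\lambda_{S_{2}}/\lambda_{S_{1}} = -I_{p}(S_{1})$; by (I.4) this ratio lies in $\C^{*}$.

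Next I would consider the intersection graph $\G$ whose vertices are the irreducible components of $\sepndic_{n} \cup \ndic_{n}$ and whose edges are the points of $\sing(\supp_{n})$. The dual graph of the exceptional divisor $\D_{n}$ of any sequence of quadratic blow-ups of $(\C^{2},0)$ is a tree; removing the vertices corresponding to $\dic_{n}$ leaves the decomposition \eqref{eq-connectedcomponent} into disjoint trees $\cl{A}_{n,1}^{\iota},\ldots,\cl{A}_{n,\ell}^{\iota}$. Attaching each isolated separatrix of $\sepndic_{n}$ as a pendant leaf to the unique component of $\ndic_{n}$ that it meets preserves the forest structure. Hence $\G$ is a disjoint union of trees.

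Finally I would construct the residues one tree at a time. On each connected tree $T$ of $\G$, pick a root vertex $C_{0}$, set $\lambda_{C_{0}}$ to any value in $\C^{*}$, and propagate along the unique path from $C_{0}$ to any other vertex $C$: at each edge $p = C' \cap C''$ traversed from $C'$ to $C''$, define $\lambda_{C''} = -I_{p}(C')\lambda_{C'}$. Since $T$ is a tree the assignment is path-independent and therefore well defined, and since each factor $-I_{p}(C')$ lies in $\C^{*}$ by (I.4), every $\lambda_{S}$ obtained is nonzero. Finally, assign to each dicritical separatrix $S \in \sepdic_{n}$ any value in $\C^{*}$; these components meet $\supp_{n}$ only at regular points of $\supp_{n}$ and thus impose no constraint. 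The only possible obstacle to this scheme would be the appearance of a cycle in $\G$, which would force compatibility conditions among the indices around the cycle; the tree structure of the exceptional divisor is precisely what rules this out.
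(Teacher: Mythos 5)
Your proposal is correct and follows essentially the same route as the paper's proof: both exploit that the dual graph of $\ndic_{n}$ (with isolated separatrices attached as pendant vertices) is a forest, fix an arbitrary root with an arbitrary nonzero residue in each tree, and propagate via $\lambda_{S} = -I_{p}(D)\lambda_{D}$ along the unique paths, the tree structure guaranteeing well-definedness. Your explicit remarks on the structure of $\sing(\supp_{n})$ and on the unconstrained dicritical separatrices are details the paper leaves implicit, but the argument is the same.
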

\begin{proof}
We can work separately in each
topologically connected component   defined in
\eqref{eq-connectedcomponent} and we fix some  $\A=\A_{n,k}^{\iota}$.
Our task is
to assign residues to each component of $\supp_{n}$ intersecting $\A$.
To that end, after fixing an arbitrary  component
$D  \subset \A$, define a partial order in
the   components of $\A$  in the following inductive way:
$D$ is the maximal element, its immediate predecessors are the components $D'$  such that
$D \cap D' \neq \emptyset$ and so forth.
This is possible since the dual graph of $\A$ is a tree
(see the paragraph preceding Proposition \ref{prop-rational-index}).
Choose  an arbitrary value $\lambda_{D} \in \C^{*}$.
Then, by the  consistency condition in Definition \ref{def-consistent},
$\lambda_{D}$  determines all residues $\lambda_{S}$ for components $S \subset \supp_{n}$ intersecting
$D$: we must set
$\lambda_{S} = -\lambda_{D} I_{p}(D)$, where $p = S \cap D$.  In particular, we find all residues $\lambda_{D'}$
for  components $D'$ preceding $D$.  Again, for each such $D'$,
all residues of components of $\supp_{n}$ intersecting $D'$ are calculated by the consistency condition.
This clearly provides an inductive process that will end once we reach the minimal elements with respect to this order.
\end{proof}

Proposition \ref{prop-residues-leveln} will provide the initial step, in a reverse induction process,
of the construction of  a $\DD$-system of residues $\Lambda$ consistent with a $\DD$-system of indices $\Upsilon$.
The general step of the induction is the  \emph{amalgamation procedure},    described  in the next section.
For it,
we   need
 the proposition below.
All elements of its proof are in \cite[Prop. 3]{corral2012} and \cite[Lem. 15]{canocorral2006}.
We include a proof here adapted to our setting.

\begin{prop}
\label{prop-residues-indicies}
Let $\QQ= (\Delta,\Sigma,\Upsilon,\Lambda)$ be a dicritical quadruplet at $(\C^{2},0)$.
If $S \in \br_{0}(\sep_{0})$,
then
\[ \lambda_{S} I_{0}(S) = - \sum_{S' \in \br_{0}(\sep_{0}) \setminus S} \lambda_{S'} (S',S)_{0},\]
where $( \, \cdot \, ,  \cdot \, )_{0}$ denotes the
intersection number at $0 \in \C^{2}$.
\end{prop}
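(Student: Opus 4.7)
The plan is to induct on the height $n = h(\Delta)$ by reducing, via the first blow-up $\sigma_{1}$ of $\pi$, the identity at level $0$ to the analogous identity at level $1$ inside the localized quadruplet $\QQ_{\tilde{p}}$, where $\tilde{p} = \tilde{S}\cap D_{1}$, $\tilde{S} = \sigma_{1}^{*}S$ and $D_{1} = \sigma_{1}^{-1}(0)$. Since $\QQ_{\tilde{p}}$ has height at most $n-1$ and $\tilde{S}$ is a branch of $\supp_{1}$ at $\tilde{p}$, the inductive hypothesis yields
\[
\lambda_{\tilde{S}}\,I_{\tilde{p}}(\tilde{S}) \;=\; -\sum_{S'' \in \br_{\tilde{p}}(\supp_{1})\setminus \tilde{S}} \lambda_{S''}\,(S'',\tilde{S})_{\tilde{p}}.
\]
Applying (R.1) in the form $\lambda_{\tilde{S}}=\lambda_{S}$ and (I.1) in the form $I_{\tilde{p}}(\tilde{S}) = I_{0}(S)-\nu_{0}(S)^{2}$, this becomes $\lambda_{S}I_{0}(S) = \lambda_{S}\nu_{0}(S)^{2} - \sum_{S''} \lambda_{S''}(S'',\tilde{S})_{\tilde{p}}$.

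To translate the right-hand side back to level $0$, I would partition $\sep_{0}\setminus S$ into those branches $S'$ whose strict transforms $\tilde{S}'$ pass through $\tilde{p}$ (the separatrices sharing the tangent of $S$ at the origin) and the remainder, and invoke the standard blow-up identity
\[
(S',S)_{0} \;=\; \nu_{0}(S)\,\nu_{0}(S') \,+\, (\tilde{S}',\tilde{S})_{\tilde{p}},
\]
with the convention that the last term vanishes when $\tilde{S}'$ misses $\tilde{p}$. If $\sigma_{1}$ is non-dicritical, then $D_{1} \subset \supp_{1}$ enters the level-$1$ sum with $(D_{1},\tilde{S})_{\tilde{p}} = \nu_{0}(S)$; the balance rule (R.2) $\lambda_{D_{1}} = \sum_{S'' \in \sep_{0}}\nu_{0}(S'')\lambda_{S''}$ combines with $\lambda_{S}\nu_{0}(S)^{2}$ to reconstruct $\nu_{0}(S)\sum_{S'\neq S}\nu_{0}(S')\lambda_{S'}$, which is exactly the product term in the expansion of $(S',S)_{0}$. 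If $\sigma_{1}$ is dicritical, then $D_{1}$ is absent from $\supp_{1}$, and the main resonance (R.3) $\sum_{S''}\nu_{0}(S'')\lambda_{S''} = 0$ absorbs the term $\lambda_{S}\nu_{0}(S)^{2}$ in the same way.

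The base case $n=1$ is a direct verification: minimality (S.4) forces every branch in $\sep_{0}$ to be smooth with pairwise distinct tangents, so $\nu_{0}(S)=1$ and $(S',S)_{0}=1$ for all $S'\neq S$; one then reads off $I_{\tilde{p}}(\tilde{S})$ either from (I.2) (dicritical case, only $\tilde{S}$ at $\tilde{p}$) or from the consistency relation of Definition \ref{def-consistent} (non-dicritical case, $\{\tilde{S},D_{1}\}$ at $\tilde{p}$), applies (I.1), and invokes the appropriate residue rule to conclude. The main obstacle is this careful bookkeeping across the two cases: matching the $D_{1}$-contribution at level $1$, quantified by $(D_{1},\tilde{S})_{\tilde{p}} = \nu_{0}(S)$, with the weighted balance (R.2) in the non-dicritical case, and doing the parallel accounting in the dicritical case where the role of (R.2) is played instead by the main resonance (R.3).
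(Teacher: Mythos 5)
Your proposal is correct and is essentially the paper's own argument: the paper runs a reverse induction on the level $j$, proving the localized identity $\lambda_{S} I_{p}(S) = -\sum_{S'} \lambda_{S'}(S',S)_{p}$ at every $p \in \supp_{j}$ (initialized at level $n$ by (I.2) and the consistency condition), which is exactly your induction on the height of the localized quadruplet at $\tilde{p}$. The computational core — the blow-up identity $(S',S)_{0} = \nu_{0}(S)\nu_{0}(S') + (\tilde{S}',\tilde{S})_{\tilde{p}}$ combined with (R.1), (I.1), and the dichotomy (R.2) versus (R.3) to absorb the term $\lambda_{S}\nu_{0}(S)^{2}$ — is identical, and your bookkeeping in both cases checks out.
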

\begin{proof}
We will use the following induction assertion at level $j$, for $0 \leq j \leq n$:
if $p \in \supp_{j}$ and $S$ is a local component of $\supp_{j}$ at $p$, then
\[ \lambda_{S} I_{p}(S) = - \sum_{S' \in \br_{p}(\supp_{j}) \setminus S} \lambda_{S'} (S',S)_{p}.\]
We are evidently employing here data from the localized quadruplet $\QQ_{p} = (\Delta_{p},\Sigma_{p},\Upsilon_{p},\Lambda_{p})$.

At level $n$, the  assertion is true: it is trivial for each regular point of $\supp_{n}$ and, for singular points, it is the very
definition of consistency. Let $1 \leq j \leq n$ and
suppose that the result is true, as stated, for all points in $\supp_{j}$.
We will prove that it is also true for points at level $j-1$. Suppose that $p \in \supp_{j-1}$. If the blow-up $\sigma_{j}$
is trivial over $p$, there is nothing to prove and,
thus, we are reduced to the case where $\sigma_{j}$ is a blow-up at $p$.

Let us denote $\tilde{S}$ and
$\tilde{S'}$ the strict transforms by $\sigma_{j}$ of the components $S$ and $S'$,  $D = \sigma_{j}^{-1}(p)$
and
$\tilde{p}= \tilde{S} \cap D$. We have

\begin{eqnarray*}
\sum_{S' \in \br_{p}(\supp_{j-1}) \setminus S} \lambda_{S'} (S',S)_{p}  & =  & \sum_{S' \in \br_{p}(\supp_{j-1}) \setminus S} \lambda_{S'}  \left(\nu_{p}(S') \nu_{p}(S)
 + (\tilde{S}',\tilde{S})_{\tilde{p}} \right)  \\
 & = & \nu_{p}(S) \sum_{S' \in \br_{p}(\supp_{j-1}) \setminus S} \lambda_{S'}   \nu_{p}(S')  +
\sum_{S' \in \br_{p}(\supp_{j-1}) \setminus S} \lambda_{S'} (\tilde{S}',\tilde{S})_{\tilde{p}} \\
& = & -\lambda_{S} \nu_{p}(S)^{2} +
\nu_{p}(S) \underbrace{ \sum_{S' \in \br_{p}(\supp_{j-1})} \lambda_{S'}   \nu_{p}(S')}_{\displaystyle(*)}  +
\underbrace{\sum_{S' \in \br_{p}(\supp_{j-1}) \setminus S} \lambda_{\tilde{S}'} (\tilde{S}',\tilde{S})_{\tilde{p}}}_{\displaystyle(**)}
\end{eqnarray*}

If $\sigma_{j}$ is a dicritical blow-up, then the summation $(*)$ vanishes by  (R.3),
whereas $(**)$  can be calculated using the
 the induction assertion at level $j$ for $\tilde{p}$,
yielding,  by finally applying (R.1) and (I.1),
\[
\sum_{S' \in \br_{p}(\supp_{j-1}) \setminus S} \lambda_{S'} (S',S)_{p}   =
  -\lambda_{S} \nu_{p}(S)^{2}  - \lambda_{\tilde{S}} I_{\tilde{p}}(\tilde{S})
= -\lambda_{S}( \nu_{p}(S)^{2} + I_{\tilde{p}}(\tilde{S}) ) = -\lambda_{S} I_{p}(S),
\]
proving thereby the result in this case.

On the other hand, if $\sigma_{j}$ is a non-dicritical blow-up, the summation $(*)$ equals
 $\lambda_{D}$  by (R.2) and, by  applying the induction assertion for $\tilde{p}$
 followed by (R.1) and (I.1), we get
 \begin{eqnarray*}
\sum_{S' \in \br_{p}(\supp_{j-1}) \setminus S} \lambda_{S'} (S',S)_{p}  & =  &
 -\lambda_{S} \nu_{p}(S)^{2} +
\lambda_{D}(D,\tilde{S})_{\tilde{p}}  +
\sum_{S' \in \br_{p}(\supp_{j-1}) \setminus S} \lambda_{\tilde{S}'} (\tilde{S}',\tilde{S})_{\tilde{p}} \\
& = &
 -\lambda_{S} \nu_{p}(S)^{2} - \lambda_{\tilde{S}} I_{\tilde{p}}(\tilde{S})
 = -\lambda_{S}( \nu_{p}(S)^{2} + I_{\tilde{p}}(\tilde{S}) ) = -\lambda_{S} I_{p}(S).
\end{eqnarray*}
This concludes the proof of the proposition.
\end{proof}

\section{Existence of dicritical quadruplets and logarithmic $1$-forms}
\label{section-existence-quadruplet}

We start by recalling a   definition from \cite{canocorral2006} that will be crucial to the development of the results of this section. Let $\eta$ be a germ of logarithmic $1$-form at $(\C^{2},0)$,
 written as
\begin{equation}
\label{eq-log-form1}
 \eta = \lambda_{1} \frac{d f_{1}}{f_{1}} + \cdots +  \lambda_{r} \frac{d f_{r}}{f_{r}} + \alpha,
\end{equation}
where $\alpha$ is a germ of closed holomorphic $1$-form,  $r \geq 2$, and, for $i=1,\ldots,r$,  $f_{i} \in \cl{O}_{0}$ is irreducible with order $\nu_{i} = \nu_{0}(f_{i})$
and
$\lambda_{i} \in \C^{*}$. Denote by $\LL$ the singular holomorphic foliation defined by $\eta$.
\begin{ddef}
\label{def-faithful}
{\rm The $1$-form
$\eta$ is \emph{$1$-faithful} if
$$\nu_{0}( f\eta) = m-1,$$
where $f = f_{1} \cdots f_{r}$ and $m = \nu_{0}(f) = \nu_{1}+ \cdots + \nu_{r}$.
We simply say  that $\eta$ is  \emph{faithful} if its strict transforms are $1$-faithful  along the reduction of singularities of $\LL$.
}\end{ddef}
In the definition, the closed holomorphic $1$-form $\alpha$ in \eqref{eq-log-form1} is inert, since
$\nu_{0}( f\alpha) \geq m$. Also, the definition does not depend on the choice of irreducible equations
$f_{1}, \ldots, f_{r}$ for the components of the polar set of $\eta$. This gives, in particular, the following remark, that will
be used a couple of times in the proof of Theorem \ref{teo-quadruplet} below:
if $\eta$ and $\eta'$ are   germs of logarithmic $1$-forms at $(\C^{2},0)$ differing from each other
by a germ of closed holomorphic $1$-form, then  $\eta$ is faithful
if and only if $\eta'$ is. In Section \ref{section-escape} we make a brief discussion on the
notion of faithfulness. Notably,  the following result, from \cite[Lem. 7]{canocorral2006}, will be
explained: if
 $\lambda_{0} = \sum_{i=1}^{r} \nu_{i}  \lambda_{i}$ is the weighted balance of residues of $\eta$,
then
\begin{itemize}
  \item   $\lambda_{0} \neq 0$ implies that $\eta$ is non-dicritical at $0 \in \C^{2}$ and $1$-faithful;
  \item    $\lambda_{0} = 0$ and $\eta$ $1$-faithful implies that $\eta$ is  dicritical $0 \in \C^{2}$ (\emph{main resonant} case).
\end{itemize}
In other words, for a faithful logarithmic $1$-form, the dicritical or non-dicritical character of
a blow-up in its reduction of singularities is determined by,  respectively, the vanishing or non-vanishing
of the weighted balance of residues.
Another important point is the following: if $\eta$ is a faithful logarithmic $1$-form
and $\pi :(\tilde{M} ,\D) \to (\C^{2},0)$ is
the composition of blow-ups of its  reduction
of singularities  up to  the appearance of escape points,
then  all  non-dicritical   components of $\D$ are poles of $\pi^{*} \eta$, while the dicritical components are neither poles nor contained in the
its set of zeroes.  This can be seen below, in Section \ref{section-escape}, from formula  \eqref{eq-sigma*eta}  and by the discussion following formula  \eqref{eq-tilde-omega}.
Then, the existence of at least one separatrix attached to each dicritical component $D \subset \D$ of valence one, a necessary condition in order that $\DD(\eta)$  be a dicritical duplet, follows from the Residue Theorem applied to the restriction of $\pi^{*} \eta$ to $D$.
Bearing in mind these observations, we make the following  definition:

\begin{ddef}
\label{def-Q-logarithmic}
{\rm  Let  $\QQ =(\Delta,\Sigma,\Upsilon,\Lambda)$ be a dicritical quadruplet at $(\C^{2},0)$.
 We say that a germ of  closed meromorphic $1$-form $\eta$ is \emph{$\QQ$-logarithmic} if:
 \begin{enumerate}
  \item there is a writing
   \[ \eta = \sum_{S \in \sep_{0}} \lambda_{S} \frac{d f_{S}}{f_{S}} + \alpha ,\]
where $\sep_{0}$ is the level $0$ of $\Sigma$, $f_{S} \in \mathcal{O}_{0}$  are local  equations for the components
 $S \in \sep_{0}$,   $\lambda_{S}$ are the residues assigned by $\Lambda$ and
$\alpha$ is a germ of closed holomorphic $1$-form;
\item $\DD(\eta) \geq \DD$, where $\DD(\eta)  = (\Delta(\eta),\Sigma(\eta))$ and $\DD = (\Delta,\Sigma)$;
\item $\eta$ is faithful.
\end{enumerate}
We say that $\eta$ is \emph{strictly $\QQ$-logarithmic} if $\DD(\eta) = \DD$.
}\end{ddef}
Above,  we also say  that $\eta$ is a \emph{logarithmic model} for $\QQ$.
Definition \ref{def-Q-logarithmic} deserves a few comments.
First, the poles of $\eta$ are prescribed by $\sep_{0}$, the level $0$ of the configuration of separatrices $\Sigma$, and their
  residues are those defined by the system of residues $\Lambda$.
Second, the dominance condition $\DD(\eta) \geq \DD$ says, aside from the fact that the reduction of singularities
of the foliation $\LL$ is longer than the underlying sequence of blow-ups of $\Delta$, that the isolated separatrices of $\LL$ that
are not poles of $\eta$ are escape separatrices with respect to $\DD =(\Delta,\Sigma)$.
Finally, the axiomatics defining systems of residues and systems of indices, along with the notion of consistency in
Definition \ref{def-consistent}, gives that the Camacho-Sad indices of $\LL$ with respect to the separatrices in $\sep_{0}$
are precisely those given by  $\Upsilon$.
We should draw attention to the fact that asking $\eta$ to be faithful is stronger than asking that common dicritical and non-dicritical components of $\Delta(\eta)$ and $\Delta$
correspond, which is part of the information given by  condition (2).
Actually, we are demanding that whenever the weighted balance of residues $\lambda_{0}$ vanishes, the corresponding blow-up is dicritical, but,
 additionally, that it is dicritical in a ``1-faithful'' way.

The following theorem is a cornerstone
to all subsequent results of this article.
The remainder of this section is devoted to its proof.
\begin{maintheorem}
\label{teo-quadruplet}
Let  $\TT =(\Delta,\Sigma,\Upsilon)$ be a dicritical triplet. Then there exists:
\begin{enumerate}
  \item a system of
residues $\Lambda$ such that $\QQ = (\TT,\Lambda) = (\Delta,\Sigma,\Upsilon,\Lambda)$ is a dicritical quadruplet;
  \item  a $\QQ$-logarithmic $1$-form $\eta$.
\end{enumerate}
\end{maintheorem}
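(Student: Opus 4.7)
My plan proceeds in two stages: first construct the system of residues $\Lambda$, then exhibit a closed meromorphic $1$-form realizing $\Lambda$ and verify the axioms of Definition \ref{def-Q-logarithmic}.

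For the first stage, I would argue by reverse induction on the levels of $\Delta$. Proposition \ref{prop-residues-leveln} supplies, for each topologically connected component $\cl{A}_{n,k}^{\iota}$ of $\ndic_{n}$, a complete family of residues on the components of $\supp_{n}$ intersecting $\cl{A}_{n,k}^{\iota}$, consistent with $\Upsilon$ and depending on a free scaling parameter $\alpha_{k} \in \C^{*}$; residues on the separatrices in $\sepdic_{n}$ are independent additional free parameters, since no consistency condition is imposed at trace points of $\D_{n}$. The inductive step descends through a blow-up $\sigma_{j+1}$ at $q \in \supp_{j}$: by (R.1), the residue $\lambda_{S}$ of any local branch at $q$ equals $\lambda_{\tilde{S}}$, which is already fixed from the level above, so the only genuine choices live at level $0$. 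Rule (R.2) defines $\lambda_{D}$ at a non-dicritical $\sigma_{j+1}$ as the weighted balance of residues and must be shown to be nonzero, while (R.3) at a dicritical $\sigma_{j+1}$ is a single linear constraint on the $\lambda_{S}$. The entire construction reduces to selecting the $\alpha_{k}$'s and the dicritical-separatrix residues so that every dicritical constraint (R.3) along $\pi$ is met simultaneously and every intermediate $\lambda_{D}$ produced by (R.2) stays in $\C^{*}$. This amalgamation across connected components of $\ndic_{n}$ is the heart of the argument; its feasibility rests on the combinatorial axioms (D.2), (D.3), (S.2) and (S.3), and adapts the strategy of \cite{canocorral2006}.

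For the second stage, once $\Lambda$ is in hand, set
\[
\eta = \sum_{S \in \sep_{0}} \lambda_{S}\,\frac{df_{S}}{f_{S}},
\]
with $f_{S}$ a local irreducible equation for $S$. This $1$-form is closed, meromorphic with polar set $\sep_{0}$, and carries the prescribed residues. To obtain $\DD(\eta) \geq \DD$ one lifts $\eta$ stepwise along $\pi$: by the dichotomy recalled after Definition \ref{def-faithful}, a blow-up in the reduction of a faithful logarithmic $1$-form is dicritical exactly when the weighted balance of residues vanishes, so (R.2) and (R.3) guarantee that $\Delta(\eta)$ agrees with $\Delta$ up to level $n$ and that any further blow-ups of $\pi^{*}\eta$ can only occur at trace points of dicritical components, i.e.\ at escape points. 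The Camacho-Sad indices of the resulting foliation at singularities of $\supp_{n}$ then coincide with $\Upsilon$ by consistency of $(\Upsilon,\Lambda)$ and (I.4).

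The principal obstacle I expect is twofold, and both aspects concern faithfulness. On the algebraic side, in Stage~1 one must verify that the linear system on the free parameters produced by the amalgamation is solvable with every residue nonzero; the connectedness and valence constraints in (S.2) and (S.3), together with the minimality (S.4), are what prevent degeneracies such as a forced $\lambda_{D} = 0$. On the analytic side, in Stage~2 one must check $1$-faithfulness of $\pi^{*}\eta$ level by level --- that is, rule out accidental higher-order cancellation at each dicritical singularity of the lifted $1$-form --- and then conclude that the escape points of $\eta$ live only at trace points of dicritical components. Both verifications adapt the detailed computations of \cite{canocorral2006} to the abstract framework of dicritical quadruplets developed in the previous two sections.
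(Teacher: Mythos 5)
Your outline matches the paper's architecture — reverse induction from level $n$, initialization by Proposition \ref{prop-residues-leveln}, the weighted-balance dichotomy governing dicritical versus non-dicritical blow-ups, and faithfulness checked level by level — but the step you yourself call ``the heart of the argument'' is exactly the step you do not supply, and it is where the actual content of the proof lives. The paper does not solve one global linear system in the scaling parameters $\alpha_{k}$ at the end; it adjusts residues \emph{iteratively} during the descent. At a dicritical blow-up $\sigma_{j}$ with $D=\sigma_{j}^{-1}(p)$ meeting $\supp_{j}$ at $p_{1},\ldots,p_{\ell}$, one sets $\rho_{p_{k}}=\sum\nu_{p}(S)\lambda_{\tilde S}$ over the branches through $p_{k}$, chooses $c_{1},\ldots,c_{\ell}\in\C^{*}$ with $c_{1}\rho_{p_{1}}+\cdots+c_{\ell}\rho_{p_{\ell}}=0$, and multiplies by $c_{k}$ all residues in the entire connected component of $\ndic_{j}$ containing $p_{k}$ (propagating the change to higher levels). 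This preserves consistency with $\Upsilon$ because consistency is a ratio condition, and it is solvable with all $c_{k}\neq 0$ only because of two nontrivial assertions: $\ell\geq 2$ (proved from (S.2), (S.3) and (D.3)) and $\rho_{p_{k}}\neq 0$ for every $k$ (proved by showing that a main resonance at $p_{k}$ would force the germ of $D$ at $p_{k}$ to be an escape separatrix whose lift meets a dicritical component of $\dic_{n}$, contradicting (D.3)). Neither assertion, nor any substitute mechanism, appears in your proposal; ``feasibility rests on the combinatorial axioms'' is a placeholder for the proof, not a proof.

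Two further points. First, your worry that ``every intermediate $\lambda_{D}$ produced by (R.2) stays in $\C^{*}$'' is not a constraint to be arranged by choice of parameters: $\lambda_{D}$ is already assigned (and nonzero by definition) in the partial system at level $j$, and Proposition \ref{prop-residues-indicies} combined with the Camacho-Sad formula (I.3) shows that the weighted balance at a non-dicritical center automatically \emph{equals} $\lambda_{D}$, so (R.2) holds for free. Second, your two-stage separation (finish $\Lambda$, then write $\eta$ at level $0$ and verify everything) obscures why faithfulness holds: the paper carries a local $\QQ^{(j)}_{p}$-logarithmic $1$-form at every point of every level and shows that blow-downs differ from the local models by closed holomorphic $1$-forms (up to the scalings $c_{k}$), which preserves faithfulness; $1$-faithfulness at each dicritical center is then Proposition \ref{prop-faithfull-amalgamation}, whose hypothesis is precisely that some $\rho_{k}\neq 0$. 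Without the rescaling mechanism and the two assertions, your plan does not yet yield either the system of residues or the faithfulness of $\eta$.
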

\begin{proof}
The two assertions above are proved simultaneously. We essentially adapt the arguments in \cite{canocorral2006}
to the combinatorics involved in the definition of a dicritical quadruplet.
The idea is to descend the dicritical structure, starting at the final level $n = h(\Delta)$.
For a fixed level $j$, we will furnish the following objects:
\begin{enumerate}[label=(\roman*)]
\item a \emph{partial system of residues}  $\Lambda^{(j)}$, that is to say,
 residue data for all components of the support for levels $j$ and higher,
 satisfying (R.1), (R.2) and (R.3),
consistent with the  indices of $\Upsilon$ at level $n$.
\item  for each   $p \in \supp_{j}$, a $\QQ^{(j)}_{p}$-logarithmic $1$-form $\eta^{(j)}_{p}$, where
$\QQ^{(j)}_{p} = (\TT_{p},\Lambda^{(j)}_{p})$ is the dicritical quadruplet at $(\tilde{M}_{j},p)$ defined by the localizations
of $\TT$ and of  $\Lambda^{(j)}$ at $p$.
\end{enumerate}

At  level $n$, residues of $\Lambda^{(n)}$  are provided by Proposition  \ref{prop-residues-leveln}.
As for the $\QQ^{(n)}_{p}$-logarithmic $1$-forms,
at a singular point $p \in \supp_{n}$, we take local coordinates $(x,y)$ at $p$ such that
$x=0$ and $y=0$ are equations of the two components of $\supp_{n}$ at $p$, having residues $\lambda_{1}$ and $\lambda_{2}$, and
then we   set $\eta^{(n)}_{p} = \lambda_{1} dx/x + \lambda_{2} dy/y$.
At a regular point $p \in \supp_{n}$, in local coordinates $(x,y)$ at $p$ such that
 the   component  of $\supp_{n}$ at $p$ has equation $x=0$, we set $\eta^{(n)}_{p} = \lambda dx/x$,
 where $\lambda \in \C^{*}$ is the corresponding residue.

Suppose that, for a fixed $j$ with $1 \leq j \leq n$,  residues and logarithmic 1-forms  have been provided as described
in   items (i) and (ii) above. We will amalgamate data from level $j$ in order to construct, at level $j-1$, a
 partial system of residues $\Lambda^{(j-1)}$, as well as a $\QQ^{(j-1)}_{p}$-logarithmic $1$-form
for every $p \in \supp_{j-1}$, where  $\QQ^{(j-1)}_{p} = (\TT_{p},\Lambda^{(j-1)}_{p})$.
Actually, we only need to do this for the point
  $p \in \supp_{j-1}$ that is the center of the blow-up $\sigma_{j}$.
Denote by $D = \sigma_{j}^{-1}(p)$ and let $p_{1},\ldots,p_{\ell}$ be
 the points of intersection of  $D$ with $\overline{\supp_{j} \setminus D}$.
 In our inductive construction, in addition to the partial system of residues $\Lambda^{(j)}$,
 for each $k=1,\ldots,\ell$,   we have
a germ of $\QQ^{(j)}_{p_{k}}$-logarithmic
$1$-form $\eta_{p_{k}}^{(j)}$ at $p_{k}$, where
$\QQ^{(j)}_{p_{k}} = (\TT_{p_{k}},\Lambda^{(j)}_{p_{k}})$.

The non-dicritical and dicritical cases are then treated separately.

 \smallskip \smallskip
\par \noindent
\underline{Case 1}: $\sigma_{j}$ is a non-dicritical blow-up.
The amalgamation of data from level $j$ is somewhat instantaneous here and will be a consequence
of Proposition \ref{prop-residues-indicies}.
Indeed, for each $p_{k} \in D$, we have
\begin{equation}
\label{eq-lambdaD}
 \lambda_{D} I_{p_{k}}(D) = - \sum_{\tilde{S} \in \br_{p_{k}}(\supp_{j}) \setminus D} \lambda_{\tilde{S}} (\tilde{S},D)_{p_{k}},
 \end{equation}
where
residues are those of $\Lambda^{(j)}$.
Applying  the Camacho-Sad formula (I.3) followed by equation \eqref{eq-lambdaD}, we get
\begin{eqnarray}
\label{eq-amalgamation1}
\lambda_{D} &=&   \lambda_{D}\left(- \sum_{k=1}^{\ell}  I_{p_{k}}(D)\right) \ = \   \sum_{k=1}^{\ell}(- \lambda_{D} I_{p_{k}}(D))   \\
\nonumber    &=&
\sum_{k=1}^{\ell}  \left( \sum_{\tilde{S} \in \br_{p_{k}}(\supp_{j}) \setminus D}  \lambda_{\tilde{S}} (\tilde{S},D)_{p_{k}} \right) \\
\nonumber    &=& \sum_{S \in \br_{p}(\supp_{j-1})} \nu_{p}(S) \lambda_{\tilde{S}},
\end{eqnarray}
where $\tilde{S} = \sigma_{j}^{*}S$ is the strict transform  of $S \in \br_{p}(\supp_{j-1})$, which
belongs to some $\br_{p_{k}}(\supp_{j})$.
Equation \eqref{eq-amalgamation1} allows us to
define $\lambda_{S} = \lambda_{\tilde{S}}$, in such a way that condition (R.2) is satisfied by the blow-up
$\sigma_{j}$.
Therefore, we can define residues at level $j-1$ by simply importing then from level $j$:
for every component $S \subset \supp_{j-1}$, set $\lambda_{S} = \lambda_{\tilde{S}}$, where $\tilde{S} = \sigma_{j}^{*}S$.
In this way, we have a partial system of residues $\Lambda^{(j-1)}$.

In our downwards construction, write the
$\QQ^{(j)}_{p_{k}}$-logarithmic
$1$-form  at $p_{k} \in D = \sigma_{j}^{-1}(p)$ as
\[ \eta_{p_{k}}^{(j)} = \lambda_{D} \frac{d f_{D}}{f_{D}} +
\sum_{\tilde{S} \in \br_{p_{k}}(\supp_{j}) \setminus D} \lambda_{\tilde{S}}  \frac{d f_{\tilde{S}}}{ f_{\tilde{S}}} + \alpha_{p_{k}},\]
where
$f_{\tilde{S}} \in \cl{O}_{p_{k}}$ are local equations for $\tilde{S} \in \br_{p_{k}}(\supp_{j}) \setminus D$,
$f_{D} \in \cl{O}_{p_{k}}$ is a local equation for $D$ and $\alpha_{p_{k}}$ is a germ of closed holomorphic $1$-form at $p_{k}$.
For each $S \in \br_{p}(\supp_{j-1})$, take $f_{S} \in \cl{O}_{p}$  an equation of $S$ such
that $f_{S} \circ \sigma_{j} =  f_{D}^{\nu_{p}(S)} f_{\tilde{S}}$, where $\tilde{S} = \sigma_{j}^{*}S$
and $f_{D}$ is the local equation of $D$ at $p_{k} = \tilde{S} \cap D$.
We then define
\[ \eta_{p}^{(j-1)} =
\sum_{S \in \br_{p}(\supp_{j-1})} \lambda_{S} \frac{d f_{S}}{f_{S}}.\]
Setting $\QQ_{p}^{(j-1)} = (\TT_{p}, \Lambda^{(j-1)}_{p})$, then  $\eta_{p}^{(j-1)}$ is $\QQ_{p}^{(j-1)}$-logarithmic.
Indeed, by equation \eqref{eq-amalgamation1},
the residue  of  $ \sigma_{j}^{*} \eta_{p}^{(j-1)} $ with respect to
 $D =   \sigma_{j}^{-1}(p)$
 is
precisely $\lambda_{D} \neq 0$, which  simultaneously implies that $\eta_{p}^{(j-1)}$ is $1$-faithful and
that $D$ is invariant by $ \sigma_{j}^{*} \eta_{p}^{(j-1)}$ \cite[Lem. 7]{canocorral2006}. On the other hand, faithfulness of $\eta_{p}^{(j-1)}$
at  higher levels  is assured by  the inductive construction, since, at each point $p_{k} \in D$,
the logarithmic $1$-forms $\sigma_{j}^{*} \eta_{p}^{(j-1)}$ and $\eta_{p_{k}}^{(j)}$
differ from each other by a germ of
 closed holomorphic $1$-form.

 \smallskip \smallskip
\par \noindent
\underline{Case 2}: $\sigma_{j}$ is a dicritical blow-up.
We  consider again $p_{1},\ldots,p_{\ell}$, which now are precisely
 the  points of $\supp_{j}$ on $D = \sigma_{j}^{-1}(p)$. We start by proving
 the following:
\begin{assertion} $\ell \geq 2$.
\end{assertion}
\begin{proof} Indeed, if $j=n$, so that $D \subset \dic_{n}$, the result is clear:
recalling that, by condition (D.3), $\val(D)$ is actually the number of points of intersection
of $D$ with $\ndic_{n}$, the result
  is obvious if $\val(D) \geq 2$ and, if $\val(D) = 1$, it is a consequence of condition (S.3).
Suppose now that   $1 \leq j < n$ and $D \subset \dic_{j}$.
If $\varsigma_{j}$ is trivial over $D$, then the result follows from the case $j=n$.
When this is not the case, we
remark that, for each point $q  \in D$    over which $\varsigma_{j}$ is non-trivial, we find,
as a consequence of (D.3) and (S.2),
at least one separatrix in $\sepndic_{j}$ at $q$.
Besides, each component of  $ \D_{j}$, other than $D$, intersecting
$D$ at a point $q'$ over which $\varsigma_{j}$ is trivial is necessarily in $\ndic_{j}$ by (D.3).
We are then reduced to the situation where there is only one such  point $q  \in D$ and none of those points
$q' \in D$. But, in this case, denoting $\tilde{D} = \varsigma_{j}^{*}D$, then $\val(\tilde{D}) = 1$   in $\D_{n}$. By (S.3), we must have
a separatrix of $\sepdic_{j}$ touching $D$ at a point other than $q$, proving thereby the assertion.
\end{proof}

Now, at $p_{k} \in D = \sigma_{j}^{-1}(p)$, the given $\QQ_{p_{k}}^{(j)}$-logarithmic $1$-form
is written as
\[ \eta_{p_{k}}^{(j)} =
\sum_{\tilde{S} \in \br_{p_{k}}(\supp_{j})  }  \lambda_{\tilde{S}}  \frac{d f_{\tilde{S}}}{ f_{\tilde{S}}} + \alpha_{p_{k}},\]
where $f_{\tilde{S}} \in \cl{O}_{p_{k}}$ is a local equation for $\tilde{S}$
 and $\alpha_{p_{k}}$ is a germ of closed holomorphic $1$-form at $p_{k}$.

For each $k=1,\ldots,\ell $,
denote by $\br_{p}^{p_{k}}(\supp_{j-1}) \subset \br_{p}(\supp_{j-1}) $ the subset of all branches $S$ such that $\tilde{S}= \sigma_{j}^{*}S \in \br_{p_{k}}(\supp_{j})$.
Also define
\[\rho_{p_{k}} = \sum_{S \in \br_{p}^{p_{k}}(\supp_{j-1}) } \nu_{p}(S) \lambda_{\tilde{S}}.\]
Another important point is the following:
\begin{assertion}
\label{assertion-r2}
$\rho_{p_{k}}  \neq 0$ for every $k=1,\ldots,\ell$.
\end{assertion}
\begin{proof}
Suppose, on the contrary, that $\rho_{p_{k}} =0$ for some $k$.
Consider the germ of logarithmic $1$-form at $p \in \supp_{j-1}$ defined as
\[ \zeta^{(j-1)}_{p} =
\sum_{S \in \br_{p}^{p_{k}}(\supp_{j-1}) }   \lambda_{\tilde{S}}  \frac{d f_{S}}{ f_{S}},\]
where  $f_{S} \in \cl{O}_{p}$ is the   equation of $S$ corresponding to the equation $f_{\tilde{S}} \in \cl{O}_{p_{k}}$ of $\tilde{S} = \sigma_{j}^{*}S$.
Since $\zeta^{(j-1)}_{p}$ has a main resonance at $p$ and all its poles share the same tangent cone, by \cite[Prop. 10]{canocorral2006},
 $\sigma_{j}$ is a non-dicritical blow-up for $\zeta^{(j-1)}_{p}$.
 Denote the germ of  $\sigma_{j}^{*}\zeta^{(j-1)}_{p}$ at $p_{k}$ by  $\zeta^{(j)}_{p_{k}}$.
Since $\rho_{p_{k}} =0$, the logarithmic $1$-forms $\zeta^{(j)}_{p_{k}}$ and  $\eta_{p_{k}}^{(j)}$ differ from each other by the germ of
 closed holomorphic $1$-form $\alpha_{p_{k}}$. Thereafter, $\zeta^{(j)}_{p_{k}}$ is also $\QQ_{p_{k}}^{(j)}$-logarithmic.
We thus have that $D_{p_{k}}$, the germ of $D$ at $p_{k}$, is not a pole for
 $\zeta_{p_{k}}^{(j)}$, although it   is  invariant.
For this to occur, $D_{p_{k}}$ should be a escape separatrix for the logarithmic $1$-form $\zeta^{(j)}_{p_{k}}$
and, as a consequence, $\varsigma_{j}^{*} D_{p_{k}}$ should touch $\varsigma_{j}^{-1}(p_{k}) \subset \D_{n}$ at a dicritical component.
However, this would mean that two  components  in $\dic_{n}$ meet each other, which is not allowed
by (D.3). This  contradiction  proves the assertion.
\end{proof}

In order to amalgamate  data at level $j-1$,
we chose $c_{1},\ldots,c_{\ell} \in \C^{*}$
such that
\begin{equation}
\label{eq-linear}
 c_{1}\rho_{p_{1}} + \cdots + c_{\ell}\rho_{p_{\ell}} = 0 ,
 \end{equation}
which is possible by the two assertions just proven.
Next, we modify the data of $\Lambda^{(j)}$ in the following way: at each $p_{k} \in D$,
for each branch $\tilde{S} \in \br_{p_{k}}(\supp_{j})$ we replace $\lambda_{\tilde{S}}$ by
$c_{k} \lambda_{\tilde{S}}$ and perform the same correction of residues at all points in the connected component of $\ndic_{j}$ containing $p_{k}$.  Once accomplished this adjustment at level $j$, we accordingly modify residues
at higher levels, obtaining new partial system of residues, that we denote by $\tilde{\Lambda}^{(j)}$.
Next, we define a partial system of residues  $\Lambda^{(j-1)}$ by
using information from $\tilde{\Lambda}^{(j)}$ at levels $j$ and higher and also descending
it to level $j-1$:
for a component
$S \subset \supp_{j-1}$ , we take $\lambda_{S}$ as the residue of $\tilde{S}= \sigma_{j}^{*}S$
in $\tilde{\Lambda}^{(j)}$. We clearly have a partial system of residues. In particular,
condition  (R.3) for
$\sigma_{j}$ is a consequence of \eqref{eq-linear}.

As for the $\QQ_{p}^{(j-1)}$-logarithmic $1$-form, where $\QQ_{p}^{(j-1)} = (\TT_{p}, \Lambda^{(j-1)}_{p})$,
in the same way as in the non-dicritical case,
 we define
\[ \eta^{(j-1)}_{p} =
\sum_{S \in \br_{p}(\supp_{j-1}) }   \lambda_{S}  \frac{d f_{S}}{ f_{S}}.\]
The germ of  $\sigma_{j}^{*}\eta^{(j-1)}_{p}$ at $p_{k}$ differs
from $c_{k}\eta^{(j)}_{p_{k}}$ by a germ of closed holomorphic $1$-form.
This implies that $\sigma_{j}^{*}\eta^{(j-1)}_{p}$ is faithful and, thus,
 $\QQ_{p_{k}}^{(j)}$-logarithmic.
On the other hand,
using equation \eqref{eq-linear}, we find that
 $\eta^{(j-1)}_{p}$ is $1$-faithful as a consequence of Proposition
\ref{prop-faithfull-amalgamation}, to be proven below
 in Section \ref{section-escape}.
In particular, it is dicritical for the blow-up $\sigma_{j}$.
We can thus conclude that $\eta^{(j-1)}_{p}$ is faithful, and, therefore, that it is
$\QQ_{p}^{(j-1)}$-logarithmic.
\end{proof}

\begin{example}
\label{ex-foliation-quadruplet}
{\rm In Examples \ref{ex-foliation-structure}, \ref{ex-foliation-separatrices} and \ref{ex-foliation-indices},
we built
$\TT(\F)  = (\Delta(\F),\Sigma(\F),\Upsilon(\F))$, a dicritical triplet   associated with a singular holomorphic foliation $\F$,
of generalized curve type, at $(\C^{2},0)$.
As a consequence of Theorem \ref{teo-quadruplet}, we can complete $\TT(\F)$ into a
dicritical quadruplet
$\QQ(\F) = (\TT(\F),\Lambda(\F)) = (\Delta(\F),\Sigma(\F),\Upsilon(\F),\Lambda(\F))$ and
find a $\QQ(\F)$-logarithmic $1$-form  $\eta_{\F}$. We say that  $\eta_{\F}$ is a logarithmic model for $\F$.
The existence of logarithmic models for germs of singular holomorphic foliations in the plane has already been established in
\cite{corral2003} and \cite{canocorral2006}.
}\end{example}


\section{Complex meromorphic functions}
\label{section-moromorphic-functions}
Let $h$ be a germ of meromorphic function at $(\C^{2},0)$.
Let  $\pi:(\tilde{M},\D) \to (\C^{2},0)$
be the reduction of singularities of the foliation $\HH$ defined by the level curves of $h$.
In particular, $\pi$ raises the
indeterminacy of $h$: there exists a germ of holomorphic map $\tilde{h}: \tilde{M} \to \Pe_{\C}^{1}$
in a neighborhood of $\D$ such that, outside $\D$, $\tilde{h} = h \circ \pi$.
We associate with $h$ the dicritical duplet of
the logarithmic $1$-form $\eta_{h} =  d h / h$: $\DD(h) = \DD(\eta_{h})$, where
$\DD(h) = (\Delta(h),\Sigma(h))$.
Actually, we have to assume that (S.3) is satisfied, as it will be the case below.
The typical fibers of $h$ have irreducible components whose equisingularity types
are determined by its \emph{indeterminacy structure}, i.e. the dicritical components of $\dic_{n}$.
On the other hand,  the isolated separatrices of $\sepndic_{0}$
are irreducible components of non-typical fibers.

\begin{example}
{\rm
Let $h= x^{p}/y^{q}$, with $p,q \in \Z_{+}$ relatively prime.
The combinatorics involved in
$\Delta(h)$ is the same of that of    Euclid's algorithm of the pair $(p,q)$. The sole
dicritical blow-up will be the last one, $\sigma_{n}$.
In other words, the dicritical structure depends on the residues of the
$1$-form $\eta_{h} = dh/h = p dx/x - q dy/y$.
If $p,q > 1$, then
$S_{1}: x=0$ and  $S_{2}: y=0$ are isolated separatrices.
On the other hand, if, for instance, $p> 1$ and $q=1$, then
$S_{1}$ is isolated and $S_{2}$ is dicritical.
}\end{example}

The central result  of this section concerns the construction of meromorphic functions
with prescribed indeterminacy structure, zeros and poles.
The indeterminacy structure of a meromorphic function $h$ can be specified by an
infinitesimal class  $\kappa$ of a union of components of some blow-up divisor, meant to identify the dicritical components of
the reduction of singularities of the foliation $\HH$ given by the levels of $h$.
We can then state:

\begin{maintheorem}
\label{teo-structure-meromporphic}
Let $\kappa$ be an infinitesimal class and $\sep$ be a finite set of branches of analytic curves at $(\C^{2},0)$.
Then there   exits a germ of meromorphic function $h$ at $(\C^{2},0)$ whose indeterminacy structure is given by $\kappa$ and such that
the set of all branches of    $h=0$ and $h=\infty$   contains  $\sep$.
\end{maintheorem}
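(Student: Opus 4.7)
The plan is to reduce Theorem~\ref{teo-structure-meromporphic} to Theorem~\ref{teo-quadruplet} by packaging the prescribed pair $(\kappa,\sep)$ inside a dicritical duplet for which the resulting system of residues can be taken rational; a common scaling will then turn the associated closed logarithmic $1$-form into $dh/h$ for a meromorphic $h$. The first step is to build a dicritical duplet carrying the data. Take $\pi:(\tilde{M},\D)\to(\C^{2},0)$ to be a minimal sequence of blow-ups simultaneously realizing $\kappa$ and the equisingularity class $\varepsilon(\sep)$, and declare dicritical, at each level, the components corresponding to $\kappa$. Split $\sep=\sepdic_0\cup\sepndic_0$ according to whether $D^{\pi}(S)$ is dicritical or not, and enlarge $\sep$ with one extra isolated separatrix attached to every connected component of $\ndic_n$ still empty and with one extra dicritical separatrix on every $D\subset\dic_n$ of valence $1$ missing one. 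Such additions require no further blow-ups, so (S.1)--(S.4) hold and $\DD=(\Delta,\Sigma)$ is a dicritical duplet with $\sep_0\supset\sep$.

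The heart of the argument is the choice of indices. I would prescribe, for every $p\in\sing(\supp_n)$ and every local branch $S\in\br_p(\supp_n)$, a value $I_p(S)\in\Q_{-}$ so that the corner rule (I.4) holds --- automatic in the negative range, since $a<0$ forces $1/a<0$ --- and the Camacho--Sad identity (I.3) is satisfied along every $D\subset\ndic_n$. This is a linear-fractional problem on the dual graph of $\ndic_n$ of precisely the type handled combinatorially in \cite{camacho1988}: one descends the tree of non-dicritical components from the leaves, using the Camacho--Sad identity to propagate negative rational values, with the negativity forced by the local model $p\,dx/x-q\,dy/y$ one aims to recover at every corner. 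Once this prescription at level $n$ is fixed, the uniqueness-of-extension proposition stated right after Definition~\ref{def-system-indices} promotes it to a $\DD$-system of indices $\Upsilon$ and turns $\TT=(\Delta,\Sigma,\Upsilon)$ into a dicritical triplet. I expect this step to be the main obstacle: the combinatorial construction of negative rational solutions on the resolution tree that are consistent with (I.4) is the delicate point.

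Applying Theorem~\ref{teo-quadruplet} to $\TT$ produces a $\DD$-system of residues $\Lambda$ and a $\QQ$-logarithmic $1$-form
\[\eta = \sum_{S\in\sep_0}\lambda_{S}\,\frac{df_{S}}{f_{S}}+\alpha,\]
with $\alpha$ closed holomorphic. I would then verify that every choice in the construction can be made rational: Proposition~\ref{prop-residues-leveln} sets $\lambda_{D}=1$ on a chosen component and $\lambda_{S}=-\lambda_{D}I_{p}(D)$ elsewhere, so rational indices yield rational level-$n$ residues; the non-dicritical amalgamation step merely imports residues, preserving rationality; the dicritical amalgamation step solves a homogeneous linear relation of the form $\sum_{k}c_{k}\rho_{p_{k}}=0$ with $\rho_{p_{k}}\in\Q^{\ast}$, which admits a nonzero rational solution. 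Thus $\Lambda$ may be chosen in $\Q^{\ast}$, and after passing to an equivalent system obtained by clearing denominators (which does not affect $\QQ$-logarithmicity) we may assume $\lambda_{S}\in\Z$ for every $S\in\sep_0$.

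Finally, the closed holomorphic form $\alpha$ on the germ $(\C^{2},0)$ is exact, say $\alpha=dg$, whence
\[\eta=\frac{dh}{h},\qquad h=e^{g}\prod_{S\in\sep_{0}}f_{S}^{\lambda_{S}},\]
is a germ of meromorphic function at $(\C^{2},0)$. Branches with $\lambda_{S}>0$ sit in $\{h=0\}$ and those with $\lambda_{S}<0$ in $\{h=\infty\}$, so every branch of $\sep\subset\sep_0$ lies in the zero--pole locus of $h$. Since $\DD(\eta)\geq\DD$ and the additional blow-ups in the dominance are, by definition, non-dicritical, the dicritical components of the reduction of $h$ coincide with those of $\Delta$ and realize the prescribed class $\kappa$; this is precisely the indeterminacy structure of $h$, completing the proof.
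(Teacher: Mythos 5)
Your overall route is the same as the paper's: realize $\kappa$ and $\varepsilon(\sep)$ by a minimal $\pi$, complete $\sep$ into a configuration of separatrices, produce negative rational indices at the final level, invoke Theorem~\ref{teo-quadruplet} keeping all residues rational, clear denominators, and exponentiate. The construction of the duplet, the rationality bookkeeping through the two amalgamation cases, the observation that the extra blow-ups in $\DD(\eta)\geq\DD$ do not disturb the dicritical components, and the final passage from $\eta$ to $h=e^{g}\prod_{S}f_{S}^{\lambda_{S}}$ (the paper instead absorbs $\alpha$ into one of the equations $f_{S}$ by a unit, which is equivalent) are all sound.

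The one genuine gap is exactly where you suspect it: the existence of a level-$n$ assignment $I_{p}(S)\in\Q_{-}$ satisfying (I.3) and (I.4). Your description --- descend the tree of $\ndic_{n}$ from the leaves, propagating values by the Camacho--Sad identity, ``with the negativity forced by the local model'' --- does not establish negativity. Propagation by (I.3) alone only forces the index at the last remaining singular point of each component to be \emph{rational}; nothing in the local picture prevents it from landing in $\Q_{\geq 0}$, which would violate (I.4) and break the induction. The paper's Proposition~\ref{prop-rational-index} needs two further ingredients: first, the intersection matrix of the dual tree $\gr(\A)$ of each connected component $\A$ of $\ndic_{n}$ is negative definite (by \cite{laufer1971}), and this persists under a small perturbation of the weights $c(D)$; second, one chooses small negative rational indices at all trace singularities except one distinguished point per component, replaces each weight $c(D)$ by $\tilde{c}(D)=c(D)-\sum I_{q}(D)$, and only then applies \cite[Prop.~3.3]{camacho1988} to the perturbed tree to conclude that the remaining index avoids $\R_{\geq 0}$, hence lies in $\Q_{-}$. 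Without this negative-definiteness input your tree descent cannot close, so this step must be supplied in full rather than cited in passing.
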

\begin{proof} Let $\pi:(\tilde{M},\D) \to (\C^{2},0)$ be a sequence of blow-ups that is a minimal realization for the infinitesimal class   $\kappa$ and for the
equisingularity class $\varepsilon(\sep)$.
Build a   dicritical structure $\Delta$ over  $\pi$
with dicritical components determined by $\kappa$.
Next, complete $\sep$ into a finite subset $\sep_{0} \subset \br_{0}$ so that conditions (S.2) and (S.3) are  respected,
taking care   that also (S.4) is satisfied. Then $\sep_{0}$, with the decomposition given by $\Delta$, is the level 0 of a
configuration of separatrices $\Sigma$, and thus we have a dicritical duplet $\DD$ at $(\C^{2},0)$.
We obtain the theorem's statement by proving the following: there
exists a germ of meromorphic function $h$ such that
$\DD(h) \geq \DD$.
This result will follow from Theorem
\ref{teo-quadruplet} once we prove that we can associate  a system of
indices $\Upsilon$ having only negative rational indices at the singular points of the support of $\DD$  at the final level. We prove this next,
in Proposition \ref{prop-rational-index}, and assume it for now. Hence,  applying Theorem \ref{teo-quadruplet}, we can find
a system of residues $\Lambda$ that completes a dicritical quadruplet $\QQ= (\Delta,\Sigma,\Upsilon,\Lambda)$ as well as
a $\QQ$-logarithmic $1$-form $\eta$. An attentive look at the  induction in its proof shows that $\Lambda$ can be obtained
having only rational residues.
Indeed, its initialization  works by the application of
Proposition \ref{prop-residues-leveln}, where residues are obtained in $\Q^{*}$ once we depart from $\lambda_{D} \in \Q^{*}$
for the maximal element $D$ of each connected component $\A = \A_{n,k}^{\iota}$.
It is also easy to see that the amalgamation of data belonging to the general step $j$ can be done in such a
way that all residues are generated  in $\Q^{*}$.
Notably, in the dicritical case, in equation \eqref{eq-linear},
if all coefficients $\rho_{p_{k}}$ are in $\Q^{*}$, then we can get
a solution vector with coordinates in $\Q^{*}$.

With the writing of Definition \ref{def-Q-logarithmic}, after
 replacing some $f_{S}$ by
 $u f_{S}$, where $u \in \OO_{0}$ is a unit such that $ \lambda_{S} du / u =   \alpha$, and after
  multiplying by the
least common multiple of the   denominators  of the residues,
 we can write
\[ \eta = \sum_{S \in \sep_{0}} n_{S} \frac{d f_{S}}{f_{S}},\]
where $ n_{S} \in \Z^{*}$.
By setting
\[ h = \prod_{S \in \sep_{0}} f_{S}^{n_{S}} ,\]
we have that $\eta = \eta_{h} = d h /h$, proving the theorem.
\end{proof}

In order to complete the proof of Theorem \ref{teo-structure-meromporphic}, we  still have
to show the possibility of
assigning   negative rational indices at the final level. For this purpose,
we shall explore the combinatorics dictated by  rules (I.2), (I.3) and (I.4).
Let us then
associate with $\D = \pi^{-1}(0)$  a weighted graph $\gr(\D)$, in which
each irreducible component $D$ corresponds to a vertex, still denoted by $D$, whose weight is
$c(D) = D \cdot D$,   with edges marking intersections
of components. The graph $\gr(\D)$ is a tree whose intersection matrix is negative definite by \cite{laufer1971}.
More generally, if $\A \subset \D$ is a connected union of irreducible components of $\D$,
then $\gr(\A)$ is a subtree with negative definite intersection matrix.
Indeed, up to renumbering the vertices of $\gr(\D)$, the intersection matrix  of $\gr(\A)$ can be seen as a diagonal
block of that of $\gr(\D)$.

We complete the proof of  Theorem \ref{teo-structure-meromporphic} with the following:

\begin{prop}
\label{prop-rational-index}
Let $\DD = (\Delta,\Sigma)$ be a dicritical duplet at $(\C^{2},0)$ of height $n = h(\Delta)$. Then there exists a
system of indices $\Upsilon$ associated with $\DD$ such that $I_{p}(S) \in \Q_{-}$
for every singular point $p \in \supp_{n}$ and  every $S \in \br_{p}(\supp_{n})$.
\end{prop}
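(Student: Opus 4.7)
The plan is to construct the indices at the final level $n$ directly, using the combinatorics of the non-dicritical part of the exceptional divisor, and then invoke the proposition following Definition~\ref{def-system-indices} to propagate them down through all levels. Working one connected component $\A=\A_{n,k}^{\iota}$ of $\ndic_{n}$ at a time, I would produce positive rational weights $m_{D}\in\Q_{+}$ for each non-dicritical component $D\subset\A$ and $m_{S}\in\Q_{+}$ for each isolated separatrix $S$ attached to a component of $\A$, and set
\[
I_{p}(S_{1}) \;=\; -\,\frac{m_{S_{2}}}{m_{S_{1}}}, \qquad I_{p}(S_{2}) \;=\; -\,\frac{m_{S_{1}}}{m_{S_{2}}},
\]
at every singular point $p\in\supp_{n}$ whose two local branches are $S_{1},S_{2}$, and $I_{q}(S)=0$ at regular points. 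Rules (I.2) and (I.4) are then built in, and (I.3) reduces to a single linear identity to be arranged via the weights.

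The key step is the choice of the weights. Let $Q_{\A}$ denote the intersection matrix of the non-dicritical components of $\A$. By Laufer's theorem invoked just before the statement, $Q_{\A}$ is negative definite, so $-Q_{\A}$ is a positive definite $Z$-matrix (positive diagonal $c_{D}=-D\cdot D$ and non-positive off-diagonal entries), i.e.\ an $M$-matrix; since $\A$ is connected, standard $M$-matrix theory gives that $(-Q_{\A})^{-1}$ has \emph{strictly positive} rational entries. I would then assign $m_{S}=1$ (or any positive rationals) to every isolated separatrix attached to $\A$ and form the vector $\vec{b}\in\Q^{\ndic_{n}\cap\A}$ with $\vec{b}_{D}=\sum_{S\text{ at }D}m_{S}$. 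By (S.2), $\A$ carries at least one isolated separatrix, so $\vec{b}\neq 0$ has non-negative entries; thus $\vec{m}:=(-Q_{\A})^{-1}\vec{b}$ has strictly positive rational entries, giving the required $m_{D}\in\Q_{+}$.

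Unpacking the identity $-Q_{\A}\vec{m}=\vec{b}$ yields, for each component $D\subset\A$,
\[
c_{D}\,m_{D} \;=\; \sum_{D'\text{ adj to }D\text{ in }\ndic_{n}} m_{D'} \;+\; \sum_{S\text{ isol. sep at }D} m_{S} \;=\; \sum_{p\in D\cap\sing(\supp_{n})} m_{S_{p}},
\]
where $S_{p}$ is the branch of $\supp_{n}$ at $p$ distinct from $D$ (the remaining intersections of $D$ with the divisor are trace points of $\supp_{n}$, either because they involve only dicritical components or dicritical separatrices, neither in $\supp_{n}$). Dividing by $-m_{D}$ converts this into the Camacho--Sad identity
\[
\sum_{p\in D\cap\sing(\supp_{n})} I_{p}(D) \;=\; -c_{D} \;=\; D\cdot D,
\]
which is exactly (I.3) at level $n$; (I.2) and (I.4) hold by construction, and every $I_{p}(S)=-m_{S'}/m_{S}$ lies in $\Q_{-}$. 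Applying this to every $\A_{n,k}^{\iota}$ produces the required data at level $n$, which the previous proposition extends uniquely to a $\DD$-system of indices $\Upsilon$.

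The only real obstacle is the positivity of $(-Q_{\A})^{-1}$. I would cite it as a standard consequence of $M$-matrix theory, but if a self-contained argument is preferred, one can prove it by a descending induction on the tree $\gr(\A)$: pick a root $D_{0}\in\A$, solve the Camacho--Sad equations from the leaves inward, and verify by induction that at each step the next weight comes out positive rational because the ``outgoing'' residual $c_{D}m_{D}-\sum_{\text{children}}m_{D_{i}}$ can be absorbed into separatrix weights (using the freedom of (S.2) and, if needed, multiplying through by a positive integer to clear denominators). This mirrors the combinatorics exploited in \cite{camacho1988}.
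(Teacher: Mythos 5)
Your proof is correct, but it takes a genuinely different route from the paper's. The paper perturbs the weights of the graph $\gr(\A)$ by small negative rationals (the trace indices $I_{q}(D)$ chosen freely), checks that the perturbed intersection matrix stays negative definite, and then runs a descending induction along the tree rooted at a component carrying an isolated separatrix, invoking \cite[Prop.~3.3]{camacho1988} at each step to conclude that the corner indices land in $\Q_{-}$. You instead solve the Camacho--Sad constraints as one linear system $-Q_{\A}\vec{m}=\vec{b}$ and extract positivity of $\vec{m}$ from the fact that $-Q_{\A}$ is an irreducible Stieltjes (symmetric $M$-) matrix, which is indeed standard and correctly reduces the whole proposition to Laufer's negative-definiteness plus (S.2); the resulting indices $-m_{S_{2}}/m_{S_{1}}$ are automatically negative rationals satisfying (I.4), and your unpacking of the system into (I.3) is right, since the only singular points of $\supp_{n}$ on $D\subset\A$ are corners with adjacent components of $\ndic_{n}$ and attachment points of separatrices in $\sepndic_{n}$. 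Your approach is arguably cleaner and replaces the combinatorial input of \cite{camacho1988} by a linear-algebra citation; it also has the pleasant feature that the weights $m_{S},m_{D}$ are, up to sign conventions, exactly a consistent assignment of residues at level $n$ in the sense of Definition~\ref{def-consistent}, so it meshes naturally with Proposition~\ref{prop-residues-leveln}. Two small remarks: your parenthetical that dicritical separatrices are ``not in $\supp_{n}$'' is inaccurate (they are part of $\supp_{n}=\sep_{n}\cup\ndic_{n}$; the correct point is that by (S.1) they meet only dicritical components, hence never touch $\A$); and your fallback ``self-contained'' induction at the end is vaguer than the $M$-matrix argument and should not be relied on as stated, but it is not needed.
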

\begin{proof}
Since we can work separately with each connected component of $\ndic$,
 we fix some $\A = \A_{n,k}^{\iota} \subset \ndic$ and associate its weighted graph $\gr(\A)$.
Observe that a small perturbation in the weights of $\gr(\A)$  does not change the negative definiteness of its
intersection matrix. More precisely, there is $\epsilon >0$ such that if $\widetilde{\gr}(\A)$ is the weighted graph  obtained from $\gr(\A)$
by replacing each weight  $c(D)$ by some $\tilde{c}(D)$ satisfying
 $|\tilde{c}(D) - c(D)| < \epsilon$, then the intersection matrix of $\widetilde{\gr}(\A)$ is negative definite.

Choose
$S \in \sepndic_{0}$ such
that $\pi^{*}S$ intersects $\A$ at a point   $p \in D_{0} = D^{\pi}(S)$.
This $S$ exists by (S.2).
Let us order the vertices of $\gr(\A)$ in such a way that $D_{0}$ is a maximal element. Following this order,  stratify the vertices of
$\gr(\A)$ in levels $j=1,\ldots,s$, where level  $s$
corresponds to the maximal vertex $D_{0}$, level $s-1$ to its immediate predecessors and so on.
Next,  for each component $D \subset \A$ and for each trace singularity $q \in D$, with $q \neq p$, choose for $I_{q}(D)$ a sufficiently small value in $\Q_{-}$
in such a way that their sum over $D$ satisfies $\left|\sum  I_{q}(D) \right| < \epsilon$.
Define $\tilde{c}(D)= c(D) - \sum  I_{q}(D)$. In this way,
$\widetilde{\gr}(\A)$ has a   negative definite intersection matrix.
The idea now is to  use  \cite[Prop. 3.3]{camacho1988} in a context where the sum of indices over a component $D$ is $\tilde{c}(D)$,
in order to show that all indices, for all corners and for the point $p$, are in $\Q_{-}$.
The argument goes by induction, with the initialization and the general step treated with the same argument.
Let $1 \leq j \leq s$ and suppose that all corners in components at levels lower than $j$ have indices in $\Q_{-}$ (an empty condition,
if $j=1$).
If $D$ is a component at level $j$, denote by $\widetilde{\gr}(\A_{D})$ the subtree of $\widetilde{\gr}(\A)$ formed by $D$ and all its predecessors.
 $\widetilde{\gr}(\A_{D})$ also has a negative definite intersection matrix. If $j<s$,
let $D'$ be the successor of $D$ and denote $p' = D \cap D'$. If $j=s$, set $D'= S$ and $p'=p$.
Denote by $q_{1},\ldots,q_{r}$  the corners of $D$ with its predecessors, if there are any. Then the induction hypothesis  and (I.4) gives that $I_{q_{i}}(D) \in \Q_{-}$
for   $i=1,\ldots,r$.
The sum of indices over $D$ equaling $\tilde{c}(D)$  gives  that $I_{p'}(D) \in \Q$.
Finally, regarding
$\widetilde{\gr}(\A_{D})$ as a combinatorial portrait of an invariant divisor with only one trace singularity at $p'$,
with  sums of indices given by $\tilde{c}$ in place of $c$, an application
of \cite[Prop. 3.3]{camacho1988} gives that  $I_{p'}(D) \not\in  \R_{\geq 0}$. Thus, $I_{p'}(D) \in \Q_{-}$ and also
$I_{p'}(D') \in \Q_{-}$.
The proposition is  proved once
induction reaches level $j=s$.
\end{proof}

\begin{remark}
{\rm  In Theorem
\ref{teo-structure-meromporphic} the set of branches of zeroes and poles of $h$ contains    $\sep$ as a proper subset, in general. As it is clear in the beginning
of its proof, we have  to include in $\sep$ some branches in order to obtain a set  where
conditions (S.2) and (S.3) are valid. The necessity of  condition (S.3) is apparent: if $D \subset \D$ is a dicritical
component of valence one, then the lift $\tilde{h}$ of $h$ to $\tilde{M}$, restricted  $D \simeq \Pe^{1}_{\C}$, must have a zero and a pole.
One of them being given by the component of $\overline{\D \setminus D}$ intersecting $D$, the other must be given by a
separatrix touching $D$.
}\end{remark}


\section{Real logarithmic models}
\label{section-real-models}

\subsection{Symmetric dicritical quadruplets}

Let $J:(x,y) \in \C^{2} \mapsto (\bar{x},\bar{y}) \in \C^{2}$ be the canonical
  anti-holomorphic involution defined by the complex conjugation. The \emph{real trace} of $\C^{2}$, the fixed set
 of $J$, is identified with $\R^{2}$.
Points are  said to be \emph{real} when they belong to the real trace, and \emph{non-real} when they do not.
For $f \in \OO_{0}$, let $f^{\vee} \in \OO_{0}$
be such   that
$f^\vee(x,y) = \xbar{f(J(x,y))}$
for every $(x,y)$ near $0 \in \C^{2}$.
We say that $f \in \OO_{0}$ is \emph{real} if  $f = f^{\vee}$. This is equivalent to $f$ having
a Taylor series expansion at $0 \in \C^{2}$  with only real coefficients or to   $f$ assuming only real values over the real trace.
For a branch $S \in \br_{0}$ having $f \in \OO_{0}$ as equation, we denote by $S^{\vee} \in \br_{0}$
the branch with equation $f^{\vee}$, whose trace is the $J$-image of the trace of $S$.
We say that a branch $S \in \br_{0}$ is \emph{real} if  $S=S^{\vee}$.  This  happens if and only if
 $S$ has a real $f \in \OO_{0}$ as equation.

A germ of holomorphic  $1$-form $\omega = P dx + Q dy$ at $(\C^{2},0)$ is \emph{symmetric} if $P,Q \in \OO_{0}$ are real.
In this case, the germ of holomorphic foliation $\F$ defined by $\omega$, also said to be  symmetric, is invariant by $J$:
if $U$ is a $J$-invariant neighborhood  of $0 \in \C^{2}$ where $\omega$ is realized, and $p \in U$ is a regular point,
denoting by $L_{p}$  the leaf through  $p$, then $J(L_{p})$ is the leaf through $p^{\vee} = J(p)$.
In particular, if $p \in \R^{2}$, then
$L_{p} = J(L_{p})$. Thus, $L_{p}|_{\R^{2}}$ is a local curve in $\R^{2}$ (of topological dimension one), which is
the local leaf of the foliation  $\F_{\R}$ defined by $\omega_{\R}$, the restriction of $\omega$ to the real trace.
Note also that, if $\F$ is symmetric, $S$ is a separatrix at a real point $p$ if and only if $S^{\vee}$ is.

If $\sigma: (\tilde{\C^{2}},D) \to (\C^{2},0)$ is a punctual blow-up, then there is a unique
continuous involution $J':(\tilde{\C^{2}},D) \to (\tilde{\C^{2}},D)$
such that $\sigma(J'(p)) = J(\sigma(p))$ for every $p \in \tilde{\C}^{2}$. 
Its fixed set,  $\tilde{\R}^{2}$, is the \emph{real trace} of $\tilde{\C^{2}}$.
Evidently, the involution $J$ will be lifted by successive blow-ups provided their centers  lie in the real trace.
More generally, we say that a
 sequence $\pi:(\tilde{M},\D) \to (\C^{2},0)$ of quadratic blow-ups is \emph{symmetric}
if the anti-holomorphic involution $J$
has a continuous lift  $\tilde{J}$ to $\tilde{M}$.
Loosely speaking, this happens if, among the individual blow-ups factoring $\pi$,
those with non-real centers are even in number and  paired by conjugation.

A dicritical structure $\Delta$ of height $n = h(\Delta)$ at $(\C^{2},0)$ is \emph{symmetric} if its underlying sequence of
blow-ups is symmetric and, for every component $D \subset \D_{n}$, we have that $D \subset \dic_{n}$ if and only if
$D^{\vee} = \tilde{J}(D)  \subset \dic_{n}$.
A configuration of separatrices $\Sigma$ at $(\C^{2},0)$ is \emph{symmetric} if it is framed on a symmetric dicritical
structure $\Delta$ and, also, $\sep_{0}$ is invariant by $J$. Clearly, this also gives  that
$\sep_{n}$ is invariant by $\tilde{J}$ and, as a consequence,
it holds that $S \in \sepdic_{0}$ if and only if $S^{\vee} \in \sepdic_{0}$.
By (S.4), if $p \in \D_{n}$ is a real point and $S \in \sep_{n}$ is a separatrix through $p$, then $S$ is real.
In particular, if $p \in \supp_{n}$ is a real singular point, then both branches of $\supp_{n}$ at $p$ are real.
We  say that $\DD = (\Delta,\Sigma)$ is a \emph{symmetric dicritical duplet}.

A system of indices $\Upsilon$ associated with a symmetric dicritical duplet $\DD$ is \emph{symmetric} if
 whenever $p \in \supp_{n}$   and   $S \in \br_{p}(\supp_{n})$, then $I_{p^{\vee}}(S^{\vee})=\xbar{I_{p}(S)}$.
In particular, if $p \in \supp_{n}$  is real singular point, then $I_{p}(S) \in \R^{*} \setminus \Q_{+}$ for each branch $S \in \br_{p}(\supp_{n})$.
We say that $\TT =(\Delta,\Sigma,\Upsilon)$ is a \emph{symmetric dicritical triplet}.
Finally, a $\emph{symmetric}$   system of residues $\Lambda$ for the symmetric dicritical duplet $\DD =(\Delta,\Sigma)$ is one for which
 $\lambda_{S^{\vee}} = \xbar{\lambda_{S}}$ whenever $S \in \sep_{0}$.
Thus,  $\lambda_{S} \in \R^{*}$ whenever $S$ is a real separatrix.
The application of (R.1), (R.2) and (R.3) gives that $\lambda_{S^{\vee}} = \xbar{\lambda_{S}}$
for every component $S \subset \supp_{n}$.
If $\Upsilon$ and $\Lambda$ are consistent  symmetric $\DD$-systems of indices and residues, then we say
that  $\QQ = (\Delta,\Sigma,\Upsilon,\Lambda)$ is a \emph{symmetric dicritical quadruplet}.

We say that a logarithmic $1$-form as in  \eqref{eq-log-form} is \emph{symmetric} if its set of poles $\sep \subset \br_{0}$ is symmetric, as well as the corresponding residues and the closed holomorphic $1$-form
$\alpha$.
We can restate Theorem \ref{teo-quadruplet} in  the context of symmetric multiplets.  The very same proof presented in Section \ref{section-existence-quadruplet} works here
and we leave  to the reader
its step-by-step verification.
\begin{teo}
\label{teo-quadruplet-real}
Let  $\TT =(\Delta,\Sigma,\Upsilon)$ be a symmetric dicritical triplet. Then there exists:
\begin{enumerate}
  \item a symmetric system of
residues $\Lambda$ such that $\QQ = (\TT,\Lambda) = (\Delta,\Sigma,\Upsilon,\Lambda)$ is a symmetric dicritical quadruplet;
  \item  a  $\QQ$-logarithmic symmetric $1$-form $\eta$.
\end{enumerate}
\end{teo}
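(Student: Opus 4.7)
The plan is to replicate, step by step, the inductive construction in the proof of Theorem \ref{teo-quadruplet}, with the additional care that every choice is made so as to be invariant under the lifted involution $\tilde{J}$. Since the underlying sequence of blow-ups of $\Delta$ is symmetric, $\tilde{J}$ acts on $\D_j$ and on $\supp_j$ for every $j$, permuting components and singular points either within $\tilde{J}$-orbits of size two or fixing real ones. At every stage I would work on $\tilde{J}$-orbits: on real objects we choose data respecting the real structure, and on non-real orbits we choose data freely on one representative and then define it on the conjugate by complex conjugation.

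For the initialization at level $n$, I would redo the argument of Proposition \ref{prop-residues-leveln} one connected component $\A = \A_{n,k}^{\iota}$ of $\ndic_n$ at a time, but now respecting the action of $\tilde{J}$ on the set of these components. If $\A$ is not fixed by $\tilde{J}$, one builds residues freely on $\A$ and then transports them to $\A^\vee$ by complex conjugation. If $\A = \A^\vee$, one selects the maximal vertex $D$ from which to initialize either as a real component (taking $\lambda_D \in \R^*$) or as one of a conjugate pair (declaring $\lambda_{D^\vee} = \xbar{\lambda_D}$); the consistency prescription together with (I.4) and the symmetry $I_{p^\vee}(S^\vee) = \xbar{I_{p}(S)}$ built into $\Upsilon$ then propagates $\lambda_{S^\vee} = \xbar{\lambda_S}$ inductively along $\A$. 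The local logarithmic $1$-forms $\eta_p^{(n)}$ at singular and regular points are picked in the same spirit, using real local coordinates around real points and conjugate coordinates at non-real pairs, which gives a $\tilde{J}$-invariant family of local models.

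The amalgamation step from level $j$ to level $j-1$ is then adapted. If the center $p$ of $\sigma_j$ is non-real, the descents at $p$ and at $p^\vee$ are related by conjugation and are handled simultaneously; the essential case is when $p$ is real. In the non-dicritical case no further choice is required: equation \eqref{eq-amalgamation1} transfers residues downwards, and the local form $\eta_p^{(j-1)}$ built from symmetric equations $f_S$ (real for real branches of $\supp_{j-1}$ at $p$, conjugate on pairs) is automatically symmetric and preserves $\tilde{J}$-equivariance of $\Lambda^{(j-1)}$.

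The main obstacle, precisely as in the original proof, is the dicritical case, in which one must choose non-zero scalars $c_1,\ldots,c_\ell \in \C^*$ satisfying the resonance relation \eqref{eq-linear}. Here symmetry of $\Lambda^{(j)}$ forces $\rho_{p_k^\vee} = \xbar{\rho_{p_k}}$, so I would index the points $p_1,\ldots,p_\ell$ by $\tilde{J}$-orbits on $D$ and impose the additional constraint $c_{k^\vee} = \xbar{c_k}$, with $c_k \in \R^*$ whenever $p_k$ is real. Under this symmetry the left hand side of \eqref{eq-linear} is automatically real, so \eqref{eq-linear} reduces to a single real linear equation on a real affine space of dimension $\ell - 1 \geq 1$ (using $\ell \geq 2$ from the Assertion in the proof of Theorem \ref{teo-quadruplet}); a generic solution has all coordinates non-zero. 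Using such $c_k$ to renormalize residues in the affected connected components of $\ndic_j$ preserves the symmetry of $\Lambda^{(j)}$, and the remainder of the construction --- the definition of $\Lambda^{(j-1)}$ by pullback, the definition of $\eta_p^{(j-1)}$ using symmetric equations $f_S$, and the faithfulness conclusion via Proposition \ref{prop-faithfull-amalgamation} --- carries over verbatim, since all the relevant identities are algebraic and respect complex conjugation. Completing the reverse induction down to level $0$ yields the symmetric system $\Lambda$ and the symmetric $\QQ$-logarithmic $1$-form $\eta$ that the theorem asserts.
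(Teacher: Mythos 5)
Your proposal is correct and follows exactly the route the paper takes: the paper's proof of Theorem \ref{teo-quadruplet-real} simply asserts that the inductive construction of Theorem \ref{teo-quadruplet} goes through with all choices made equivariantly under the involution, leaving the verification to the reader, and your write-up is precisely that verification (including the one genuinely non-automatic point, namely solving \eqref{eq-linear} within the conjugation-symmetric subspace, which you handle correctly since the symmetric solutions form a real affine space of dimension $\ell-1\geq 1$ on which the non-vanishing conditions are generic).
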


In the next subsection, we will handle   abstract multiplets formed by real objects that
will model the desingularization, separatrices and indices of germs of real analytic vector fields
at $(\R^{2},0)$.
The complexification of these real objects will eventually give rise to symmetric multiplets.
In order to achieve this, we  first  have to consider the following:

\begin{ddef}
\label{def-quasi-dicritical-triplet}
{\rm
 A \emph{quasi  dicritical triplet}  at $(\C^{2},0)$, or \emph{q-dicritical triplet} for short,  is the object
$\TT^{\aleph}    = (\Delta^{\aleph} ,\Sigma^{\aleph} ,\Upsilon^{\aleph} )$ composed by:
\begin{itemize}
\item a dicritical structure $\Delta^{\aleph}$;
\item a configuration of separatrices $\Sigma^{\aleph}$   with conditions (S.2) and (S.3) removed;
\item a system of indices  $\Upsilon^{\aleph}$   with condition (I.3) deleted.
\end{itemize}
}\end{ddef}
The   q-dicritical triplet $\TT^{\aleph}$ is   \emph{symmetric} if all its elements are   invariant by conjugation.
In this case, we say that a symmetric dicritical triplet  $\TT     = (\Delta,\Sigma,\Upsilon )$ \emph{extends}  $\TT^{\aleph}$ if:
\begin{itemize}
\item The  sequence of blow-ups subjacent   to $\TT$  is obtained from that of  $\TT^{\aleph}$ by additional blow-ups at
non-real points; $\Delta$ and $\Delta^{\aleph}$ provide the same classification as dicritical or non-dicritical for the common
  components.
\item If $\sep_{0}^{\aleph}$ and $\sep_{0}$ denote the set of separatrices at level $0$ of $\Sigma^{\aleph}$ and $\Sigma$,  respectively, then
$\sep_{0}^{\aleph} \subset \sep_{0}$ and $\sep_{0} \setminus \sep_{0}^{\aleph}$ contains only non-real separatrices.
\item For separatrices in $\sep_{0}^{\aleph}$, indices assigned by $\Upsilon$ and $\Upsilon^{\aleph}$ are the same.
\end{itemize}
We can always extend a symmetric q-dicritical triplet, as shown in the following result.

\begin{prop}
\label{prop-quasi-triplet}
Let $\TT^{\aleph}    = (\Delta^{\aleph} ,\Sigma^{\aleph} ,\Upsilon^{\aleph} )$ be a   symmetric q-dicritical triplet at $(\C^{2},0)$.
Then there exists a symmetric dicritical triplet  $\TT     = (\Delta,\Sigma,\Upsilon)$ that extends  $\TT^{\aleph}$.
All indices in $\Upsilon$ can be obtained in $\R$.
\end{prop}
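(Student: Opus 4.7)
The plan is to extend $\TT^{\aleph}$ to a symmetric dicritical triplet $\TT$ by symmetrically augmenting $\sep_{0}^{\aleph}$ with conjugate pairs of non-real separatrices and assigning their Camacho--Sad indices so that the three weakened axioms (S.2), (S.3) and (I.3) are repaired. Since the definition of ``extends'' forces new separatrices to be non-real, every addition will proceed in $J$-conjugate pairs; this is always feasible because every component of $\D_{n}$ is a projective line containing infinitely many conjugate pairs of non-real points. The new branches will all be smooth and transversal to the divisor at level $n$, so no additional blow-ups are needed and the minimality clause (S.4) is preserved automatically: each of the original blow-ups of $\pi$ was needed for $\kappa(\dic_{n})$ or for $\varepsilon(\sep_{0}^{\aleph})$, and it remains needed afterwards.

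First I would enforce (S.2) and (S.3). For each connected component $\A \subset \ndic_{n}$ not yet touched by an isolated separatrix of $\Sigma^{\aleph}$, pick a conjugate pair of non-real trace points in $\A \cup \A^{\vee}$ and attach a conjugate pair of smooth transversal branches; their blow-downs furnish two new isolated separatrices in $\sep_{0}$. Valence-one dicritical components are handled in the same way, attaching pairs of dicritical separatrices. Next, for each $D \subset \ndic_{n}$, I compute the Camacho--Sad deficit $\delta_{D} = D \cdot D - \sum_{q} I_{q}(D)$ on the enlarged support. Under the hypothesis that all indices of $\Upsilon^{\aleph}$ are real -- the setting in which this proposition is designed to be applied -- the deficit $\delta_{D}$ is real in both cases $D = D^{\vee}$ and $D \neq D^{\vee}$, since every non-real singularity on $D$ pairs with a conjugate one carrying the conjugate (hence equal) index. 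I would then choose finitely many numbers $\mu_{i} \in \R^{*} \setminus \Q_{+}$ summing to $\delta_{D}$, or to $\delta_{D}/2$ when $D = D^{\vee}$ (because each added conjugate pair of transversal branches at non-real points of $D$ then contributes $2 \mu_{i}$ to the sum over $D$), and attach a corresponding conjugate pair of smooth transversal separatrices at new non-real trace points of $D$ with index $\mu_{i}$ with respect to $D$; when $D \neq D^{\vee}$, I mirror the configuration on $D^{\vee}$ using the same real indices. Condition (I.4) then forces the dual index at each new crossing to be $1/\mu_{i} \in \R^{*} \setminus \Q_{+}$, so both symmetry and realness are preserved.

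The main technical nuisance is arithmetic rather than geometric: if $\delta_{D}$ happens to lie in $\Q_{+}$ (or in $2\Q_{+}$ when $D = D^{\vee}$), no single $\mu \in \R^{*} \setminus \Q_{+}$ meets the requirement, so the deficit must be split across two or more summands in $\R^{*} \setminus \Q_{+}$; this is always possible, for instance by adding and subtracting an irrational. Once the choices are made, (I.1) and (I.2) propagate down from level $n$ via the transformation law, (I.4) is built into every new crossing by the pairing $(\mu_{i}, 1/\mu_{i})$, (I.3) holds by construction of the deficit adjustment, and $J$-symmetry together with (S.1)--(S.4) persist throughout, producing the desired extension $\TT$ with all indices in $\R$.
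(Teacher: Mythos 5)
Your proposal is correct and follows essentially the same route as the paper's proof: keep $\Delta = \Delta^{\aleph}$, repair (S.2), (S.3) and (I.3) by attaching $J$-conjugate pairs of smooth branches transversal to the divisor at non-real trace points of the final level, and kill the Camacho--Sad deficit of each invariant component with real indices, splitting the deficit into two irrational summands when it lies in $\Q_{+}$ (or vanishes but a branch is still required) so that (I.4) is respected, then descending via (I.1). The only differences are cosmetic --- you fix (S.2)/(S.3) before the deficit computation whereas the paper does it afterwards as a degenerate case, and you are slightly more explicit about the factor of $2$ distinguishing real from non-real components.
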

\begin{proof} We take $\Delta = \Delta^{\aleph}$. Our task is to add non-real separatrices to $\Sigma^{\aleph}$ and assign
them indices
in such a way that rules (S.2), (S.3) and (I.3) are observed.
Let $\A = \A_{n,k}^{\iota}$ be a connected component of $\ndic_{n}$, the invariant part of the final level of $\Delta$. Let $D \subset \A$ be a  component.
Define $c^{\aleph}(D) = \sum I_{p}(D)$, where $p$ runs over the singular points of $D$ with respect to $\TT^{\aleph}$, i.e. its corners  and
points where separatrices in $\sep_{n}^{\aleph}$ meet $D$. If $c^{\aleph}(D) = c(D)$, we do nothing.
If
$c(D) - c^{\aleph}(D) \in \R \setminus \Q_{\geq 0}$,
we choose a non-real point $p \in D$, take $S$ any smooth branch at $p$ transverse to $D$, add it to $\sepndic_{n}$,
with index  $I_{p}(S) = 2/(c(D) - c^{\aleph}(D))$ getting, as a consequence,
$I_{p}(D) = (c(D) - c^{\aleph}(D))/2$.  We also include $S^{\vee}$  in $\sepndic_{n}$, with index $I_{p^{\vee}}(S^{\vee}) = I_{p}(S)$.
Now, if $c(D) - c^{\aleph}(D) \in  \Q_{+}$, then we choose
$b_{1},b_{2} \in \R \setminus \Q$ such that $b_{1} + b_{2} = (c(D) - c^{\aleph}(D))/2$
and, also, two distinct, non-conjugate,   non-real points $p_{1}, p_{2} \in D$.
Take $S_{1}$ and $S_{2}$ two smooth branches at, respectively,  $p_{1}$ and $p_{2}$, transverse to $D$,
and add them  to $\sepndic_{n}$,
with indices  $I_{p_{1}}(S_{1}) = 1/b_{1}$ and  $I_{p_{2}}(S_{2}) = 1/b_{2}$.
Then, $I_{p_{1}}(D) + I_{p_{2}}(D) = b_{1} + b_{2}  = (c(D) - c^{\aleph}(D))/2$.
Next, also include
$S_{1}^{\vee}$ and $S_{2}^{\vee}$ to $\sepndic_{n}$
with indices  $I_{p_{1}^{\vee}}(S_{1}^{\vee}) = 1/b_{1}$ and  $I_{p_{2}^{\vee}}(S_{2}^{\vee}) = 1/b_{2}$.

The object
$\TT     = (\Delta,\Sigma,\Upsilon)$ constructed so far satisfies (I.3) for all components $D \in \A$.
It does not comply with (S.2) if $\sep_{n}^{\aleph}$  has no separatrices touching $\A$ and if
$c^{\aleph}(D) = c(D)$ for every $D \subset \A$. If this is so, fix $D \subset \A$, pick two distinct, non-conjugate,  non-real points $p_{1}, p_{2} \in D$, then   choose
$b_{1},b_{2} \in \R \setminus \Q$ such that $b_{1} + b_{2} = 0$
 and repeat the construction that closes the previous paragraph.
The above procedure,    applied to all connected components $\A_{n,k}^{\iota} \subset \ndic_{n}$, results in the validity of (I.3) and (S.2).
To finish the proof, if  for some $D \subset \dic_{n}$ with $\val(D) = 1$ condition (S.3) is not verified by a separatrix of $\sep_{n}^{\aleph}$, we add
to $\sepdic_{n}$ a pair of non-real symmetric separatrices, touching $D$ transversally at pair of different trace points, and give them both  zero as indices, doing the corresponding symmetric intervention in $D^{\vee}$ if $D$ is non-real.
\end{proof}

\subsection{Real analytic 1-forms}
Suppose   that $(x,y)$ are now
coordinates in $\R^{2}$. Let $C_{0}^{\omega}$ denote the ring of germs of
real analytic functions with real values at $(\R^{2},0)$.
Let $\omega_{\R} = P(x,y) dx + Q(x,y)dy$ be a germ of real analytic $1$-form at $(\R^{2},0)$, where
$P,Q \in C_{0}^{\omega}$
are relatively prime  non-units.
Then, $\omega_{\R}$ defines, near $0 \in \R^{2}$, a real analytic foliation $\F_{\R}$ with isolated singularity at the origin.
We denote by $\omega$ the complexification of $\omega_{\R}$, which amounts to regarding $(x,y)$ as coordinates of $\C^{2}$ and
$P,Q$ as elements of $\OO_{0}$. Then, according to the definition in the previous section, $\omega$ is a symmetric $1$-form,
 with an isolated singularity at $0 \in \C^{2}$, defining a singular holomorphic foliation
$\F$   invariant by the involution $J$. Clearly, viewing $\R^{2}$ as the real trace of $\C^{2}$, we have that
 $\F_{\R} = \F|_{\R^{2}}$.

The foliation $\F_{\R}$ has a \emph{reduction of singularities} by a sequence of real quadratic blow-ups, say $\pi_{\R}:(\tilde{M}_{\R},\D_{\R}) \to (\R^{2},0)$.
This means that, for the strict transform  foliation $\tilde{\F}_{\R} = \pi_{\R}^{*}\F_{\R}$, we can obtain the real equivalent of the four items listed in Example \ref{ex-foliation-structure},
with the additional condition that real eigenvalues are associated  with simple singularities.
In order to obtain $\pi_{\R}$, we can consider, for instance,
the minimal reduction of singularities for $\F$, say $\pi:(\tilde{M},\D) \to (\C^{2},0)$,  and
take $\tilde{M}_{\R}$ as the real trace of
$\tilde{M}$, determined by the lift   of $J$, getting also $\D_{\R} = \D \cap \tilde{M}_{\R}$
and   $\pi_{\R} = \pi|_{\tilde{M}_{\R}}$.

The following definition
associates with a  sequence of real quadratic blow-ups  an abstract triplet composed
only by real objects.
\begin{ddef}
\label{def-real-quasi-dicritical-triplet}
{\rm
 A \emph{real quasi  dicritical triplet}, or \emph{real q-dicritical triplet},  framed on a sequence of real quadratic blow-ups $\pi_{\R}$  is the object
$\TT^{\aleph}_{\R}    = (\Delta^{\aleph}_{\R} ,\Sigma^{\aleph}_{\R} ,\Upsilon^{\aleph}_{\R})$ whose components are:
\begin{itemize}
\item $\Delta^{\aleph}_{\R}$ is the structure of real divisors produced by the sequence of blow-ups $\pi_{\R}$;
\item $\Sigma^{\aleph}_{\R}$  is formed by real analytic branches;
\item $\Upsilon^{\aleph}_{\R}$ includes only real
indices.
\end{itemize}
They satisfy the set of axioms that defines  a dicritical triplet, except for  (S.2), (S.3) and (I.3).
}\end{ddef}
Evidently,
the complexification of the constituents of $\TT^{\aleph}_{\R}$ gives readily
a symmetric q-dicritical triplet $\TT^{\aleph}$,   framed on the sequence of
complex quadratic blow-ups  $\pi$, the complexification of $\pi_{\R}$.

Following the definition in \cite{risler2001},  $\F_{\R}$ is of \emph{real generalized curve} type if $\tilde{\F}_{\R}$
has no real algebraic saddle-node singularities, i.e. simple singularities with one zero eigenvalue. This is equivalent to asking that $\tilde{\F}$ has no  saddle-node singularities over the real trace of $\tilde{M}$.
With a foliation of this kind, we can associate  a
 real q-dicritical triplet
 $\TT^{\aleph}_{\R}(\F_{\R})= (\Delta^{\aleph}_{\R}(\F_{\R}),\Sigma^{\aleph}_{\R}(\F_{\R}),\Upsilon^{\aleph}_{\R}(\F_{\R}))$,
including in $\Sigma^{\aleph}_{\R}(\F_{\R})$ all isolated separatrices and a finite number of dicritical separatrices.
It gives rise, by complexification, to a symmetric q-dicritical triplet
 $\TT^{\aleph}(\F_{\R})= (\Delta^{\aleph}(\F_{\R}),\Sigma^{\aleph}(\F_{\R}),\Upsilon^{\aleph}(\F_{\R}))$.

We say that a germ of $1$-form $\eta_{\R}$ at $(\R^{2},0)$ is \emph{real logarithmic} if it has a writing as in \eqref{eq-log-form1}, where each $f_{i} \in  C_{0}^{\omega}$ is irreducible, the residues
$\lambda_{i}$ are real and $\alpha$ is a germ of real analytic $1$-form.
Note that the zero set of some of the $f_{i}$ may degenerate to the origin. When $f_{i} = 0$ is one dimensional, we say that it defines  a \emph{real pole}
 $S_{i}$ of $\eta_{\R}$.
The notion of $1$-faithful logarithmic $1$-form, set in Definition \ref{def-faithful}, extends unequivocally to  the real context.
We then say that a real logarithmic $1$-form is real faithful if it is
$1$-faithful along the reduction of singularities of $\LL_{\R}$,
  the germ of real analytic foliation at $(\R^{2},0)$ defined by $\eta_{\R}$.

Following the same steps of the construction of a dicritical triplet for a complex logarithmic $1$-form, preceding Definition \ref{def-system-residues},
we can build a real q-dicritical triplet
$\TT^{\aleph}_{\R}(\eta_{\R})    = (\Delta^{\aleph}_{\R}(\eta_{\R}) ,\Sigma^{\aleph}_{\R}(\eta_{\R}),\Upsilon^{\aleph}_{\R}(\eta_{\R}))$
for a real logarithmic $\eta_{\R}$.
Briefly, $\Delta^{\aleph}_{\R}(\eta_{\R})$ is based on the real reduction of singularities of $\LL_{\R}$, $\Sigma^{\aleph}_{\R}(\eta_{\R})$ includes
all real  poles of $\eta_{\R}$ along with all isolated separatrices of $\LL_{\R}$, and $\Upsilon^{\aleph}_{\R}(\eta_{\R})$ collects all Camacho-Sad indices
of $\LL_{\R}$.
Next, the notion of dominance for dicritical duplets can be transposed in a straightforward way to real q-dicritical duplets, and, with it,
the concepts of escape point and escape separatrix. Keeping our notation, the writing
$\DD^{\aleph \, '}_{\R}   \geq \DD^{\aleph}_{\R}$
means that $\DD^{\aleph \, '}_{\R} = (\Delta^{\aleph \, '}_{\R} ,\Sigma^{\aleph \, '}_{\R})  $ dominates $\DD^{\aleph}_{\R}  = (\Delta^{\aleph}_{\R} ,\Sigma^{\aleph}_{\R})$.
\begin{ddef}
\label{def-real-log-model}
 {\rm
Let $\TT^{\aleph}_{\R}    = (\Delta^{\aleph}_{\R} ,\Sigma^{\aleph}_{\R} ,\Upsilon^{\aleph}_{\R})$ be a real q-dicritical
triplet at $(\R^{2},0)$.
 A germ of real logarithmic $1$-form
$\eta_{\R}$ is a \emph{real logarithmic model} for  $\TT^{\aleph}_{\R}$ if:
\begin{enumerate}[label=(\arabic*)]
\item the set of real poles of $\eta_{\R}$ is $\sep^{\aleph}_{0}$,  the set of  separatrices of
 $\Sigma^{\aleph}_{\R}$ at level $0$;
\item $\DD^{\aleph}_{\R}(\eta_{\R}) \geq \DD^{\aleph}_{\R}$;
\item $\eta_{\R}$ is real faithful;
\item indices for separatrices in $\sep^{\aleph}_{0}$ are the same, for $\Upsilon^{\aleph}_{\R}$ and for $\Upsilon^{\aleph}_{\R}(\eta_{\R})$.
\end{enumerate}
}\end{ddef}
The real logarithmic model is \emph{strict} if  $\DD^{\aleph}_{\R}(\eta_{\R}) = \DD^{\aleph}_{\R}$.
The following result generalizes, to the dicritical case, the main theorem in \cite{corral2012}:
\begin{maintheorem}
\label{teo-real-logmodel}
Let $\TT^{\aleph}_{\R}    = (\Delta^{\aleph}_{\R} ,\Sigma^{\aleph}_{\R} ,\Upsilon^{\aleph}_{\R})$ be a real q-dicritical
triplet at $(\R^{2},0)$. Then there exists a germ of real logarithmic $1$-form $\eta_{\R}$   that is a
real logarithmic model for $\TT^{\aleph}_{\R}$.
\end{maintheorem}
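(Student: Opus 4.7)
My plan is to reduce the real problem to the complex one established in Theorem~\ref{teo-quadruplet-real} and then restrict to the real trace. Starting from $\TT^{\aleph}_{\R} = (\Delta^{\aleph}_{\R}, \Sigma^{\aleph}_{\R}, \Upsilon^{\aleph}_{\R})$, I would first complexify each entry: the real sequence of blow-ups becomes a (symmetric) complex sequence $\pi$, the branches in $\Sigma^{\aleph}_{\R}$ become branches in $\br_{0}$ invariant under $J$, and the real indices extend to indices satisfying $I_{p^{\vee}}(S^{\vee}) = \overline{I_{p}(S)}$. This produces a symmetric q-dicritical triplet $\TT^{\aleph}$. Next, I would invoke Proposition~\ref{prop-quasi-triplet} to extend $\TT^{\aleph}$ to a symmetric dicritical triplet $\TT = (\Delta, \Sigma, \Upsilon)$. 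The added separatrices are non-real and appear in conjugate pairs, and their indices can all be taken real by that proposition.

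Then I would apply Theorem~\ref{teo-quadruplet-real} to produce a symmetric system of residues $\Lambda$ such that $\QQ = (\Delta, \Sigma, \Upsilon, \Lambda)$ is a symmetric dicritical quadruplet, together with a $\QQ$-logarithmic symmetric $1$-form $\eta$. The crucial refinement I need is to choose the residues so that $\lambda_{S} \in \R^{*}$ for every $S$, not merely $\lambda_{S^{\vee}} = \overline{\lambda_{S}}$. This can be arranged inductively: at the final level, Proposition~\ref{prop-residues-leveln} forces $\lambda_{S} = -\lambda_{D} I_{p}(D)$, so starting from $\lambda_{D} \in \R^{*}$ for each maximal element produces only real residues because all indices in $\Upsilon$ were taken real. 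In the amalgamation procedure for each dicritical blow-up, Assertion~\ref{assertion-r2} gives $\rho_{p_{k}} \in \R^{*}$, and since complex conjugation permutes them in a prescribed way, the linear equation \eqref{eq-linear} admits real coefficients $c_{k}$, which in turn preserves the realness of all residues throughout the descent. Once everything is real, $\eta$ has the form $\sum_{S \in \sep_{0}} \lambda_{S} df_{S}/f_{S} + \alpha$ with $\lambda_{S} \in \R^{*}$ and $\alpha$ a symmetric closed holomorphic $1$-form.

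Finally, I would define $\eta_{\R}$ as the restriction of $\eta$ to the real trace. For a real separatrix $S \in \sep^{\aleph}_{0}$ the term $\lambda_{S} df_{S}/f_{S}$ is already real logarithmic. For each pair of non-real separatrices $(S, S^{\vee})$ added by Proposition~\ref{prop-quasi-triplet}, the combined contribution is $\lambda_{S}(df_{S}/f_{S} + df_{S^{\vee}}/f_{S^{\vee}}) = \lambda_{S}\, d(f_{S} f_{S^{\vee}})/(f_{S} f_{S^{\vee}})$, and $g_{S} = f_{S} f_{S^{\vee}}$ is a real irreducible element of $C^{\omega}_{0}$ whose zero set on $\R^{2}$ degenerates to the origin. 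Together with the restriction of $\alpha$, which is a germ of real analytic $1$-form, this exhibits $\eta_{\R}$ as a real logarithmic $1$-form in the sense of the paper, whose real poles are precisely $\sep^{\aleph}_{0}$. The verification of Definition~\ref{def-real-log-model} is then routine: (1) is built into the construction; (2) follows because the real reduction of singularities of the foliation defined by $\eta_{\R}$ is the trace on $\tilde{M}_{\R}$ of that of the complex $\LL$, so $\DD^{\aleph}_{\R}(\eta_{\R}) \geq \DD^{\aleph}_{\R}$; (3) follows from the faithfulness of the complex $\eta$ transferred to the real trace; and (4) follows from the consistency relation of Definition~\ref{def-consistent}, since Camacho-Sad indices at real singularities are quotients of real residues, hence preserved under restriction. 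The main obstacle is step two, namely producing a system of residues with all values in $\R^{*}$ rather than merely symmetric; without this refinement the non-real pair contributions would include terms of type $d\arctan(v/u)$, which cannot be absorbed into a real logarithmic writing.
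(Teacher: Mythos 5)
Your proposal is correct and follows essentially the same route as the paper: complexify to a symmetric q-dicritical triplet, extend via Proposition~\ref{prop-quasi-triplet} to a symmetric dicritical triplet with real indices, apply Theorem~\ref{teo-quadruplet-real}, and restrict the resulting symmetric $\QQ$-logarithmic $1$-form to the real trace. Your extra care in checking that \emph{all} residues (including those of the non-real conjugate pairs) can be taken in $\R^{*}$, and that each conjugate pair of poles combines into a single real logarithmic term $\lambda_{S}\,d(f_{S}f_{S^{\vee}})/(f_{S}f_{S^{\vee}})$, fills in details the paper leaves implicit but does not change the argument.
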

\begin{proof}
We complexify $\TT^{\aleph}_{\R}$, obtaining
 the symmetric q-dicritical triplet $\TT^{\aleph}$.
 By Proposition \ref{prop-quasi-triplet}, $\TT^{\aleph}$ can be completed into a
symmetric dicritical triplet $\TT$ with real indices.
As a consequence of Theorem \ref{teo-quadruplet-real},
there are a symmetric system of
residues $\Lambda$, with real residues,  such that $\QQ = (\TT,\Lambda)$ is a symmetric dicritical quadruplet,
and  a symmetric $\QQ$-logarithmic   $1$-form $\eta$, whose residues are real. If $\eta_{\R}$ is the restriction of $\eta$ to the real trace of $\C^{2}$,
then $\eta_{\R}$ is a real logarithmic model for $\TT^{\aleph}_{\R}$.
\end{proof}
In Definition \ref{def-real-log-model}, when  $\TT^{\aleph}_{\R}  = \TT^{\aleph}_{\R}(\F_{\R})$ for some germ of singular real analytic  foliation $\F_{\R}$ of real generalized curve type,
we say that  $\eta_{\R}$ is a \emph{real logarithmic model} for $\F_{\R}$. The logarithmic model is \emph{strict} if
$\eta_{\R}$   is a strict
real logarithmic model for some real q-dicritical triplet  $\TT^{\aleph}_{\R}(\F_{\R})$ associated with $\F_{\R}$.
For germs of real analytic foliations, we can assure the existence of strict real logarithmic models:
\begin{maintheorem}
\label{teo-strict-logmodel-real}
Let $\F_{\R}$ be a
germ of singular real analytic  foliation at $(\R^{2},0)$   of real generalized curve type.
Then there exists a strict
real logarithmic model for $\F_{\R}$.
\end{maintheorem}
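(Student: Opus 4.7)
The plan is to combine Theorem \ref{teo-real-logmodel} with the elimination-of-escape-points mechanism alluded to in the introduction (Proposition \ref{prop-withour-real-escape}), which asserts that, in the real setting, real escape points can be removed by enlarging the configuration of separatrices with appropriately chosen real dicritical branches bearing suitable real residues.

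First, I would associate with $\F_{\R}$ its real q-dicritical triplet $\TT^{\aleph}_{\R}(\F_{\R}) = (\Delta^{\aleph}_{\R}(\F_{\R}),\Sigma^{\aleph}_{\R}(\F_{\R}),\Upsilon^{\aleph}_{\R}(\F_{\R}))$, as described just before Definition \ref{def-real-log-model}. Because $\F_{\R}$ is of real generalized curve type, by the real analogue of \cite{camacho1984} (cf. \cite{risler2001}) the real reduction of singularities of $\F_{\R}$ coincides with the desingularization of its real separatrices. In particular, the sequence of real quadratic blow-ups underlying $\Delta^{\aleph}_{\R}(\F_{\R})$ is, up to the given isolated and chosen dicritical real separatrices, minimal for the actual reduction of singularities of $\F_{\R}$.

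Applying Theorem \ref{teo-real-logmodel} to $\TT^{\aleph}_{\R}(\F_{\R})$ produces a real logarithmic model $\eta_{\R}$ satisfying the dominance $\DD^{\aleph}_{\R}(\eta_{\R}) \geq \DD^{\aleph}_{\R}(\F_{\R})$, the discrepancy being supported on finitely many real escape points placed on smooth parts of real dicritical components and the corresponding real escape separatrices. Now, at each real escape point $q$ lying on a real dicritical component $D$, invoke Proposition \ref{prop-withour-real-escape}: enlarge the configuration by appending a real dicritical separatrix of $\F_{\R}$ whose strict transform by $\pi_{\R}$ meets $D$ precisely at $q$, and assign to it a real residue calibrated so that the weighted balance of residues at $q$ no longer vanishes in a manner compelling further blow-ups. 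Such a real dicritical separatrix exists since $D \subset \dic_{n}$ carries infinitely many real separatrices of $\F_{\R}$, and the availability of a free real parameter for its residue is exactly what distinguishes the real case from the rigid complex situation.

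After this enrichment I would obtain an augmented real q-dicritical triplet $\tilde{\TT}^{\aleph}_{\R}(\F_{\R})$ which still represents $\F_{\R}$ (the added branches are genuine separatrices of $\F_{\R}$, and their Camacho-Sad indices are forced to be zero at the trace points where they are attached, so the system of indices is unchanged). A second application of Theorem \ref{teo-real-logmodel} to $\tilde{\TT}^{\aleph}_{\R}(\F_{\R})$ then yields a new real logarithmic $1$-form $\tilde{\eta}_{\R}$ with $\DD^{\aleph}_{\R}(\tilde{\eta}_{\R}) = \DD^{\aleph}_{\R}(\F_{\R})$, that is, a strict real logarithmic model for $\F_{\R}$.

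The main obstacle is the precise deployment of Proposition \ref{prop-withour-real-escape}: one has to check that adding real dicritical separatrices to cancel escape points at one stage does not produce fresh escape points at other stages, and that the chosen real residues remain consistent with the system of residues delivered by Theorem \ref{teo-quadruplet-real} along the whole tower of blow-ups. This forces a careful propagation argument, tracking how the weighted balance of residues (R.2)--(R.3) is perturbed at each relevant corner of $\D_{n}$. The key reason this works in the real setting, but not in the complex one, is that each real residue can be freely chosen in $\R^{*}$, giving enough flexibility to solve simultaneously the finitely many non-resonance conditions required to kill all real escape points without breaking consistency with the prescribed real indices.
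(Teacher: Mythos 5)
Your high-level skeleton (Theorem \ref{teo-real-logmodel} followed by an elimination of real escape points through added real dicritical separatrices) is the paper's skeleton, but the elimination mechanism you describe is not the one that works, and it fails in two concrete ways. You attach a single new separatrix \emph{at} each real escape point $q$ and ``calibrate its residue so that the weighted balance at $q$ no longer vanishes.'' The weighted balance (R.2)--(R.3) is not computed at $q$ but at the center of the dicritical blow-up that creates the component $D$ carrying $q$; a single new branch through that center with residue $\lambda'\neq 0$ changes that balance by $\nu\lambda'\neq 0$, and by the dichotomy recalled after Definition \ref{def-faithful} this turns the dicritical blow-up into a non-dicritical one, destroying the very dicritical structure you must preserve. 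This is why the paper only permits \emph{balanced} modifications ($\sum_i\lambda_i'=0$) and why Proposition \ref{prof-no-real-roots} needs the closing parity argument (real roots of odd local order come in pairs) to balance the total modification. Moreover, even locally, a pole placed exactly at $q$ does not remove the real tangency in general: if $q$ is a zero of order $n$ of the escape function $R_{\eta}$, the perturbed numerator behaves like $(t-q)^{n+1}g(t)+\lambda' Q_{\eta}(q)$ near $q$, which has a real root for \emph{every} small real $\lambda'$ when $n$ is even. Proposition \ref{prof-no-real-roots} instead inserts poles at real points \emph{near but distinct from} the real zeros of $R_{\eta}$, with small residues of a prescribed sign, and uses Rouch\'e's theorem together with an explicit lower bound to push all perturbed zeros off the real axis.

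The second gap is your closing step. Re-applying Theorem \ref{teo-real-logmodel} to the enlarged triplet cannot deliver strictness: that theorem only yields $\DD^{\aleph}_{\R}(\tilde{\eta}_{\R})\geq \tilde{\DD}^{\aleph}_{\R}$, and the residues --- hence the escape functions, hence the escape points --- are an \emph{output} of its proof, not data you may prescribe in a q-dicritical triplet (which carries indices, not residues). Your ``calibrated'' residues would not survive the second run, and nothing prevents it from producing fresh real escape points. The paper never re-runs the existence theorem: it modifies the already constructed symmetric $1$-form directly, $\eta\mapsto\eta^{\textsc{t}}$, with Proposition \ref{prop-log-modification} guaranteeing that $\eta^{\textsc{t}}$ remains $\QQ^{\textsc{t}}$-logarithmic and the remark at the end of the subsection on logarithmic modifications guaranteeing that a modification at one dicritical center does not alter the escape functions at the others; choosing the new branches among actual real dicritical separatrices of $\F_{\R}$ keeps the modified triplet a q-dicritical triplet of $\F_{\R}$, so the decomplexification $\eta^{\textsc{t}}_{\R}$ is the desired strict model.
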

\begin{proof} The existence of a logarithmic model is a consequence of Theorem \ref{teo-real-logmodel}.
The possibility of obtaining a
strict real logarithmic model is justified in the next section, by Proposition \ref{prop-withour-real-escape}
and    the comments following it.
\end{proof}

As we did for complex sequences of blow-ups in Section \ref{section-dicritical-structure}, we can
define \emph{real infinitesimal classes} for sequences of real quadratic blow-ups. In this case, they are denoted by
$\kappa_{\R}$.
We can then apply the machinery of this section   to the construction of real meromorphic functions, yielding the following real version of
Theorem \ref{teo-structure-meromporphic}:

\begin{maintheorem}
\label{teo-structure-meromporphic-real}
Let $\kappa_{\R}$ be a real infinitesimal class and $\sep$ be a finite set of branches of real analytic curves at $(\R^{2},0)$.
Then there   exits a germ of real meromorphic function $h$ at $(\R^{2},0)$ whose indeterminacy structure is given by $\kappa_{\R}$ and such that
the set of all branches of    $h=0$ and $h=\infty$   equals  $\sep$.
\end{maintheorem}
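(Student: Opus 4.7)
The plan is to transcribe the proof of Theorem~\ref{teo-structure-meromporphic} into the real setting, exploiting the fact that, by Proposition~\ref{prop-quasi-triplet}, all the auxiliary data needed to upgrade a symmetric q-dicritical triplet into a genuine symmetric dicritical triplet can be taken in non-real pairs, and hence contributes no extra \emph{real} poles or zeroes to the eventual logarithmic $1$-form. This is exactly what makes an equality (rather than an inclusion) of branch sets plausible here.

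First I would take a sequence of real quadratic blow-ups $\pi_\R:(\tilde M_\R,\D_\R)\to(\R^2,0)$ that is a minimal joint realization of $\kappa_\R$ and of the equisingularity class $\varepsilon(\sep)$, and use $\kappa_\R$ to mark the dicritical components, producing a real dicritical structure $\Delta_\R^\aleph$. Decomposing $\sep=\sep^\delta\cup\sep^\iota$ according to whether the strict transform hits a dicritical or a non-dicritical component, I obtain a real configuration of separatrices $\Sigma_\R^\aleph$ with $\sep_0^\aleph=\sep$, with the axioms (S.2) and (S.3) deliberately left unverified: this is the whole point of working with a \emph{q}-dicritical triplet. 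To this I would attach a system of real indices $\Upsilon_\R^\aleph$ assigning rational negative values at every real corner and every real point where a branch of $\sep_n$ meets $\ndic_n$, by the same combinatorial argument as in Proposition~\ref{prop-rational-index}: work tree by tree on each connected component of $\ndic_n$, assign small rational negative indices at the non-distinguished singular points, and invoke \cite[Prop.~3.3]{camacho1988} together with the negative-definiteness of the (perturbed) intersection matrix of the dual graph to deduce rational negative indices at the remaining points.

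Next I would complexify $\TT_\R^\aleph$ into a symmetric q-dicritical triplet $\TT^\aleph$ and apply Proposition~\ref{prop-quasi-triplet} to extend it to a symmetric dicritical triplet $\TT=(\Delta,\Sigma,\Upsilon)$; crucially, all additional branches and blow-ups introduced there are non-real and paired by conjugation, and the added indices can still be chosen in $\Q$ (the construction in the proof of Proposition~\ref{prop-quasi-triplet} freely chooses real numbers, and those can be picked in $\Q_-$). Theorem~\ref{teo-quadruplet-real} then furnishes a symmetric $\QQ$-logarithmic $1$-form $\eta$, and an inspection of its inductive construction in Section~\ref{section-existence-quadruplet}, exactly as used at the end of the proof of Theorem~\ref{teo-structure-meromporphic}, shows that the symmetric system of residues $\Lambda$ can be produced with values in $\Q^*$: the initialization via Proposition~\ref{prop-residues-leveln} starts with rational choices and the amalgamation at each dicritical blow-up reduces to solving equation~\eqref{eq-linear} with rational coefficients $\rho_{p_k}$, which admits a rational non-zero solution. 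Now invoke Proposition~\ref{prop-withour-real-escape} to eliminate any \emph{real} escape points by adding further pairs of (possibly non-real) dicritical separatrices with appropriately chosen rational residues; after this adjustment the set of real poles of $\eta_\R:=\eta|_{\R^2}$ is precisely $\sep$.

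Finally, with the writing
\[
\eta=\sum_{S\in\sep_0}\lambda_S\frac{df_S}{f_S}+\alpha,
\]
after absorbing the closed holomorphic part into the unit normalizations of each $f_S$ and clearing the common denominator of the $\lambda_S$, I obtain integers $n_S\in\Z^*$ with $\eta=\sum_{S\in\sep_0}n_S\,df_S/f_S=dh/h$ for
\[
h=\prod_{S\in\sep_0}f_S^{n_S}.
\]
Since $\lambda$ and $\alpha$ are symmetric under conjugation, $h$ is real, and by construction its indeterminacy structure is $\kappa_\R$ while its set of real zero/pole branches is exactly $\sep$. The main obstacle in the argument is the last bookkeeping step: verifying that the rational-residue logarithmic model can be arranged so that \emph{every} non-real pole cancels in conjugate pairs that do not contribute real branches, and that Proposition~\ref{prop-withour-real-escape} really lets us push all remaining escape phenomena off the real trace without spoiling rationality of residues or the prescribed indeterminacy structure.
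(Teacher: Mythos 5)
Your overall strategy --- complexify, symmetrize, arrange rational data, produce $h$, restrict to the real trace --- is the paper's, but you invert the order of two steps in a way that breaks the argument. The paper first completes the complexified duplet by adding conjugate pairs of non-real separatrices so that (S.2) and (S.3) hold, and only \emph{then} applies Proposition \ref{prop-rational-index} to the completed symmetric duplet to obtain, from scratch and in a $J$-symmetric way, a system of indices lying entirely in $\Q_{-}$. You instead fix real (rational negative) indices first, forming a real q-dicritical triplet, and then extend via Proposition \ref{prop-quasi-triplet}, claiming the indices added there ``can be picked in $\Q_{-}$.'' That claim is false in general: Proposition \ref{prop-quasi-triplet} must distribute the deficit $c(D)-c^{\aleph}(D)$ among the new non-real points of $D$, and when this deficit lies in $\Q_{+}$ no collection of indices in $\Q_{-}$ (the only rational values allowed by (I.4)) can sum to it --- this is exactly why the proof of Proposition \ref{prop-quasi-triplet} resorts to irrational $b_{1},b_{2}$. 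An irrational index at level $n$ propagates through the consistency relation $\lambda_{S}=-\lambda_{D}I_{p}(D)$ to an irrational residue, and then the final step $h=\prod f_{S}^{n_{S}}$ with integer exponents is unavailable. A related soft spot: you run the combinatorics of Proposition \ref{prop-rational-index} on the real q-triplet, but that argument is rooted at a separatrix guaranteed by (S.2), which a q-triplet need not satisfy; and since (I.3) is dropped at the q-level there is nothing for that combinatorics to achieve there anyway.

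Two smaller points. Your appeal to Proposition \ref{prop-withour-real-escape} is unnecessary for this theorem: escape separatrices are separatrices of $\LL$ that are \emph{not} poles of $\eta=dh/h$, so they are components of non-typical fibers and do not contribute branches to $h=0$ or $h=\infty$; the equality of the real zero/pole set with $\sep$ follows simply because all auxiliary poles occur in conjugate non-real pairs whose real traces reduce to the origin. The fix for the main gap is just to adopt the paper's ordering: complete the symmetric duplet first, then let Proposition \ref{prop-rational-index} assign all indices symmetrically in $\Q_{-}$, and proceed as in Theorem \ref{teo-structure-meromporphic}.
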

\begin{proof} Information from $\kappa_{\R}$ and $\sep$ define a real q-dicritical duplet
$\DD^{\aleph}_{\R}    = (\Delta^{\aleph}_{\R} ,\Sigma^{\aleph}_{\R})$. We complexify  it
and add a symmetric set of complex separatrices in order that conditions (S.2) and (S.3) are accomplished,   obtaining a dicritical duplet
$\DD    = (\Delta,\Sigma)$.
Now we proceed as in Section \ref{section-moromorphic-functions}.
By Proposition \ref{prop-rational-index}, we can produce a system of indices $\Upsilon$ associated with $\DD$ having indices
in $\Q_{-}$, and we do it in a symmetric way with respect to the involution $J$.
As in Theorem \ref{teo-structure-meromporphic}, having as reference the symmetric dicritical triplet
$\TT    = (\Delta,\Sigma,\Upsilon)$,  we build a germ of  meromorphic function $h$ at $(\C^{2},0)$, which is also symmetric with respect to $J$.
The restriction of $h$ to the real trace $\R^{2}$ is the desired germ of real meromorphic function.
\end{proof}


\section{Escape set and escape separatrices}
\label{section-escape}

In the construction of logarithmic models, as   byproducts,  some separatrices
not originally modelled  by the dicritical quadruplet appear.
If $\QQ$ is a dicritical quadruplet and $\eta$ is a $\QQ$-logarithmic $1$-form, it turns out that the reduction of
singularities of the singular foliation $\LL$ defined by $\eta$ may be longer than the underlying sequence of blow-ups of $\QQ$. The additional blow-ups
start at centers called escape points, which are  dicritical  points for $\QQ$, outside   its support, where
the strict transform of $\eta$ is closed holomorphic, having thus a holomorphic first integral.
Escape points   give rise to some isolated separatrices for $\LL$, accordingly called
escape separatrices.
In this section we shall present a description of  these objects.
In the real case, escape points   can be eliminated  in
a process   called logarithmic modification, by which  additional dicritical separatrices are included,
  without affecting the data conveyed by the original  dicritical quadruplet.

\subsection{Escape function and faithful $1$-forms}

 Let $\eta$ be a  logarithmic $1-$form at $(\C^{2},0)$, written as in \eqref{eq-log-form}. We can suppose that
 the holomorphic closed part $\alpha$ has been incorporated into one of the equations of the polar set, in such a way that
\[ \eta = \lambda_{1} \frac{d f_{1}}{f_{1}} + \cdots +  \lambda_{r} \frac{d f_{r}}{f_{r}},\]
where  $r \geq 2$ and, for $i=1,\ldots,r$, $f_{i} \in \cl{O}_{0}$ is    irreducible with order $ \nu_{i}  = \nu_{0}(f_{i})$
and $\lambda_{i}  \in \C^{*}$.
Set
$\lambda_{0} = \sum_{i=1}^{r} \nu_{i} \lambda_{i}$, the weighted balance of residues of $\eta$.
Denote by $\LL$
the germ of holomorphic foliation  at $(\C^{2},0)$ defined by $\eta$.
This foliation is also induced by the holomorphic $1$-form  $\omega = f \eta$, where $f  = f_{1} \cdots f_{r}$,
which may have a one dimensional singular set.
For the quadratic blow-up $\sigma: (\tilde{\C}^{2},D) \to (\C^{2},0)$, we consider
coordinates $(x,t) \in \C^{2}$ such that  $\sigma(x,t) = (x,xt)$, in which $D = \sigma^{-1}(0)$ has $x=0$ as an equation.
 Then
the \emph{divided blow-up} of $\omega$  (even in the non-isolated singularity case) is
\begin{equation}
\label{eq-divided-blowup}
 \text{(i) non-dicritical:} \ \ \tilde{\omega} = \sigma^{*}\omega / x^{m_{0}} \qquad \text{or} \qquad
 \text{(ii) dicritical:} \ \ \tilde{\omega} = \sigma^{*}\omega / x^{m_{0}+1},
\end{equation}
where $m_{0} = \nu_{0}(\omega)=  \nu_{0}(f \eta)$ (see, for instance, \cite{camacho1984}).
The $1$-form $\tilde{\omega}$, which induces $\tilde{\LL} = \sigma^{*} \LL$  in the coordinates $(x,t)$, is  holomorphic and  does not  contain $D$ in its singular set.

Recall, from   Definition \ref{def-faithful}, that
$\eta$ is $1$-faithful if and only if $m_{0} = \nu_{0}(\omega) = \nu_{0}(f \eta) = m-1$, where  $m = \nu_{0}(f) = \nu_{1} + \cdots + \nu_{r}$.
This is equivalent to the fact that, in
\[ \omega = f \eta =  \sum_{i=1}^{r} \lambda_{i} f_{1} \cdots \widehat{f_{i}} \cdots f_{r} d f_{i} , \]
the initial term is precisely that obtained by operating with the initial terms of the equations $f_{i}$.
In other words, it is expressed exclusively in terms of the residues $\lambda_{i}$ and of the equations of the tangent cones of the functions $f_{i}$.

By a linear change of coordinates,
we can assume that none of the functions $f_{i}$   contains  the $y$-axis in its tangent
cone. We  write, after possibly multiplying   by   non-zero constants,
\[ f_{i}(x,y) = (y -\alpha_{i}x)^{\nu_{i}} +  g_{i}(x,y), \]
where
$\alpha_{i} \in \C$ and  $g_{i}(x,y) \in \OO_{0}$ is such that $\nu_{0}(g_{i}) \geq \nu_{i}+1$.

We have
\[f_{i}(x,xt) = x^{\nu_{i}}\tilde{f}_{i}(x,t)= x^{\nu_{i}} \left((t- \alpha_{i})^{\nu_{i}} + x \tilde{g}_{i}(x,t) \right),\]
where $\tilde{g}_{i}(x,t) = g_{i}(x,xt)/x^{\nu_{i}+1}$.
It follows readily that
\begin{equation}
\label{eq-sigma*eta}
\tilde{\eta} = \sigma^{*} \eta = \lambda_{0} \frac{dx}{x} +
\lambda_{1} \frac{d \tilde{f}_{1}}{\tilde{f}_{1}} + \cdots +  \lambda_{r} \frac{d \tilde{f}_{r}}{\tilde{f}_{r}}.
\end{equation}

If we set    $\tilde{f} = \tilde{f}_{1} \cdots \tilde{f}_{r}$, we have
\begin{equation}
\label{eq-sigma*omega}
 \sigma^{*} \omega =  \sigma^{*}  ( f  \eta )= x^{m} \tilde{f}  \tilde{\eta}.
 \end{equation}

If $\lambda_{0} \neq 0$, by cancelling the poles in
\eqref{eq-sigma*eta}, we get
\[\tilde{\omega} = x \tilde{f}   \tilde{\eta} = \lambda_{0} \tilde{f} dx +
 x \sum_{i=1}^{r} \lambda_{i} \tilde{f}_{1} \cdots \widehat{ \tilde{f}_{i}} \cdots \tilde{f}_{r} d \tilde{f}_{i},\]
 where,
  as usual, ``\, $\widehat{\ \ }$\, '' indicates the absence of the corresponding factor.
It is clear that $D = \sigma^{-1}(0)$ is invariant by
$\tilde{\omega}$, that is, the blow-up $\sigma$ is non-dicritical.
Comparing with \eqref{eq-divided-blowup} and \eqref{eq-sigma*omega},
we conclude that $m_{0} = m-1$, and, thus, $\eta$ is $1$-faithful.

On the other hand, if $\lambda_{0} = 0$, then
\begin{equation}
\label{eq-tilde-omega}
\tilde{\omega} =  \tilde{f}   \tilde{\eta}=
 \sum_{i=1}^{r} \lambda_{i} \tilde{f}_{1} \cdots \widehat{ \tilde{f}_{i}} \cdots \tilde{f}_{r} d \tilde{f}_{i}.
 \end{equation}
If we suppose $\eta$ to be $1$-faithful then, by definition, $m-1 = m_{0} = \nu_{0}(f \eta)$.
Looking again at \eqref{eq-divided-blowup} and \eqref{eq-sigma*omega}, the equality
$m =  m_{0} + 1$
implies that $\sigma$ is dicritical. That is, $D$ is non-invariant,
which  is   equivalent to the non-vanishing of
the term independent of $x$ in the coefficient of $d t$ in   $\tilde{\omega}$.

Let us thus assume  $\eta$ to be $1$-faithful and main resonant, i.e. $\lambda_{0}=0$.
Our goal is to track the tangency points between $\tilde{\LL} = \sigma^{*} \LL$ and  $D$.
The term independent of $x$ in the coefficient of $d t$ in   $\tilde{\omega}$   is
\begin{multline}
\label{eq-tang-dt}
\ \ \ \sum_{i=1}^{r} \lambda_{i} \nu_{i} (t-\alpha_{1})^{\nu_{1}}
\cdots (t-\alpha_{i})^{\nu_{i}-1} \cdots (t-\alpha_{r})^{\nu_{r}}  \\
= (t-\alpha_{1})^{\nu_{1}-1} \cdots
(t-\alpha_{r})^{\nu_{r}-1}
\sum_{i=1}^{r} \lambda_{i} \nu_{i} (t-\alpha_{1})
\cdots \widehat{(t-\alpha_{i})} \cdots (t-\alpha_{r}) .
\end{multline}
Let $a_{1}, \ldots, a_{\ell}$ denote the distinct points of  the set $\{\alpha_{1},\ldots,\alpha_{r}\}$.
For $k=1,\ldots, \ell$, let $I_{k} = \{1 \leq i \leq r;\ \alpha_{i} = a_{k}\}$.
For each $k$, set
\[m_{k} = \sum_{i \in I_{k}} \nu_{i} \qquad \text{and} \qquad \rho_{k} =  \sum_{i \in I_{k}} \lambda_{i} \nu_{i} .\]

With this notation, we have that \eqref{eq-tang-dt} is   the polynomial
\begin{equation}
\label{eq-polynomial-pdot}
P_{\eta}(t) =
\sum_{k=1}^{\ell} \rho_{k} (t-a_{1}) \cdots \widehat{(t-a_{k})} \cdots (t-a_{\ell})
\end{equation}
times $(t-a_{1})^{m_{1}-1} \cdots
(t-a_{\ell})^{m_{\ell}-1}$.
The preceding discussion allows us to register  the following conclusion:
\begin{fact}
\label{fact-faithfull-P-eta}
A logarithmic $1$-form $\eta$, main resonant at $0 \in \C^{2}$,  is   $1$-faithful  if and only if  $P_{\eta} \neq 0$.
\end{fact}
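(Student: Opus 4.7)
The plan is to read off the equivalence directly from the computations already done in the discussion leading up to the statement; all the substantive work is contained there, and what remains is to trace a chain of equivalences.

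First, I would recall that the hypothesis $\lambda_{0}=0$ has been shown to force the expression
\[\tilde{\omega} = \tilde{f}\, \tilde{\eta} = \sum_{i=1}^{r} \lambda_{i}\tilde{f}_{1}\cdots \widehat{\tilde{f}_{i}}\cdots \tilde{f}_{r}\, d\tilde{f}_{i}\]
of \eqref{eq-tilde-omega}. Comparing \eqref{eq-sigma*omega} with the cases (i) and (ii) in \eqref{eq-divided-blowup}, the equality $m = m_{0}+1$ that characterises $1$-faithfulness is equivalent to $\sigma$ being a dicritical blow-up for $\omega$, i.e., to $D=\sigma^{-1}(0)=\{x=0\}$ being non-invariant by $\tilde{\omega}$.

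Next, $D$ is non-invariant by $\tilde{\omega}$ if and only if the restriction $\tilde{\omega}|_{D}$ is non-zero, which in the chosen coordinates means that the coefficient of $dt$ in $\tilde{\omega}$, evaluated at $x=0$, does not vanish identically as a polynomial in $t$. The explicit expansion $f_{i}(x,xt)=x^{\nu_{i}}((t-\alpha_{i})^{\nu_{i}}+x\tilde{g}_{i})$ already plugged into \eqref{eq-tilde-omega} yields precisely the computation displayed in \eqref{eq-tang-dt}, showing that this coefficient is
\[(t-a_{1})^{m_{1}-1}\cdots(t-a_{\ell})^{m_{\ell}-1}\, P_{\eta}(t),\]
with $P_{\eta}$ as defined in \eqref{eq-polynomial-pdot}.

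Finally, since $(t-a_{1})^{m_{1}-1}\cdots(t-a_{\ell})^{m_{\ell}-1}$ is a nonzero polynomial in $t$, the product above vanishes identically if and only if $P_{\eta}\equiv 0$. Concatenating the equivalences of the two preceding paragraphs with this trivial observation gives
\[\eta\text{ is }1\text{-faithful}\iff \sigma\text{ is dicritical}\iff \tilde{\omega}|_{D}\not\equiv 0\iff P_{\eta}\not\equiv 0,\]
which is the assertion. There is no real obstacle here: the statement is essentially a bookkeeping summary of the calculation carried out above Fact \ref{fact-faithfull-P-eta}, and the only thing one must verify is that none of the intermediate factors $(t-a_{k})^{m_{k}-1}$ can kill $P_{\eta}$, which is automatic from their being monic polynomials in $t$.
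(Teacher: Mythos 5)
Your identification of the coefficient of $dt$ in $\tilde f\tilde\eta$ along $x=0$ with $(t-a_1)^{m_1-1}\cdots(t-a_\ell)^{m_\ell-1}P_\eta(t)$, and the closing remark that the monic factor cannot kill $P_\eta$, are correct and are exactly the computational content of the paper's discussion. The problem is the first link of your chain. From \eqref{eq-sigma*omega} and \eqref{eq-divided-blowup} you only get one implication: if $m_0=m-1$, then $x^{m_0+1}=x^m$ divides $\sigma^*\omega$, so $\sigma$ is dicritical. The converse ``$\sigma$ dicritical $\Rightarrow m_0=m-1$'' does not follow from comparing those two displays: dicriticality says that $x^{m_0+1}$ is the exact power of $x$ dividing $\sigma^*\omega$, and since $x^m$ always divides $\sigma^*\omega=x^m\tilde f\tilde\eta$ in the main resonant case, this only yields $m_0\geq m-1$, which holds anyway. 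A priori one could have $m_0\geq m$ (i.e.\ $x$ divides $\tilde f\tilde\eta$) with $\sigma$ still dicritical at that higher order; in that situation $\tilde\omega=\sigma^*\omega/x^{m_0+1}$ is no longer equal to $\tilde f\tilde\eta$, so the identification of $\tilde\omega|_D$ with the displayed product in your second and third steps also breaks down. In short, both your first equivalence and the substitution $\tilde\omega=\tilde f\tilde\eta$ silently assume that $x^m$ is the exact power of $x$ dividing $\sigma^*\omega$, which is essentially the statement being proved.

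The repair is to argue the two implications separately, which is what the paper's discussion implicitly does. For ``$1$-faithful $\Rightarrow P_\eta\neq0$'' your route is fine and matches the paper: $m_0=m-1$ forces $\sigma$ dicritical, hence $D$ is non-invariant for $\sigma^*\omega/x^{m}=\tilde f\tilde\eta$, hence the $dt$-coefficient at $x=0$ is nonzero. For ``$P_\eta\neq0\Rightarrow 1$-faithful'' you do not need dicriticality at all: the $dt$-coefficient of $\sigma^*\omega$ equals $x^{m}\bigl((t-a_1)^{m_1-1}\cdots(t-a_\ell)^{m_\ell-1}P_\eta(t)+O(x)\bigr)$, so $P_\eta\neq0$ shows this coefficient is exactly divisible by $x^{m}$, whence $\nu_0(\omega)\leq m-1$ and therefore $m_0=m-1$.
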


The points of tangency between $\tilde{\LL}$  and $D$  out of the set $\{a_{1},\ldots,a_{\ell}\}$
are  the roots of $P_{\eta}(t)$.
At each one of them, $\tilde{\eta}$ is closed and holomorphic and, thus,  $\tilde{\LL}$ has a holomorphic first integral.
The roots of $P_{\eta}(t)$ determine points
 of two kinds, depending on $\tilde{\LL}$ being transverse to $D$ or not.
At a point $p \in D$ of the first kind,  the leaf through $p$ is contained in the singular set of $\tilde{\eta}$.
However, if $p \in D$ is of  the second kind, the reduction of singularities of $\LL$ will be non trivial over $p$, formed by
non-dicritical blow-ups, which give
 rise to some isolated separatrices for $\LL$. It is a escape point, with respect to the branches in the polar set of $\eta$
or with respect to any dicritical quadruplet $\QQ = (\Delta,\Sigma,\Upsilon,\Lambda)$ for which
$\eta$ is $\QQ$-logarithmic.
Observe that, in this latter case,   $\eta$ is faithful by hypothesis and,
from Assertion \ref{assertion-r2} in the proof of
Theorem \ref{teo-quadruplet},  we have that,  in \eqref{eq-polynomial-pdot},  $\rho_{k} \neq 0$, for  $k=1,\ldots, \ell$.

The vanishing of the weighted balance of residues, that is $\sum_{i=1}^{r} \nu_{i} \lambda_{i} = \sum_{k=1}^{\ell}   \rho_{k} = 0$,
gives that $P_{\eta}(t)$ has degree at most $\ell -2$. The coefficient of the term of degree $\ell -2$ is,
 up to changing sign,
\begin{equation}
\label{eq-tk-2}
 - \sum_{k=1}^{\ell}  \left( \sum_{1 \leq i \leq \ell, \, i \neq k }  \rho_{i} \right)a_{k}
=  \sum_{k=1}^{\ell}    \rho_{k}a_{k}.
\end{equation}
Equivalent calculations in the coordinates $(u,y)$ of the blow-up, where $x = uy$, lead to the polynomial
\[\hat{P}_{\eta}(u) = - \sum_{k=1}^{\ell} \rho_{k}  a_{k} (1-a_{1}u)
\cdots \widehat{(1-a_{k}u)} \cdots (1-a_{\ell}u),\]
whose constant term is $ -\sum_{k=1}^{\ell}    \rho_{k}a_{k}$.
Thus, $P_{\eta}(t)$ has degree $\ell -2$ if and only if  $\hat{P}_{\eta}(0) \neq 0$, which means
that  $\tilde{\LL}$  and $D$ are transverse at $(u,y) = (0,0)$.
Thus, by choosing appropriately the blow-up coordinates, we can always assume that $P_{\eta}(t)$ has degree  $\ell -2$ and, thus,
 the search for escape points of $\eta$ on $D$ can be entirely done in  the coordinates $(x,t)$. In particular, if $\ell = 2$, $P_{\eta}$ is constant
and there are no escape points.

Let us define
\begin{equation}
\label{eq-rational-eta}
R_{\eta}(t) =  \sum_{k=1}^{\ell} \frac{ \rho_{k}}{t-a_{k}} = \frac{P_{\eta}(t)}{Q_{\eta}(t)} ,
\end{equation}
where $Q_{\eta}(t) =  (t-a_{1}) \cdots (t-a_{\ell})$.  We name   $R_{\eta}$ \emph{escape function}. The affine zeroes of this rational function
are the points of tangency   considered, among which we find the  escape points of $\eta$ on $D$.
As a consequence of Fact \ref{fact-faithfull-P-eta}, we have:
\begin{fact}
\label{fact-faithfull-R-eta}
A logarithmic $1$-form $\eta$, main resonant at $0 \in \C^{2}$,  is   $1$-faithful  if and only if  $R_{\eta} \neq 0$.
\end{fact}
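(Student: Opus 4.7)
The proof is essentially immediate given what has already been established. The plan is to reduce Fact \ref{fact-faithfull-R-eta} directly to Fact \ref{fact-faithfull-P-eta} via the identity $R_{\eta}(t) = P_{\eta}(t)/Q_{\eta}(t)$ set down in equation \eqref{eq-rational-eta}.

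Specifically, I would first observe that $Q_{\eta}(t) = (t-a_{1})\cdots(t-a_{\ell})$ is a nonzero polynomial, since the $a_{k}$ are (well-defined) complex numbers, independent of the residues or faithfulness hypothesis. Hence, as an element of $\C(t)$, the rational function $R_{\eta}$ vanishes identically if and only if its numerator $P_{\eta}$ does. Then, invoking Fact \ref{fact-faithfull-P-eta}, which characterizes $1$-faithfulness in the main resonant case by $P_{\eta} \neq 0$, the equivalence $R_{\eta} \neq 0 \Longleftrightarrow P_{\eta} \neq 0 \Longleftrightarrow \eta \text{ is } 1\text{-faithful}$ follows at once.

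There is no real obstacle here: the work was already done in the derivation of formula \eqref{eq-polynomial-pdot} and Fact \ref{fact-faithfull-P-eta}. The only thing worth flagging explicitly in the writeup is that the partial-fraction decomposition in \eqref{eq-rational-eta} is valid because the $a_{k}$ are pairwise distinct by construction (they are the distinct points in $\{\alpha_{1},\ldots,\alpha_{r}\}$), so $Q_{\eta}$ has simple roots and the passage between $P_{\eta}$ and $R_{\eta}$ is a genuine equivalence rather than a one-way implication. This is purely a bookkeeping remark; the proof itself is a one-line consequence of the preceding fact.
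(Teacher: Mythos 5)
Your proposal is correct and is exactly the paper's argument: the paper introduces $R_{\eta}=P_{\eta}/Q_{\eta}$ in \eqref{eq-rational-eta} and states Fact \ref{fact-faithfull-R-eta} as an immediate consequence of Fact \ref{fact-faithfull-P-eta}, since $Q_{\eta}\neq 0$ forces $R_{\eta}\neq 0$ to be equivalent to $P_{\eta}\neq 0$. Your remark that the $a_{k}$ are pairwise distinct is a reasonable (if unstated in the paper) bit of bookkeeping; nothing is missing.
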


The above is the key to prove the following result, which was used in the dicritical amalgamation in the proof of
Theorem \ref{teo-quadruplet}:

\begin{prop}
\label{prop-faithfull-amalgamation}
Let
$\eta = \eta_{1} + \cdots + \eta_{\ell}$, where each
$\eta_{j}$ is a germ of logarithmic $1$-form at $(\C^2,0)$ and $\ell \geq 2$.
Suppose that:
\begin{itemize}
\item the tangent cone of the polar set of each $\eta_{k}$ is a singleton, say $p_{k} \in D = \sigma^{-1}(0)$;
\item $p_{1}, \ldots, p_{\ell} \in D$ are distinct points;
\item if $\rho_{k}$ denotes the weighted balance of residues of $\eta_{k}$, then
$\rho_{1} + \cdots +  \rho_{\ell} = 0$;
\item  $\rho_{k} \neq 0$ for some $k$.
\end{itemize}
Then,  $\eta$ is $1$-faithfull.
\end{prop}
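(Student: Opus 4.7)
The plan is to deduce the proposition directly from Fact \ref{fact-faithfull-R-eta}, by computing the escape function $R_\eta$ of the sum $\eta = \eta_1 + \cdots + \eta_\ell$ and exhibiting an explicit pole.

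First, I would observe that $\eta$ is main resonant at $0 \in \C^2$. Indeed, the weighted balance of residues of $\eta$ is additive: if $\nu_{k,i}$ and $\lambda_{k,i}$ denote the orders and residues of the poles of $\eta_k$, then the weighted balance of $\eta$ equals $\sum_k \sum_i \nu_{k,i}\lambda_{k,i} = \sum_k \rho_k = 0$ by hypothesis. So Fact \ref{fact-faithfull-R-eta} applies, and it suffices to show that the escape function $R_\eta$ is not identically zero.

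Next, following the setup preceding formula \eqref{eq-rational-eta}, I would choose a linear coordinate change so that the $y$-axis is not contained in the tangent cone of any pole of $\eta$; this is possible because the relevant tangent cones form a finite set. In the blow-up coordinates $(x,t)$ with $\sigma(x,t)=(x,xt)$, each $p_k \in D = \sigma^{-1}(0)$ then corresponds to a distinct affine value $a_k \in \C$. Now I would partition the poles of $\eta$ by their tangent cones: the poles whose tangent cone is $p_k$ are exactly the poles of $\eta_k$, whose contribution to the weighted balance at $a_k$ is, by definition, $\rho_k$. Thus, in the notation of \eqref{eq-rational-eta}, the escape function of $\eta$ is
\[
R_\eta(t) \;=\; \sum_{k=1}^{\ell} \frac{\rho_k}{t-a_k}.
\]

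Finally, if $k_0$ is an index with $\rho_{k_0} \neq 0$, then $\lim_{t\to a_{k_0}}(t-a_{k_0})R_\eta(t) = \rho_{k_0}\neq 0$, because the other $\ell-1$ summands are regular at $a_{k_0}$ (the $a_k$ being distinct). Hence $R_\eta$ has a genuine pole at $a_{k_0}$ and in particular $R_\eta \not\equiv 0$, so $\eta$ is $1$-faithful by Fact \ref{fact-faithfull-R-eta}. The only subtle step is justifying that the escape function of the sum is indeed the sum of simple fractions $\rho_k/(t-a_k)$; this is not deep, but requires the observation that the $\ell$ tangent cones $a_1,\ldots,a_\ell$ are pairwise distinct, which is precisely the second hypothesis, and that the weighted balance restricted to each group of poles is exactly $\rho_k$.
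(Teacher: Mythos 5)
Your proposal is correct and follows essentially the same route as the paper: both reduce the statement to Fact \ref{fact-faithfull-R-eta} and observe that the escape function $R_\eta(t)=\sum_k \rho_k/(t-a_k)$, with the $a_k$ distinct and some $\rho_k\neq 0$, cannot vanish identically. Your extra remarks on the additivity of the weighted balance and the pole at $a_{k_0}$ just make explicit what the paper calls ``evident from formula \eqref{eq-rational-eta}.''
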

\begin{proof} We take advantage of the above notation and write the  blow-up $\sigma$ at $0 \in \C^{2}$ in coordinates $(x,t)$ such that $\sigma(x,t) = (x,tx)$, in which
the points
$p_{k}$ have coordinates $(x,t) = (0,a_{k})$, for $k=1,\ldots,\ell$.
On account of Fact \ref{fact-faithfull-R-eta}, it is enough to see that $R_{\eta} \neq 0$.
This is however evident from formula \eqref{eq-rational-eta}, since some $\rho_{k}$ is non-zero.
\end{proof}

\subsection{Logarithmic modifications}
We remain in the setting of the previous subsection,
 keeping the notation.
Let $a_{1}', \ldots, a_{s}' \in \C$ be distinct numbers, none of them in $\{a_{1},\ldots,a_{\ell}\}$.
Let $\lambda_{1}', \ldots, \lambda_{s}' \in \C^{*}$. A \emph{$1$-logarithmic modification} of $\eta$ with
parameters $\tau = \{(a_{1}',\lambda_{1}'), \ldots, (a_{s}',\lambda_{s}')\}$ is a logarithmic $1$-form of the kind
\[  \eta^{\tau} = \eta + \lambda_{1}' df_{1}' /f_{1}' + \cdots + \lambda_{s}' df_{s}' /f_{s}',\]
where, for $i=1,\ldots,s$, the function $f_{i}' \in \OO_{0}$ is an equation  of a smooth branch  at $(\C^{2},0)$ whose
tangent cone is $t = a_{i}'$. If $\lambda_{1}' + \cdots + \lambda_{s}'= 0$
then the polynomial $P_{\eta^{\tau}}$ has degree at most $\ell + s - 2$.
In analogy with \eqref{eq-tk-2},
the coefficient of its term of degree  $\ell + s - 2$  is, up to  sign,
\begin{equation}
\label{eq-tk3}
\sum_{k=1}^{\ell}    \rho_{k}a_{k} +  \sum_{i=1}^{s}   \lambda_{i}'a_{i}'.
\end{equation}
Supposing that  $P_{\eta}$ has degree   $\ell   - 2$,
then,  for a generic choice of $(a_{1}',\ldots,a_{s}',\lambda_{1}',\ldots,\lambda_{s}')$ in the hyperplane of equation
$\lambda_{1}' + \cdots + \lambda_{s}' = 0$ in $\C^{2s}$,
 the degree of $P_{\eta^{\tau}}$ is precisely $\ell + s - 2$, so that the escape points of  $\eta^{\tau}$
are found among the affine roots of  $R_{\eta^{\tau}}$.  In this case, we say that  both
$\tau$  and the corresponding $\eta^{\tau}$ are \emph{balanced}.

Now, let $\QQ = (\Delta,\Sigma,\Upsilon,\Lambda)$ be a dicritical quadruplet at $(\C^{2},0)$, main resonant at
$0 \in \C^{2}$, and   $\pi$ be its underlying sequence of blow-ups. Suppose that
$\{a_{1},\ldots,a_{\ell}\}$   lists all points of the tangent cone of $\sep_{0}$
--- which coincides with the intersection of the support $\supp_{1}$ with  $D = \sigma^{-1}(0)$, where $\sigma$ is the initial
blow up of $\pi$ ---
as values of $t$ of
  coordinates $(x,t)$ for $\sigma$.
As in the above paragraph,
let $\tau$ be a balanced set of parameters, accompanied by a choice of smooth branches
 $S_{i}' \in \br_{0}$ of equations $f_{i}'=0$, for $i=1,\ldots,s$.
We define the \emph{1-logarithmic modification} of $\QQ$ with parameters
$\tau$ as the dicritical quadruplet
$\QQ^{\tau} = (\Delta^{\tau},\Sigma^{\tau},\Upsilon^{\tau},\Lambda^{\tau})$ obtained from $\QQ$ in the following manner:
\begin{itemize}
  \item $\Delta^{\tau} = \Delta$;
  \item $\Sigma^{\tau}$ is built upon the decomposition at level $0$  obtained  by attaching to $\sepdic_{0}$  the
  curves   $S_{i}'$, for $i=1,\ldots,s$.
  \item  $\Upsilon^{\tau}$ preserves the indices of  $\Upsilon$ and assigns  to the new elements of $\sepdic_{0}$ indices $I_{0}(S_{i}') =  1$.
  \item $\Lambda^{\tau}$ preserves the residues of $\Lambda$ and  assigns  residues
  $\lambda(S_{i}') = \lambda_{i}'$.
 \end{itemize}
In this framework, we have:

\begin{prop}
\label{prop-log-modification}
Suppose that $\eta$ is a $\QQ$-logarithmic $1$-form for some dicritical quadruplet $\QQ = (\Delta,\Sigma,\Upsilon,\Lambda)$
at $(\C^{2},0)$, main resonant at
$0 \in \C^{2}$. Let $\eta^{\tau}$ be a balanced $1$-logarithmic
modification of $\eta$ with parameters $\tau$.
Then $\eta^{\tau}$ is $\QQ^{\tau}$-logarithmic.
\end{prop}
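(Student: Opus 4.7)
The plan is to verify the three conditions of Definition \ref{def-Q-logarithmic} for $\eta^{\tau}$ relative to $\QQ^{\tau}$. Condition (1) is immediate: by construction $\sep_{0}^{\tau} = \sep_{0} \cup \{S_{1}',\ldots,S_{s}'\}$, with the added residues $\lambda_{i}'$ assigned by $\Lambda^{\tau}$, so the natural writing
\[
\eta^{\tau} = \sum_{S \in \sep_{0}^{\tau}} \lambda_{S}\, \frac{d f_{S}}{f_{S}} + \alpha
\]
inherited from the $\QQ$-logarithmic writing of $\eta$ is exactly of the form demanded by $\Lambda^{\tau}$.

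For condition (3) at the origin, the weighted balance of residues of $\eta^{\tau}$ equals the balance of $\eta$ (which is zero by the main-resonance hypothesis on $\QQ$) plus $\sum_{i=1}^{s}\nu_{0}(f_{i}')\lambda_{i}' = \sum_{i=1}^{s}\lambda_{i}'$, which vanishes because $\tau$ is balanced. Hence $\eta^{\tau}$ is main resonant at $0$, so the initial blow-up $\sigma$ is dicritical for the foliation it defines. Moreover, the balanced hypothesis forces $P_{\eta^{\tau}}$ to attain its maximal degree $\ell+s-2$; in particular $P_{\eta^{\tau}} \neq 0$, and Fact \ref{fact-faithfull-P-eta} yields $1$-faithfulness of $\eta^{\tau}$ at $0$.

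For higher levels and for the dominance $\DD(\eta^{\tau}) \geq \DD^{\tau}$ (condition (2)), I would analyze $\sigma^{*}\eta^{\tau}$ locally on $D = \sigma^{-1}(0)$ in coordinates $(x,t)$ with $\sigma(x,t) = (x,xt)$. At a point $p = (0,a_{k})$ on the tangent cone of $\sep_{0}$, each new pull-back satisfies $\sigma^{*}(\lambda_{i}'\, d f_{i}'/f_{i}') = \lambda_{i}'\, dx/x + \lambda_{i}'\, d\tilde{f}_{i}'/\tilde{f}_{i}'$; since $a_{i}' \neq a_{k}$ the function $\tilde{f}_{i}'$ is a unit at $p$, so the total added contribution is $(\sum_{i}\lambda_{i}')\, dx/x$ plus a closed holomorphic germ, which is itself just a closed holomorphic germ by the balanced hypothesis. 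Thus $\sigma^{*}\eta^{\tau}$ differs from $\sigma^{*}\eta$ near $p$ by a closed holomorphic $1$-form, and by the invariance remark following Definition \ref{def-faithful} it inherits the faithfulness of $\sigma^{*}\eta$; since the localizations $\QQ^{\tau}_{p}$ and $\QQ_{p}$ coincide at these points, the $\QQ_{p}$-logarithmicity of $\sigma^{*}\eta$ transfers to the $\QQ^{\tau}_{p}$-logarithmicity of $\sigma^{*}\eta^{\tau}$. At a new point $p = (0,a_{i}')$, all of the $\tilde{f}_{j}$ and the $\tilde{f}_{j}'$ with $j \neq i$ are units, so $\sigma^{*}\eta^{\tau}$ has only the simple pole $\lambda_{i}'\, d\tilde{f}_{i}'/\tilde{f}_{i}'$ plus a closed holomorphic part, which is trivially faithful and matches $\QQ^{\tau}_{p}$, in which $\sigma^{*}S_{i}'$ is a new dicritical separatrix of residue $\lambda_{i}'$ meeting $D$ transversally. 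At any other point of $D$, $\sigma^{*}\eta^{\tau}$ is closed holomorphic, producing at worst escape points for $\eta^{\tau}$, which are compatible with the dominance relation. The main obstacle I foresee is the bookkeeping above the points $(0,a_{k})$: one must certify that the closed holomorphic modification of $\sigma^{*}\eta$ preserves not merely the level-$1$ data but the entire reduction of singularities over $(0,a_{k})$; this reduces to the propagation of faithfulness along blow-ups under closed holomorphic summands, which is essentially built into the amalgamation step of Theorem \ref{teo-quadruplet}.
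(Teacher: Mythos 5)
Your proposal is correct and follows essentially the same route as the paper's proof: main resonance plus $1$-faithfulness at the origin (via the nonvanishing of $P_{\eta^{\tau}}$, equivalently of $R_{\eta^{\tau}}$, guaranteed by the balanced hypothesis), then the observation that over each old point $p_{k}=(0,a_{k})$ the pull-back $\sigma^{*}\eta^{\tau}$ differs from $\sigma^{*}\eta$ by a closed holomorphic $1$-form --- so that faithfulness and $\QQ_{p_{k}}$-logarithmicity transfer, the localizations $\QQ^{\tau}_{p_{k}}$ and $\QQ_{p_{k}}$ being identical --- and finally the trivial check at the new points $p_{i}'$. The ``bookkeeping'' you flag at the end is exactly what the paper's remark after Definition \ref{def-faithful} (invariance of faithfulness under closed holomorphic summands) is designed to dispose of, so there is no gap.
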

\begin{proof} First,  the initial  blow-up $\sigma$  is dicritical for  $\eta^{\tau}$. Indeed,
$\eta^{\tau}$ is main resonant and $1$-faithful at $0 \in \C^{2}$, the latter being a consequence  of $R_{\eta^{\tau}} \neq 0$.
Next, we examine the points of the support of
the dicritical duplet $\DD^{\tau} = (\Delta^{\tau},\Sigma^{\tau})$ over $D = \sigma^{-1}(0)$. The points   $p_{k}$, corresponding to
$t= a_{k}$, for $k=1,\ldots,\ell$,
are simultaneously in the supports  of  $\DD^{\tau}$ and $\DD = (\Delta,\Sigma)$. At each of these points,
 $\sigma^{*}\eta^{\tau}$  differs from $\sigma^{*} \eta$
by a closed holomorphic $1$-form. Hence also $\sigma^{*}\eta^{\tau}$ is $\QQ_{p_{k}}$-logarithmic, which is the same of being $\QQ_{p_{k}}^{\tau}$-logarithmic,
since $\QQ_{p_{k}}^{\tau}$ and $\QQ_{p_{k}}$ coincide.   The other points of the support of $\DD^{\tau}$ are $p_{1}',\ldots,p_{s}' $, corresponding to
$t= a_{1}',\ldots,t=a_{s}'$. The fact that  $\sigma^{*}\eta^{\tau}$ at each $p_{i}'$ is  $\QQ_{p_{i}'}^{\tau}$-logarithmic is obvious.
\end{proof}

Suppose that  $\QQ = (\Delta,\Sigma,\Upsilon,\Lambda)$ has height $n = h(\Delta)$. Let
 $d$ be the number of dicritical blow-ups  of $\Delta$ and
denote by $q_{j_{k}} \in \D_{j_{k}}$ their centers, where $0 \leq j_{1} < \cdots < j_{d} < n$.
Starting with $q= q_{j_{d}}$, we can consider the localization $\QQ_{q}$ and, for a  balanced set of parameters  $\tau_{q}$, perform
 the corresponding $1$-logarithmic modification, obtaining a dicritical quadruplet
 $\QQ_{q}^{\tau_{q}}$ at $(\tilde{M}_{j_{d}},q)$ = $(\tilde{M}_{j_{d}},q_{j_{d}})$. The incorporation of new dicritical separatrices and their residues
at   levels of $\QQ$ lower than $j_{d}$ gives rise to a new dicritical quadruplet
 $\QQ^{\tau_{q}} = (\Delta^{\tau_{q}} ,\Sigma^{\tau_{q}},\Upsilon^{\tau_{q}},\Lambda^{\tau_{q}})$ at $(\C^{2},0)$.
 Since $\tau_{q}$ is balanced and all new
dicritical separatrices introduced are equisingular, their weighted balances of residues, at levels lower than $j_{d}$, will always vanish.
Therefore, at any level, these new separatrices give no contribution to the weighted balances of residues prescribed by $\QQ$.
Descending the dicritical structure, we  carry out successive $1$-logarithmic modifications at each $q_{j_{k}}$ with
balanced set of parameters $\tau_{j_{k}}$.
The outcome is a dicritical quadruplet
$\QQ^{\textsc{t}} = (\Delta^{\textsc{t}},\Sigma^{\textsc{t}},\Upsilon^{\textsc{t}},\Lambda^{\textsc{t}})$, where
the superscript $\textsc{T}$ makes reference to the set of information given by the pairs $(q_{j_{k}},\tau_{j_{k}})$,
 for $k=1,\ldots,d$, along with the new separatrices introduced.
Now, if $\eta$ is a $\QQ$-logarithmic $1$-form at $(\C^{2},0)$, the successive $1$-logarithmic modifications just described
can be applied to $\eta$. The result is a germ logarithmic $1$-form $\eta^{\textsc{t}}$, which, by a careful
application of Proposition \ref{prop-log-modification}, can be shown to be $\QQ^{\textsc{t}}$-logarithmic.
It is noteworthy that a $1$-logarithmic modification at a point $q_{j_{k}}$ modifies
the escape function  on $D_{j_{k}+1}=\sigma_{j_{k} +1}^{-1}(q_{j_{k}})$ but does not change escape functions on dicritical components
at higher or lower  levels.

\subsection{Symmetric logarithmic modifications}

Suppose now that $\eta$ is a symmetric logarithmic $1$-form at $(\C^{2},0)$, defining a germ of singular holomorphic foliation $\LL$.
A set of parameters $\tau = \{(a_{1}',\lambda_{1}'), \ldots, (a_{s}',\lambda_{s}')\}$ is \emph{symmetric} if its invariant by conjugation.
In this case
we say that $\tau$ parametrizes a
 \emph{symmetric $1$-logarithmic modification} of $\eta$ provided the set of equations
$f_{1}', \ldots, f_{s}' \in \OO_{0}$ is also invariant by the involution $J$.
In particular, if   $a_{i}' \in \R$, then also  $\lambda_{i}' \in \R$ and  $f_{i}' \in \OO_{0}$
defines a real curve. As a consequence, $\eta^{\tau}$   is also a symmetric logarithmic $1$-form.
Remark that, if $\eta$ is main resonant  and symmetric, then the set of parameters involved in formula \eqref{eq-rational-eta} is symmetric under conjugation. Thus,
$P_{\eta}$  can be obtained with real coefficients (the same for $P_{\eta^{\tau}}$, for a balanced symmetric $1$-logarithmic modification $\eta^{\tau}$).
In particular, the number of real roots of   $P_{\eta}$ and its degree have the same parity.

Balanced symmetric  1-logarithmic modifications can be used to  eliminate real  escape points  of symmetric logarithmic $1$-forms, as
described in the next proposition.
\begin{prop}
\label{prof-no-real-roots}
 Suppose that $\eta$ is a main resonant $1$-faithful symmetric logarithmic 1-form at $(\C^{2},0)$.
Then there is a balanced  symmetric set of real parameters $\tau$ such that
the corresponding symmetric
 $1$-logarithmic modification  $\eta^{\tau}$  has no  escape points in the   real trace of the divisor of the quadratic blow-up
at $0 \in \C^{2}$.
\end{prop}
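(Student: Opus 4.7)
The plan is to realize $P_{\eta^\tau}$ as an explicit strictly positive real polynomial by a combinatorial placement of the poles $a_i'$ followed by an algebraic recovery of the residues $\lambda_i'$. By symmetry of $\eta$, the data $\{(a_k,\rho_k)\}_{k=1}^\ell$ is invariant under conjugation; by main resonance $\sum_k\rho_k=0$; and by $1$-faithfulness $R_\eta\not\equiv 0$. After a preliminary real linear change of coordinates we may assume $\sum_k\rho_k a_k\neq 0$, so that $A(t):=\sum_k\rho_k\prod_{j\neq k}(t-a_j)$ has degree exactly $\ell-2$. For real balanced parameters $\tau=\{(a_i',\lambda_i')\}_{i=1}^s$ (meaning $a_i',\lambda_i'\in\R$ and $\sum_i\lambda_i'=0$), one has
\[
P_{\eta^\tau}(t)=A(t)\prod_{i=1}^s(t-a_i')+G(t)Q(t),\qquad G(t)=\prod_{k=1}^\ell(t-a_k),
\]
where $Q(t)=\sum_i\lambda_i'\prod_{l\neq i}(t-a_l')$ is a real polynomial of degree $\leq s-2$. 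For fixed real distinct $a_i'$, the correspondence $(\lambda_1',\ldots,\lambda_s')\mapsto Q$ is a linear bijection from the hyperplane $\{\sum_i\lambda_i'=0\}$ onto real polynomials of degree $\leq s-2$, and since $G(a_k)=0$ the value $P_{\eta^\tau}(a_k)=F(a_k):=A(a_k)\prod_i(a_k-a_i')$ is independent of $Q$.

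The first step is a combinatorial sign arrangement of the $a_i'$'s ensuring $F(a_k)>0$ for every real $a_k$. Ordering the real $a_k$'s as $b_1<\cdots<b_m$, writing $\rho'_l$ for the $\rho_k$ associated to $b_l$, and setting $N_l=\#\{i:a_i'>b_l\}$, a direct computation yields $\mathrm{sign}\,F(b_l)=\mathrm{sign}(\rho'_l)(-1)^{m-l}(-1)^{N_l}$, since every complex-conjugate pair contributes a factor $(b_l-a_j)(b_l-\bar a_j)>0$. The requirement $F(b_l)>0$ is then a prescription of the parity of each $N_l$, equivalently of the parity of the number $n_l$ of $a_i'$'s placed in each interval $(b_l,b_{l+1}]$ (with $b_{m+1}=+\infty$). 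Inserting an appropriate odd or even number of $a_i'$'s in each of these intervals realizes any such parity pattern, and the number $n_0$ of $a_i'$'s less than $b_1$ remains free. We exploit this freedom to further arrange $s\equiv\ell\pmod 2$ (so that $\ell+s-2$ is even, a necessary condition for a real polynomial of that degree to have no real roots) and $s\geq\ell+2$.

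With the $a_i'$ fixed as above, the second step produces $P_{\eta^\tau}$ explicitly. Pick a real polynomial $h$ strictly positive on $\R$ of degree exactly $s-\ell-2$, say $h(t)=(1+t^2)^{(s-\ell-2)/2}$, and set
\[
P(t)=A(t)\prod_{i=1}^s(t-a_i')+MG(t)^2h(t)
\]
for a real parameter $M>0$. Then $P$ has degree $\ell+s-2$ with leading coefficient $M+\sum_k\rho_k a_k$, and $P(a_k)=F(a_k)$ for every $k$. For $M$ sufficiently large, $P>0$ on $\R$: at each real $a_k$, $P(a_k)=F(a_k)>0$ by the first step, so $P$ stays positive in a neighborhood; off small neighborhoods of the real $a_k$'s, the term $MG^2h$ is bounded below by $Mc$ on compact sets and grows with $M$, dominating the bounded contribution $A\prod_i(t-a_i')$; at infinity, the same term dominates with positive leading coefficient. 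Then $Q:=(P-F)/G=MGh$ is a real polynomial of degree $s-2$, and the recovered residues $\lambda_i'=Q(a_i')/\prod_{l\neq i}(a_i'-a_l')\in\R$ automatically satisfy $\sum_i\lambda_i'=0$. Hence $\tau=\{(a_i',\lambda_i')\}$ is balanced, symmetric, and real, and the corresponding symmetric $1$-logarithmic modification $\eta^\tau$ has $P_{\eta^\tau}=P$ free of real roots, so $\eta^\tau$ has no escape points in the real trace of $D$. The principal difficulty is the combinatorial sign control of the first step together with the compatibility of the required parity condition on $s$; once these are settled, the positivity argument of the second step is a routine dominant-term estimate.
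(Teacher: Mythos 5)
Your argument is correct, but it follows a genuinely different route from the paper's. The paper works locally: it reduces to $P_{\eta}$ of even degree, then at each real zero $t_{0}$ of the escape function writes $R_{\eta}(t)=t^{n}g(t)$ near $t_{0}$ and performs a single small one- or two-pole modification, using Rouch\'e's theorem plus an explicit lower bound for $t^{n}(t^{2}-a^{2})$ (resp.\ $t^{n}(t-a)$) on $\R$ to show that the $n+2$ (resp.\ $n+1$) perturbed zeros all leave the real axis; it then iterates over the real zeros and restores balancedness at the end by a parity count. You instead solve the problem globally and in one shot: you observe that, for fixed real pole positions $a_{i}'$, the residues on the hyperplane $\sum_{i}\lambda_{i}'=0$ parametrize bijectively (via Lagrange interpolation) the correction polynomials $Q$ of degree $\le s-2$, that the only rigid constraints are the values $P_{\eta^{\tau}}(a_{k})=A(a_{k})\prod_{i}(a_{k}-a_{i}')$, and that these can be made positive at the real $a_{k}$ by a parity arrangement of the $a_{i}'$ in the intervals they determine; you then prescribe $P_{\eta^{\tau}}=A\prod_{i}(t-a_{i}')+MG^{2}h$ with $h>0$ and $M$ large and read off the residues. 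This trades the delicate Rouch\'e/inequality bookkeeping and the iteration for a clean linear-algebra-plus-positivity argument, and it produces the modification in a single step. Two caveats are worth recording. First, your construction forces $P_{\eta^{\tau}}(a_{k})=F(a_{k})$ at every real $a_{k}$, so it needs $P_{\eta}(a_{k})\neq 0$ (equivalently $\rho_{k}\neq 0$) at the real tangent directions of the poles; this is automatic in all the paper's applications (Assertion~2 in the proof of Theorem~\ref{teo-quadruplet} gives $\rho_{k}\neq 0$ for $\QQ$-logarithmic forms) but is not literally among the hypotheses as stated, whereas the paper's local perturbation is indifferent to where the real zeros of $R_{\eta}$ sit. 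Second, the paper's proof yields residues that can be taken in $\Q^{*}$ (they are free small parameters subject to open conditions), which is used later for meromorphic functions; your residues are determined by the formula $\lambda_{i}'=MG(a_{i}')h(a_{i}')/\prod_{l\neq i}(a_{i}'-a_{l}')$ and need not be rational, though since positivity of $P_{\eta^{\tau}}$ on $\R$ is an open condition on $(\lambda_{i}')$ within the balancing hyperplane, a small rational perturbation recovers this. Neither point is a gap in the proof of the proposition as such.
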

\begin{proof}
As above, we fix $(x,t)$ coordinates for the blow-up $\sigma$ at $0 \in \C^{2}$ so that
 the affine zeroes of the escape function $R_{\eta}(t)$ give all tangency points between $\tilde{\eta} = \sigma^{*}\eta$ and $D=\sigma^{-1}(0)$.
We can suppose that $P_{\eta}(t)$ has even degree. If this is not so,
we make a balanced symmetric 1-logarithmic modification with parameters $\{(a_{1}',\lambda_{1}'), (a_{2}',\lambda_{2}'), (a_{3}',\lambda_{3}')\}$, where $a_{i}',\lambda_{i}' \in \R^{*}$ for $i=1,2,3$, with $\lambda_{1}',\lambda_{2}',\lambda_{3}'$ sufficiently small. 
Let $t_{0}$ be a real zero of $R_{\eta}(t)$, which can be supposed to be $0 \in \C$ after a real translation in the coordinate $t$.
We  write, locally,
\[ R_{\eta}(t) = \frac{P_{\eta}(t)}{Q_{\eta}(t)}=
t^{n}g(t),\]
for some $n>0$, where $g(t)$ is a  holomorphic function in
a neighborhood of $0 \in \C$ such that $\alpha = g(0) \in \R^{*}$.
Choose $r>0$ sufficiently small such that
 $ |\alpha|/2 < |g(t)| < 2|\alpha|$
over $\overline{ \mathbb{D}}_{r}$.
This implies that
$g(t)$ has no zeroes over $\overline{\mathbb{D}}_{r}$ and,
 for  $t \in \R$  with $|t|<r$, $g(t)$ and $\alpha = g(0)$ have
the same sign.
Note also that  $|t^{n} g(t)| > r^{n}|\alpha|/2$ over $\partial \overline{ \mathbb{D}}_{r}$.

 \smallskip \smallskip
\par \noindent
\underline{Case 1}: $n$ even.
We perform a  balanced 1-logarithmic modification with parameters $\tau_{0} = \{(a,\lambda),(-a,-\lambda)\}$, where the sufficiently small $a,\lambda \in \R^{*}$, with $a>0$, are   to be specified later,  getting
a logarithmic $1$-form $\eta^{\tau_{0}}$ for which
\[R_{\eta^{\tau_{0}}}(t) = t^{n} g(t) + \frac{\lambda}{t-a} + \frac{-\lambda}{t+a} =
 \frac{t^{n}(t^{2}-a^{2}) g(t) + 2a\lambda}{t^{2}-a^{2}}.\]
Choose $|a| < r/2$, so  that $|\lambda/(t \pm a)| < 2|\lambda|/r$ 
over  $\partial \overline{ \mathbb{D}}_{r}$.
If we also choose $\lambda \in \R^{*}$ such that $|\lambda| <  r^{n+1} |\alpha|/8$, we assure that
 \[ \left|\frac{\lambda}{t-a} + \frac{-\lambda}{t+a} \right| <
 \frac{4 |\lambda|}{r} <
 \frac{ r^{n} |\alpha|}{2} < |t^{n} g(t)| \]
  over $\partial \overline{\mathbb{D}}_{r}$. Hence, applying  Rouch\'es's theorem,
 $ t^{n} g(t) + \lambda/(t-a) - \lambda/(t+a)$  has exactly $n+2$ zeroes in $\mathbb{D}_{r}$.
 They  are roots of the function  $t^{n}(t^{2}-a^{2}) g(t) + 2a\lambda$.
If  $t_{0} \in \R$ is one of these roots, then
 \begin{equation}
 \label{eq-real-root0}
 t_{0}^{n}(t_{0}^{2}-a^{2})  = - \frac{2a\lambda}{g(t_{0})}.
 \end{equation}
 Observe that
$t^{n}(t^{2}-a^{2}) \geq -2 K a^{n+2}$ for every $t \in \R$,
where $K = \frac{1}{n+2}(\frac{n}{n+2})^{\frac{n}{2}}$.
Choose first a sufficiently small  $a \in \R^{*}$, with $a>0$, and then pick $\lambda \in \R^{*}$, also small,  but satisfying $|\lambda| >  2 |\alpha|K  a^{n+1} $.
We then have
 \[\frac{2a|\lambda|}{|g(t_{0})|} > ( 2 a ) 2 |\alpha| K  a^{n+1}  \frac{1}{2|\alpha|} = 2   K a^{n+2}.\]
Taking  $\lambda$ with the same sign as $\alpha = g(0)$,  we have
\[-\frac{2a\lambda}{g(t_{0})} < - 2   K a^{n+2}  \leq t^{n}(t^{2}-a^{2}) \ \ \ \text{for every}\ t \in \R.\]
Comparing with   \eqref{eq-real-root0}, we conclude that the real root $t_{0}$ cannot exist.

\smallskip \smallskip
\par \noindent
\underline{Case 2}: $n$ odd.
Now we do a non-balanced 1-logarithmic modification with parameter $\tau_{0} = \{(a,\lambda)\}$,  with sufficiently small $a,\lambda \in \R^{*}$.   The
  logarithmic $1$-form $\eta^{\tau_{0}}$ obtained is such that
\[R_{\eta^{\tau_{0}}}(t) =
 t^{n} g(t) + \frac{\lambda}{t-a} =
 \frac{t^{n}(t-a) g(t) + \lambda}{t-a}.\]
If  $|a| < r/2$, then $|\lambda/(t-a)| < 2|\lambda|/r$ over  $\partial \overline{ \mathbb{D}}_{r}$.
 Choosing $\lambda \in \R^{*}$ such that $|\lambda| <  r^{n+1} |\alpha|/4$, we assure that
 $|\lambda/(t-a)| < r^{n} |\alpha|/2 < |t^{n} g(t)|$ over $\partial \overline{\mathbb{D}}_{r}$. Hence, applying again Rouch\'es's theorem,
 $ t^{n} g(t) + \lambda/(t-a)$  has exactly $n+1$ zeroes in $\mathbb{D}_{r}$.
 These zeroes  are roots of the function  $t^{n}(t-a) g(t) + \lambda$.
If  $t_{0} \in \R$ is one of these roots, then
 \begin{equation}
 \label{eq-real-root1}
 t_{0}^{n}(t_{0}-a)  = - \frac{\lambda}{g(t_{0})}.
 \end{equation}
We have that
$t^{n}(t-a) \geq - K a^{n+1}$ for every $t \in \R$,
where $K =  \frac{1}{n+1} ( \frac{n}{n+1} )^{n}$. 
Choose first a sufficiently small  $a \in \R^{*}$ and then pick $\lambda \in \R^{*}$, also small,  but satisfying $|\lambda| >  2 |\alpha|K  a^{n+1} $.
We then have
 \[\frac{|\lambda|}{|g(t_{0})|} >  2 |\alpha|K  a^{n+1} \frac{1}{2|\alpha|} = K a^{n+1}.\]
Taking  $\lambda$ with the same sign as $\alpha = g(0)$,  we have
\[-\frac{\lambda}{g(t_{0})} < - K a^{n+1}  \leq t^{n}(t-a) \ \ \ \text{for every}\ t \in \R.\]
Comparing with   \eqref{eq-real-root1}, this  precludes the existence of such a real root.

 \smallskip \smallskip
\par
We complete the proof by performing successive individual 1-logarithmic modifications as above, whenever we find
a real root for the escape function.
Once a root has been worked out, generating new roots in a small disc around it,  subsequent 1-logarithmic
modifications
will not perturb any of these new roots back to the real axis,
 provided all  parameters are sufficiently small.
Finally, since $P_{\eta}(t)$ has real coefficients and even degree, there is an even  number of real roots with $n$ odd. Thus, if any such a root exists,
there are at least two of them, allowing us to choose the residues $\lambda$  in a way that
the overall 1-logarithmic modification is balanced.
\end{proof}

Remark that in the proof of Proposition \ref{prof-no-real-roots}, all residues can be obtained in $\Q^{*}$. This is particularly important
if our goal is to use logarithmic models in order to produce real meromorphic functions, as in Theorem
\ref{teo-sectorial-modeled-meromorphic} below.
The iterated application of Proposition \ref{prof-no-real-roots} in a symmetric dicritical structure
gives promptly the following result, which was used in Theorem \ref{teo-strict-logmodel-real} in order to
obtain a strict real logarithmic model.
\begin{prop}
\label{prop-withour-real-escape}
Let $\QQ = (\Delta,\Sigma,\Upsilon,\Lambda)$ be a symmetric dicritical quadruplet at $(\C^{2},0)$
and $\eta$ be a symmetric $\QQ$-logarithmic $1$-form. Then there is a finite set of balanced  symmetric logarithmic modifications, with parameters  $\textsc{T}$,
such that $\eta^{\textsc{t}}$ is a symmetric $\QQ^{\textsc{t}}$-logarithmic $1$-form without real escape points.
Besides, all individual $1$-logarithmic modifications can be taken with real parameters, with residues in $\Q^{*}$.
\end{prop}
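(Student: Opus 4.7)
The plan is to obtain $\eta^{\textsc{t}}$ by a finite descending induction on the dicritical centers of $\Delta$, applying Proposition \ref{prof-no-real-roots} at each step in a symmetric way, and then invoking Proposition \ref{prop-log-modification} to track the effect on the quadruplet. Enumerate the dicritical blow-ups of $\QQ$ as $\sigma_{j_{1}+1},\dots,\sigma_{j_{d}+1}$, centered at $q_{j_{1}},\ldots,q_{j_{d}}$ with $0 \le j_{1} < \cdots < j_{d} < n$. Because $\QQ$ is symmetric, these centers either lie in the real trace or come in pairs $\{q_{j_{k}},q_{j_{k}}^{\vee}\}$ conjugate under the lifted involution $\tilde{J}$. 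I will process them from top to bottom, from $k=d$ down to $k=1$, because a $1$-logarithmic modification at $q_{j_{k}}$ alters the escape function on $D_{j_{k}+1}$ only, leaving the escape functions on dicritical components at higher or lower levels untouched (as noted just after Proposition~\ref{prop-log-modification}).

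At the inductive step, consider $q=q_{j_{k}}$. There are two cases. If $q$ is non-real, pair it with $q^{\vee}$; at $q$, the localized $1$-form $\eta_{q}^{(j_{k})}$ produced by the amalgamation procedure of Theorem~\ref{teo-quadruplet} is main resonant and $1$-faithful. There is nothing to do regarding \emph{real} escape points on $D_{j_{k}+1}$ since this component has no real points. We still perform the trivial modification. If instead $q$ is real, then the localization $\eta_{q}^{(j_{k})}$ is itself main resonant, $1$-faithful and symmetric (both facts transported from $\eta$), so Proposition~\ref{prof-no-real-roots} furnishes a balanced symmetric set of real parameters $\tau_{q}$ with $\lambda\in\Q^{*}$ such that the resulting $\eta_{q}^{(j_{k}),\tau_{q}}$ has no real escape points on $D_{j_{k}+1}$. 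I then assemble $\tau_{q}$ into a global $1$-logarithmic modification of $\eta$ by picking the added smooth branches $S_{i}'$ to be real (which is possible because each $a_{i}'\in\R$ and each $\lambda_{i}'\in\R$), so that $\tau$ is symmetric. By Proposition~\ref{prop-log-modification}, the modified form $\eta^{\tau}$ is $\QQ^{\tau}$-logarithmic, and symmetry is preserved because $\tau$ is symmetric under $J$.

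The key verification is that modifications carried out at level $j_{k}$ do not resurrect real escape points on the divisors $D_{j_{l}+1}$ for $l>k$ previously cleared. This follows from the observation recalled above: the escape function on $D_{j_{l}+1}$ is determined by the germ of $\sigma_{j_{l}+1}^{*}$ applied to (the pullback of) $\eta$ at $q_{j_{l}}$, and adding poles supported on branches passing through $0\in\C^{2}$ and transverse to the generic direction at some lower level contributes only closed holomorphic terms at $q_{j_{l}}$, hence does not alter the polynomial $P_{\eta}$ on $D_{j_{l}+1}$. Similarly, modifications at $q_{j_{k}}$ affect only the escape function on $D_{j_{k}+1}$, so there is no interference between steps. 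The main obstacle, and the place where I need to be careful, is precisely this locality claim together with ensuring that the iterated set of parameters $\textsc{T}=\bigcup_{k}\tau_{j_{k}}$ remains simultaneously symmetric and balanced at each level; balance at each $q_{j_{k}}$ is automatic from Proposition~\ref{prof-no-real-roots}, and symmetry is maintained by adding conjugate parameters whenever $q_{j_{k}}$ is non-real (in which case we need not introduce new parameters, since there are no real roots to remove).

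After finitely many steps we reach $k=1$ and obtain $\QQ^{\textsc{t}}=(\Delta^{\textsc{t}},\Sigma^{\textsc{t}},\Upsilon^{\textsc{t}},\Lambda^{\textsc{t}})$ together with a symmetric $\QQ^{\textsc{t}}$-logarithmic $1$-form $\eta^{\textsc{t}}$ whose escape function on every dicritical component $D\subset\dic_{n}^{\textsc{t}}$ meeting the real trace has no real zeros; by construction, no dicritical component outside the real trace contributes real escape points. Moreover, all additional residues lie in $\Q^{*}$, as guaranteed by the remark following Proposition~\ref{prof-no-real-roots}, giving the final refinement in the statement.
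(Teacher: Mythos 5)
Your proposal is correct and follows essentially the same route the paper takes: the paper obtains this proposition by descending the dicritical structure from $q_{j_{d}}$ down to $q_{j_{1}}$, applying Proposition \ref{prof-no-real-roots} at each (real) dicritical center and relying on the same locality observation that a $1$-logarithmic modification at $q_{j_{k}}$ only alters the escape function on $D_{j_{k}+1}$. Your write-up merely makes explicit the symmetric pairing of non-real centers and the bookkeeping via Proposition \ref{prop-log-modification}, which the paper leaves implicit.
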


Proposition \ref{prop-withour-real-escape} completes the proof of Theorem \ref{teo-strict-logmodel-real}
on the existence of strict logarithmic models for germs of singular real analytic foliations.
Indeed, having a germ of real analytic foliation $\F_{\R}$ of real generalized curve type, we provide a q-dicritical triplet
$\TT^{\aleph}_{\R}  = \TT^{\aleph}_{\R}(\F_{\R})$ and Theorem \ref{teo-real-logmodel} gives it a logarithmic model $\eta_{\R}$.
  In this process, $\TT^{\aleph}_{\R} $ is complexified, then completed into a symmetric
 dicritical triplet $\TT$ with real indices, which turns into
a symmetric dicritical quadruplet $\QQ = (\TT,\Lambda)$  with real residues,
to finally obtain a $\QQ$-logarithmic $1$-form $\eta$, whose decomplexification is
$\eta_{\R}$.
If  $\eta_{\R}$ fails to be a strict  logarithmic model for $\F_{\R}$, that is to say, if   escape points exist,
  we   apply Proposition \ref{prop-withour-real-escape} to $\eta$.
In this case, the set of parameters  $\textsc{T}$ will be chosen in such a way  that all separatrices involved are actual real dicritical separatrices of $\F$, the
germ of singular holomorphic foliation defined by $\eta$. The corresponding logarithmic modifications  then produce  a new q-dicritical triplet $(\TT^{\aleph}_{\R})^{\textsc{t}}$, the real trace of $\TT^{\textsc{t}}$,
which in practice only incorporates the data of the real separatrices introduced,
being also a q-dicritical triplet associated with $\F_{\R}$.
Thus, $\eta_{\R}^{\textsc{t}}$, the decomplexification of $\eta^{\textsc{t}}$, is a strict logarithmic model for $\F_{\R}$.


\section{Bendixson's sectorial decomposition}
\label{section-sectorial}

The existence of a sectorial decomposition for a planar   real analytic vector field with isolated singularity
appeared in the seminal  paper  of I. Bendixson  \cite{bendixson1901}, where, following the ideas of H. Poincar\'e, he developed a qualitative study of
the orbits of a planar vector field.
Bendixson described  and proved the finiteness of such a decomposition
in a sufficiently small neighborhood of the singularity, say $0 \in \R^{2}$,
when some orbit approaches the singularity  with a ``determined tangent''.
In modern terminology, this is a \emph{characteristic orbit} and the situation considered is called \emph{non-monodromic}.
In this case, it turns out that all orbits approaching the origin have tangents, defined as limits of secants.
Finitely many of these orbits will be the  boundary of sectors,
which have a topological classification as \emph{hyperbolic}, \emph{parabolic}  or \emph{elliptic}.
In a hyperbolic sector no trajectory has the origin in its limit set,
in a parabolic one all trajectories  have the origin either in the $\alpha$
or in the $\omega$-limit sets and, finally,
in an elliptic sector all trajectories  have the origin both in their $\alpha$ and  $\omega$-limit sets.
We refer the reader to \cite{ilyashenko2008} for a proof of the existence of a sectorial decomposition based on the technique of reduction of singularities,
which unveils other  analytic aspects that will be relevant  to the forthcoming discussion.

Let $\pi_{\R}: (\tilde{M}_{\R}, \D_{\R}) \to (\R^{2},0)$ be a   sequence of real quadratic blow-ups and let
$\rho_{\R}: (\tilde{N}_{\R}, \E_{\R}) \to (\R^{2},0)$ be the corresponding   sequence of
trigonometric blow-ups, where $\tilde{N}_{\R}$ is a real analytic surface with boundaries and corners (see definitions in \cite{ilyashenko2008}).
Sequences of trigonometric blow-ups to be considered henceforth are of this kind.
There is a  canonical analytic map $\psi:(\tilde{N}_{\R}, \E_{\R}) \to (\tilde{M}_{\R}, \D_{\R})$
satisfying $\pi_{\R} \circ \psi =  \rho_{\R}$, obtained by
the consideration of the double covering $\mathbb{S}^{1} \to \mathbb{P}^{1}_{\R}$
for each single blow-up.
We associate with each component $D \subset \D_{\R}$ its real \emph{infinitesimal class} $\kappa_{D}$.
If $D$ has valence $m$, then  the subset of trace points of $D$
 has $m$ connected components, whose
 pre-images by $\psi$   are $2m$ connected components of the regular
part of $\E_{\R}$. They determine $2m$ distinct  \emph{infinitesimal semiclasses} and all these
semiclasses are said to be
\emph{correlate} to the infinitesimal class $\kappa_D$  and  also \emph{correlate} to each other, and the
same is said of the corresponding irreducible components of $\E_{\R}$ and $\D_{\R}$.
The infinitesimal semiclass of $E \subset \E_{\R}$ will be denoted by $\hat{\kappa}_E$.
Let $E_{x} \subset \E$ be the component corresponding to the positive $x$-semiaxis.
If $E, E' \subset \E$ are two distinct components, the  notation $\hat{\kappa}_{E} \prec \hat{\kappa}_{E'}$
 means that either $E = E_{x}$ or $E$ lies between $E_{x}$ and $E'$, considering the counterclockwise
 orientation around $0 \in \R^{2}$.

Let $U$ be a small neighborhood of $0 \in \R^{2}$.
An analytic \emph{semicurve} in $U \setminus \{0\}$ is   defined by a real analytic parametrization  of the kind  $\gamma:[t_{0},\infty) \to U \setminus \{0\}$ or   $\gamma:(-\infty,t_{0}] \to U \setminus \{0\}$,
for some $t_{0} \in \R$.
We shall only consider analytic semicurves $\gamma$  in $U \setminus \{0\}$ whose limit set is either $\ell(\gamma) = \{0\}$ or
$\ell(\gamma) = \emptyset$.
Let $X_{\R}$ be a germ of real analytic vector field, with isolated singularity at $(\R^{2},0)$, of non-monodromic type.
Any integral semicurve  $\gamma$   of $X_{\R}$ such that $\ell(\gamma)=0$  has the
\emph{iterated tangents} property, meaning that $\gamma$, as well as all its lifts by successive blow-ups, have   tangents at their limit points
(we refer to  \cite[Sec. 2.2]{sanz2000} for a proof).
In a sufficiently small neighborhood of $0 \in \R^{2}$, an integral semicurve does not intercept a fixed real analytic  branch (unless the semicurve is contained in the branch). Also, two integral semicurves do not intercept.
Thus, a pair   $\gamma,\gamma'$  of integral solutions of $X_{\R}$, both accumulating at the origin, can be counterclockwise ordered
having the positive $x$-semiaxis $\gamma_{x}$ as reference:
$\gamma \prec \gamma'$ means that either  $\gamma = \gamma_{x}$  or $\gamma$ lies between $\gamma_{x}$ and $\gamma'$ in a sufficiently small neighborhood of $0 \in \R^{2}$.
We also say that   $\gamma$ has type $\hat{\kappa}_{E}$, for some $E \subset \E$, if the $\rho_{\R}$-lifiting of $\gamma$ touches
$E$ in a trace point. In this case, if $E',E'' \subset \E$ are such that $\hat{\kappa}_{E'} \prec \hat{\kappa}_{E} \prec  \hat{\kappa}_{E''}$, we denote
$\hat{\kappa}_{E'} \prec \gamma \prec  \hat{\kappa}_{E''}$.

A \emph{sectorial  model}
corresponds to the prescription,
in a sufficiently small neighborhood $U$ of $0 \in \R^{2}$, of the following information:
\begin{itemize}
\item a finite number of real analytic  semicurves  $\gamma_{1},\ldots,\gamma_{n}$ in $U \setminus \{0\}$, with limit set $0 \in \R^{2}$, not intercepting each other and having the property of iterated tangents,
 such that $\gamma_{1} \prec \ldots \prec \gamma_{n} $;
\item a classification of the region $\chi = \chi(\gamma_{i},\gamma_{i+1})$, named \emph{sector}, formed by points of $U$   between $\gamma_{i}$ and $\gamma_{i+1}$, where
$i=1,\ldots,n$ and $\gamma_{n+1}  = \gamma_{1}$,
as hyperbolic, parabolic or elliptic,
with no parabolic sector neighboring other parabolic  sectors.
\end{itemize}
Evidently, in the first of these items, we could also ask other necessary conditions for the semicurves $\gamma_{i}$ to be integral curves of
a non-monodromic real analytic vector field.
Taking into account the machinery developed throughout this text, our proposal   is to produce examples of real analytic
vector fields whose sectorial decomposition fits a  prescribed sectorial model. Our techniques allow  us to do this
 in  a variety of  situations, having as output a vector field
of logarithmic nature,
i.e. whose dual $1$-form  is a multiple of a real logarithmic $1$-form by a unit in $C^{\omega}_{0}$.
We will exemplify this procedure within a  specific category of
non-monodromic real analytic vector fields that we call  \emph{$\ell$-analytic}, delimited in the following paragraph.

Denote the set of irreducible germs of real analytic curves at $(\R^{2},0)$ by $\br^{\R}_{0}$. If $S \in \br^{\R}_{0}$, then $S \setminus \{0\}$ splits into
two   \emph{semibranches},  $\gamma$ and $\gamma^{*}$, said to be \emph{adjoint} to each other.
Evidently, a semibranch $\gamma$ is an integral curve of a germ of real analytic vector field $X_{\R}$ if and only if $\gamma^{*}$ is.
An analytic semicurve $\gamma$ in $U \setminus \{0\}$   is analytic in the limit,
or   \emph{$\ell$-analytic} for short, if either
$\ell(\gamma) = \emptyset$ or
  $\ell(\gamma) = \{0\}$ and
there is  $S \in \br^{\R}_{0}$ such that $\gamma(t) \in S$ whenever $|t|$ is sufficiently large. In an abuse of   language,
we   say that $\gamma$ itself is an analytic semibranch. An analytic curve $\Gamma:(-\infty,\infty) \to  U \setminus \{0\}$ is $\ell$-analytic
if it can be partitioned in two analytic semi-curves, both $\ell$-analytic.

\begin{ddef}
\label{def-l-analytic}
{\rm We say that a germ of real analytic vector field $X_{\R}$ is \emph{$\ell$-analytic} if
all its orbits in $U \setminus 0$   are $\ell$-analytic,  for some small neighborhood $U$ of the origin.
}\end{ddef}

Clearly,
a vector field whose underlying singular  foliation is defined by the levels of a real meromorphic function
is $\ell$-analytic.
Simple
saddle-type   vector fields
--- whose  linear part has eigenvalues  $\lambda_{1}, \lambda_{2}$ satisfying  $\lambda_{2}/ \lambda_{1} \in \R_{-}$  ---
 are $\ell$-analytic. On the other hand, simple
node-type  vector fields
--- with eigenvalues
satisfying  $\lambda_{2} / \lambda_{1} \in \R_{+} \setminus \Q_{+}$  ---
 are not $\ell$-analytic.
A simple vector field with one zero eigenvalue --- an \emph{algebraic saddle-node} --- is $\ell$-analytic if and only if it is a topological saddle.
In particular, its   weak separatrix --- the one corresponding to the zero eigenvalue --- is convergent.
Indeed, otherwise, by the center manifold theorem, there would exist an orbit asymptotic to the non-convergent weak separatrix,
which would not be $\ell$-analytic. A monodromic germ of vector field is $\ell$-analytic if and only if it is of center type.
The concept of $\ell$-analytic vector field is evidently invariant under quadratic blow-ups.
Thus, if $\pi_{\R}: (\tilde{M}_{\R}, \D_{\R}) \to (\R^{2},0)$ is a reduction of singularities of $\F_{\R}$ by a   sequence of real quadratic blow-ups,
then $X_{\R}$ is $\ell$-analytic if and only if all simple  singularities  of $\tilde{\F}_{\R}= \pi_{\R}^{*}\F_{\R}$ over $\D_{\R}$  are topological saddles.

\begin{figure}[t]
\begin{center}
\includegraphics[width=5.5cm, angle=-55]{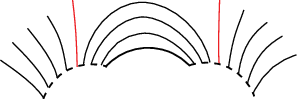}
\includegraphics[width=5.5cm, angle=-55]{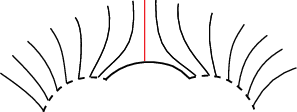}
\includegraphics[width=5.5cm, angle=-55]{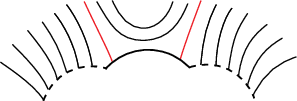}
\put(-390,30){$\underline{\tau = 0}$}
\put(-342,0){\scriptsize {$\xi'$}}
\put(-298,-62){\scriptsize {$\xi$}}
\put(-350,-75){\scriptsize {$p$}}
\put(-375,-45){\scriptsize {$p'$}}
\put(-343,-90){\scriptsize {$E$}}
\put(-387,-33){\scriptsize {$E'$}}
\put(-355,-55){\scriptsize {$\mathcal{A}$}}
\put(-313,-25){\scriptsize {$\chi(\xi,\xi')$}}
\put(-315,-37){\scriptsize {(elliptic)}}
\put(-290,-85){\scriptsize {$\chi_{E}$}}
\put(-375,17){\scriptsize {$\chi_{E'}$}}
\put(-252,30){$\underline{\tau = 1}$}
\put(-180,-26){\scriptsize {$\gamma$}}
\put(-212,-75){\scriptsize {$p$}}
\put(-237,-45){\scriptsize {$p'$}}
\put(-205,-90){\scriptsize {$E$}}
\put(-250,-33){\scriptsize {$E'$}}
\put(-220,-55){\scriptsize {$\mathcal{A}$}}
\put(-147,-90){\scriptsize {$\chi_{E}$}}
\put(-227,15){\scriptsize {$\chi_{E'}$}}
\put(-114,30){$\underline{\tau = 2}$}
\put(-64,-3){\scriptsize {$\gamma'$}}
\put(-26,-56){\scriptsize {$\gamma$}}
\put(-71,-75){\scriptsize {$p$}}
\put(-96,-45){\scriptsize {$p'$}}
\put(-64,-90){\scriptsize {$E$}}
\put(-110,-33){\scriptsize {$E'$}}
\put(-80,-55){\scriptsize {$\mathcal{A}$}}
\put(-32,-25){\scriptsize {$\chi(\gamma,\gamma')$}}
\put(-34,-37){\scriptsize {(hyperbolic)}}
\put(-9,-90){\scriptsize {$\chi_{E}$}}
\put(-89,15){\scriptsize {$\chi_{E'}$}}
\end{center}
\end{figure}

We will  try to present a more  refined description of the    sectorial  decomposition of an  $\ell$-analytic  vector field $X_{\R}$.
Let $\rho_{\R}: (\tilde{N}_{\R}, \E) \to (\R^{2},0)$ be the reduction of singularities of the associated singular foliation $\F_{\R}$  by a sequence of
trigonometric blow-ups and denote  $\tilde{\F}_{\R}^{\rho}= \rho_{\R}^{*}\F_{\R}$.
Let $E,E' \subset \E$  be consecutive  dicritical components with $\hat{\kappa}_{E} \prec \hat{\kappa}_{E'}$.
Let $\A$
denote
the union of invariant components of $\E$ between
$E$ and $E'$,  which is non-empty by the properties of the reduction of singularities.
Let $\tau$ denote  the number of separatrices --- i.e. invariant semibranches --- of  $\tilde{\F}_{\R}^{\rho}$ touching $\A$.
The figure gives a portrait for the cases $\tau=0,1$ and $2$.
Define $p = \A \cap E$ and $p' = \A  \cap E'$.
The dicritical components $E$ and $E'$
determine
two pieces of parabolic sectors,  $\chi_{E}$ and $\chi_{E'}$ --- here we say ``piece'' since they may be  proper subsectors of parabolic sectors.
We attach to them the infinitesimal semiclasses  $\hat{\kappa}_{E}$ and  $\hat{\kappa}_{E'}$, on the grounds that their interior orbits
are of types, respectively, $E$ and $E'$.
 Suppose   that $\tau = 0$.
Then, leaves of $\tilde{\F}_{\R}^{\rho}$ touching $E $ in points near  $p$
reach $E'$ in points near $p'$. Choosing $\xi$ and $\xi'$
orbits whose $\rho_{\R}$-lifts
touch, respectively, $E$ in a point sufficiently near $p$ and
$E'$ in a point sufficiently near $p'$, then $\chi(\xi,\xi')$ is an elliptic
sector. The semibranch $\xi$ will be the upper boundary of $\chi_{E}$ and
$\xi'$ will be the lower boundary of $\chi_{E'}$.
On the other hand,
 if  $\tau > 0$, then the first of the separatrices reaching   $\A$, say   $\gamma$,
will be the upper boundary of $\chi_{E}$ and the last one, say $\gamma'$,
will be the lower boundary of $\chi_{E'}$. If $\tau =1$, then $\gamma = \gamma'$ and  $\chi_{E}$
neighbor $\chi_{E'}$.  Hence, these pieces of parabolic sectors are merged, as part of the formation of a maximal parabolic sector.
On the other hand, if $\tau > 1$, there are $\tau-1$ hyperbolic sectors between  $\chi_{E}$ and $\chi_{E'}$, each
one bounded by a consecutive pair of separatrices of $\A$.
In other words, hyperbolic sectors are bounded by isolated invariant semibranches of $\F_{\R}$.
Observe that elliptic sectors always neighbor parabolic sectors and
their boundaries are always dicritical invariant semibranches of $\F_{\R}$.
By possibly reducing $U$ and changing the choice of some of these semibranches, we can also suppose
that a semibranch $\gamma$ is the boundary
 of a sector  or of a piece of sector if and only if $\gamma^{*}$ is.
Finally, it is also clear that if $E \subset \E$ is a dicritical component defining a  piece of parabolic sector, then all components of $\E$ correlate to $E$ also define pieces of parabolic sectors.

The above description matches the   model we describe next.
An \emph{infinitesimal sentence} is an object $\W = W_{1} W_{2} \cdots W_{r}$,
formed by \emph{words}  $W_{i}$   of one of two \emph{types}:  either  $W_{i}=\gamma_{i}$
is a semibranch
or $W_{i}= \hat{\kappa}_{i}$ is an infinitesimal semiclass.
 The following syntax must be obeyed:
\begin{itemize}
\item $W_{1} \prec W_{2} \prec  \cdots  \prec W_{r}$, where ``$\prec$'' denotes the counterclockwise order
of semibranches and semiclasses described above;
\item if $W_{i}= \gamma_{i}$ is a semibranch, then the adjoint $\gamma_{i}^{*}$ is also a word of $\W$;
\item if $W_{i}= \hat{\kappa}_{i}$ is an infinitesimal semiclass, then all semiclasses correlate to $\hat{\kappa}_{i}$
are words of $\W$.
\end{itemize}
For convenience, consider  $W_{0} = W_{r}$  and $W_{r+1} = W_{1}$. We then establish the following:
\begin{ddef}
\label{def-sectorial-modeled}
{\rm
 Let $\W$ be an infinitesimal sentence as above.
An \emph{$\ell$-analytic sectorial   model}
of pattern $\W$ is the prescription of a sectorial decomposition of a small neighborhood of $0 \in \R^{2}$
complying with the following conditions:
\begin{itemize}
\item if $W_{i} = \gamma_{i}$ is a semibranch, then it is the boundary of a hyperbolic sector or of a piece of parabolic  sector;
\item if $W_{i} = \hat{\kappa}_{i}$ is an infinitesimal semiclass, then it defines a  piece of parabolic sector;
if $W_{i-1}$ or $W_{i+1}$ is a semibranch, say $\gamma$, then $\gamma$ is in its  boundary;
\item if $W_{i} = \gamma$ and $W_{i+1} = \gamma'$ are semibranches, then $\chi(\gamma,\gamma')$ is a hyperbolic sector;
\item if $W_{i} = \hat{\kappa}$ and $W_{i+1} = \hat{\kappa}'$ are infinitesimal semiclasses, then there is an elliptic sector $\chi(\xi,\xi')$,
bounded by  semibranches $\xi$
and $\xi'$ of types, respectively,
$\hat{\kappa}$ and $\hat{\kappa}'$;
these semibranches are also boundaries of  pieces of parabolic sectors; 
\item   neighboring pieces of parabolic sectors are merged into a single parabolic sector;
\end{itemize}
}\end{ddef}
From the definition, we infer  that  parabolic sectors correspond, in the sentence $\W$, to    maximal sequences of words of alternating types.
Aiming at giving them a more accurate description, we could say, for instance, that each such a sequence   determines an \emph{infinitesimal multitype}
for the corresponding parabolic sector.

From our discussion, every $\ell$-analytic   vector field has a sectorial decomposition of the above type.
Reciprocally, we have:

\begin{maintheorem}
\label{teo-sectorial-modeled}
There are $\ell$-analytic   vector fields fitting any preassigned $\ell$-analytic  sectorial  model.
\end{maintheorem}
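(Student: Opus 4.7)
The plan is to reverse-engineer the infinitesimal sentence $\W=W_1\cdots W_r$ into a real q-dicritical triplet $\TT^{\aleph}_{\R}=(\Delta^{\aleph}_{\R},\Sigma^{\aleph}_{\R},\Upsilon^{\aleph}_{\R})$, apply Theorem~\ref{teo-real-logmodel} to produce a real logarithmic $1$-form $\eta_{\R}$, then take the dual vector field $X_{\R}$ and verify that its Bendixson decomposition coincides with the prescribed model.

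First, I would extract the combinatorial skeleton of $\pi_{\R}$. The words $W_{i}=\hat{\kappa}_{i}$ prescribe the dicritical infinitesimal semiclasses, and by symmetry of correlate classes they actually prescribe the underlying real quadratic blow-up tree together with its distinguished dicritical components; this builds $\Delta^{\aleph}_{\R}$. The words $W_{i}=\gamma_{i}$ prescribe the real analytic branches (pairs of adjoint semibranches) that must be invariant; I put them into $\Sigma^{\aleph}_{\R}$ as isolated separatrices, after possibly performing further real blow-ups to desingularize them (this does not affect the sentence $\W$ because such blow-ups are non-dicritical). As for $\Upsilon^{\aleph}_{\R}$, I choose, at each singular point of the support at the final level, Camacho-Sad indices in $\R_{-}\setminus\Q_{-}$ satisfying the simple singularity rule~(I.4); this is possible essentially because (I.3) is not required of a q-triplet, so I may fix indices freely at real corners and let the indices at points bearing an isolated separatrix absorb the discrepancy. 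The key point is that the indices be strictly negative irrationals: this will guarantee that every simple singularity of the resulting foliation is a topological saddle, which by the remark in Section~\ref{section-sectorial} forces the vector field to be $\ell$-analytic.

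Now I invoke Theorem~\ref{teo-real-logmodel} to obtain a real logarithmic model $\eta_{\R}$ for $\TT^{\aleph}_{\R}$, and Proposition~\ref{prop-withour-real-escape} together with the discussion following it to eliminate real escape points by adjoining extra real dicritical separatrices (which enlarges $\sep$ but preserves the dicritical components, hence the infinitesimal semiclasses appearing in $\W$). Let $X_{\R}$ be any germ of real analytic vector field dual to $\eta_{\R}$, rescaled by a real analytic unit so as to be holomorphic with isolated singularity. Since all simple singularities of $\pi^{*}_{\R}\F_{\R}$ are topological saddles (negative real index ratio), $X_{\R}$ is $\ell$-analytic.

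It remains to match the sectorial decomposition of $X_{\R}$ with the model. Passing to the trigonometric blow-up $\rho_{\R}$ correlate to $\pi_{\R}$, each dicritical component $E\subset\E$ produces a piece of parabolic sector $\chi_{E}$ of type $\hat{\kappa}_{E}$; each isolated real separatrix of $\F_{\R}$ touching the invariant chain $\A$ between two consecutive dicritical components contributes a boundary semibranch bounding an adjacent hyperbolic sector, as described in the paragraph preceding Definition~\ref{def-sectorial-modeled}; and if no such separatrix exists (the $\tau=0$ case), an elliptic sector arises between the two parabolic pieces. By construction of $\TT^{\aleph}_{\R}$, the consecutive words of $\W$ reproduce exactly this alternation, and adjacent parabolic pieces separated by a single separatrix coalesce into a maximal parabolic sector of the prescribed infinitesimal multitype. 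The main obstacle in making this precise is the last step: verifying that no \emph{spurious} characteristic orbits appear (which would introduce unwanted sectors). This is handled by the faithfulness and strictness of $\eta_{\R}$, since the reduction of singularities of $\F_{\R}$ coincides with $\pi_{\R}$ and hence with the combinatorial input of $\W$, so the sectors read off from $\pi_{\R}$ exhaust all sectors of $X_{\R}$.
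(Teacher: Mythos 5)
Your proposal follows essentially the same route as the paper: build a real q-dicritical triplet from the infinitesimal sentence (dicritical structure from the semiclasses, separatrices from the semibranches, negative indices at final-level singular points respecting (I.4)), invoke Theorem \ref{teo-real-logmodel}, remove real escape points via Proposition \ref{prop-withour-real-escape}, and take the dual vector field. Your closing paragraph matching sectors to words just makes explicit the discussion the paper gives before Definition \ref{def-sectorial-modeled}, so the argument is correct and not materially different.
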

\begin{proof}
Let $\W$ be the infinitesimal sentence subjacent to the given $\ell$-analytic  sectorial  model.
Let $\sep \subset \br_{0}^{\R}$ be the set of branches whose semibranches are words of $\W$
and $\varepsilon = \varepsilon(\sep)$ be its real equisingularity class.
Let $\kappa$ be the real infinitesimal class that aggregates all semiclasses that are words of $\W$.
Let $\pi_{\R}$ be a sequence of real quadratic blow-ups that is a minimal simultaneous  realization for $\kappa$ and $\varepsilon$.
Consider   a real q-dicritical triplet $\TT^{\aleph}_{\R}$ built upon  $\pi_{\R}$, with the dicritical structure defined by $\kappa$ and separatrices given
by   $\sep$. As for the indices,  for each   singular point $p$ at  the final level, choose, for
the two local branches of the support at $p$, negative indices respecting (I.4).
By Theorem \ref{teo-real-logmodel},  there is a real  logarithmic 1-form  $\eta_{\R}$, with real residues,
which is a   real  logarithmic model for  $\TT^{\aleph}_{\R}$.
Using  Proposition \ref{prop-withour-real-escape}, after a finite number of logarithmic modifications, we obtain a real logarithmic $1$-form $\eta^{\textsc{t}}$ without
escape points.
We cancel the poles of $\eta^{\textsc{t}}$ and take   the dual vector field.
This vector field has  the  preassigned $\ell$-analytic  sectorial  model
\end{proof}

We can also obtain a  version of the above theorem concerning the existence of real meromorphic functions:
\begin{maintheorem}
\label{teo-sectorial-modeled-meromorphic}
There are real analytic meromorphic functions fitting any preassigned $\ell$-analytic  sectorial  model.
\end{maintheorem}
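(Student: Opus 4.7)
The strategy is to adapt the proof of Theorem \ref{teo-sectorial-modeled} so that the logarithmic model constructed along the way acquires rational (and then, after rescaling, integer) residues, and then to exhibit the desired real meromorphic function $h$ as in the closing argument of Theorem \ref{teo-structure-meromporphic}. Concretely, from the infinitesimal sentence $\W$ underlying the prescribed $\ell$-analytic sectorial model we extract the finite set $\sep \subset \br_{0}^{\R}$ of branches whose semibranches are words of $\W$ and the real infinitesimal class $\kappa$ aggregating the semiclass words of $\W$, then take $\pi_{\R}$ to be a minimal real sequence of quadratic blow-ups realizing both $\kappa$ and $\varepsilon(\sep)$, and build over $\pi_{\R}$ a real q-dicritical triplet $\TT^{\aleph}_{\R} = (\Delta^{\aleph}_{\R},\Sigma^{\aleph}_{\R},\Upsilon^{\aleph}_{\R})$ exactly as in the proof of Theorem \ref{teo-sectorial-modeled}, but choosing at each singular point of the final support a pair of indices in $\Q_{-}$ compatible with (I.4); this is possible since (I.4) relates two indices by inversion, and $\Q_{-}$ is closed under $x \mapsto 1/x$.

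Next, I would apply Theorem \ref{teo-real-logmodel} to $\TT^{\aleph}_{\R}$, inspecting the induction of Theorem \ref{teo-quadruplet} as in the proof of Theorem \ref{teo-structure-meromporphic-real}. The initialization at the final level via Proposition \ref{prop-residues-leveln} departs from an arbitrary $\lambda_D \in \Q^{*}$ on the maximal element of each connected component $\A_{n,k}^{\iota}$, and the amalgamation at both non-dicritical steps (equation \eqref{eq-amalgamation1}) and dicritical steps (where equation \eqref{eq-linear} admits a $\Q^{*}$-solution whenever its coefficients $\rho_{p_k}$ lie in $\Q^{*}$) keeps all residues in $\Q^{*}$. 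The outcome is a symmetric dicritical quadruplet with rational residues together with a symmetric $\QQ$-logarithmic $1$-form $\eta$ whose residues are in $\Q^{*}$.

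To eliminate real escape points I would invoke Proposition \ref{prop-withour-real-escape}, using the explicit remark there that every individual balanced symmetric $1$-logarithmic modification can be realized with real parameters and residues in $\Q^{*}$. This yields a symmetric $\QQ^{\textsc{t}}$-logarithmic $1$-form $\eta^{\textsc{t}}$, still with rational residues and free of real escape points; its restriction to the real trace is a real logarithmic model without real escape points for an augmented real q-dicritical triplet obtained from $\TT^{\aleph}_{\R}$ by adjoining the newly introduced real dicritical separatrices. By the same sectorial discussion carried out in the proof of Theorem \ref{teo-sectorial-modeled}, the foliation defined by $\eta^{\textsc{t}}_{\R}$ realizes the preassigned $\ell$-analytic sectorial model.

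Finally, writing
\[
\eta^{\textsc{t}}_{\R} = \sum_{S \in \sep^{\textsc{t}}_{0}} \lambda_{S} \, \frac{d f_{S}}{f_{S}} + \alpha, \qquad \lambda_{S} \in \Q^{*},
\]
I would absorb the closed holomorphic part $\alpha$ by replacing selected $f_{S}$ with $u_{S} f_{S}$ for suitable units $u_{S}$, as in the proof of Theorem \ref{teo-structure-meromporphic}, and then multiply by the least common multiple of the denominators of the residues to get integers $n_{S} \in \Z^{*}$ and
\[
h = \prod_{S \in \sep^{\textsc{t}}_{0}} f_{S}^{n_{S}},
\]
a germ of real meromorphic function at $(\R^{2},0)$ with $dh/h$ proportional to $\eta^{\textsc{t}}_{\R}$. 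Since $h$ and $\eta^{\textsc{t}}_{\R}$ define the same real analytic foliation, the orbits of any dual vector field agree with those analyzed in the proof of Theorem \ref{teo-sectorial-modeled}, so $h$ realizes the prescribed sectorial model. The main subtlety is not any single step but the bookkeeping required to verify that rationality of residues survives \emph{simultaneously} the amalgamation of Theorem \ref{teo-quadruplet} and the iterated logarithmic modifications of Proposition \ref{prop-withour-real-escape}; both points, however, are explicitly granted by the cited results.
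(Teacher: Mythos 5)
Your proposal is correct and follows essentially the same route as the paper: the paper's own proof simply combines the construction of Theorem \ref{teo-sectorial-modeled} with the rational-residue bookkeeping of Theorem \ref{teo-structure-meromporphic} and the $\Q^{*}$-parameter logarithmic modifications of Proposition \ref{prop-withour-real-escape}, exactly as you describe. Your version is merely more explicit about where rationality must be tracked, which matches the intent of the cited results.
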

\begin{proof}
We follow the steps of the proof of Theorem \ref{teo-sectorial-modeled}, associating with the given sectorial model a dicritical class $\kappa$ and an equisingularity class
$\varepsilon$.
Then, as  in Theorem  \ref{teo-structure-meromporphic}, we produce a real meromorphic function upon these data,
using, in this process, logarithmic modifications with residues in $\Q^{*}$ in order to get rid of all real escape points, as proposed in
 Proposition \ref{prop-withour-real-escape}.
\end{proof}

\section{Examples}
\label{sec-examples}
As an illustration of methods and concepts introduced in the text, we present below a pair of examples.
In order to lighten notation, we denote by the same symbol a curve and its strict transform by blow-up maps.
Ambiguity will be avoided by making precise the level we are working at.
Also,  all  pictures presented correspond to the real trace.

\begin{example}{\rm
Consider, at $(\C^{2},0)$, the branches
\[ S_{1}: f_{1} = x-y^{2} = 0 \quad \text{and} \quad S_{2}: f_{2} = x+y^{2} = 0.\] 
Let $\pi = \sigma_{1} \circ \sigma_{2}$ be the sequence of  blow-ups desingularizing $\sep_{0}^{\aleph} = \{S_{1},S_{2}\}$. 
We will produce a symmetric logarithmic model, with rational residues,   in which $\sep_{0}^{\aleph}$  are separatrices at level 0 of a q-dicritical duplet, corresponding to
a dicritical structure determined by $\pi$, where $\sigma_{1}$ is a non-dicritical blow-up and $\sigma_{2}$
is a dicritical one.

\begin{figure}[h]
\begin{center}
\includegraphics[width=12cm]{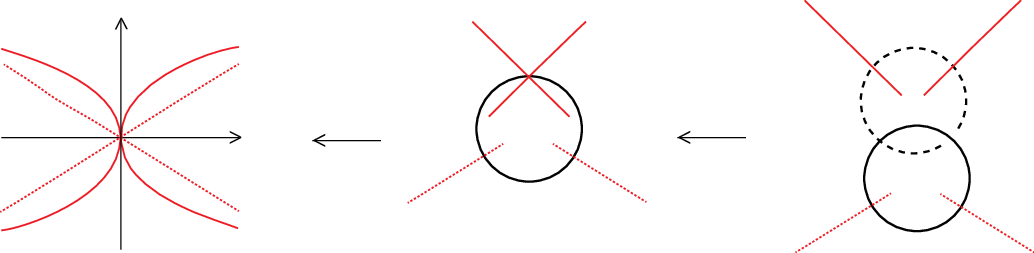}
\put(-260,73){\scriptsize {$S_{1}$}}
\put(-355,73){\scriptsize {$S_{2}$}}
\put(-263,20){\scriptsize {$S_{3}$}}
\put(-352,20){\scriptsize {$S_{4}$}}
\put(-230,43){\scriptsize {$\sigma_{1}$}}
\put(-110,43){\scriptsize {$\sigma_{2}$}}
\put(-170,13){\scriptsize {$D_{1}$}}
\put(-170,50){\scriptsize {$p$}}
\put(-43,-3){\scriptsize {$D_{1}$}}
\put(-43,73){\scriptsize {$D_{2}$}}
\put(-145,73){\scriptsize {$S_{1}$}}
\put(-198,73){\scriptsize {$S_{2}$}}
\put(-130,23){\scriptsize {$S_{3}$}}
\put(-213,23){\scriptsize {$S_{4}$}}
\put(-2,81){\scriptsize {$S_{1}$}}
\put(-88,81){\scriptsize {$S_{2}$}}
\put(-03,5){\scriptsize {$S_{3}$}}
\put(-86,5){\scriptsize {$S_{4}$}}
\end{center}
\end{figure}

 Denote by $D_{1}$ and $D_{2}$ the components of the exceptional divisor introduced by, respectively, $\sigma_{1}$ and $\sigma_{2}$.
In order that (S.2) is satisfied by $D_{1}$ at the final level, we introduce the pair non-real conjugate branches (represented in the picture by   dotted lines)
\[ S_{3}: f_{3} = y- \ic x =0 \quad \text{and} \quad  S_{3}: f_{4} = y+ \ic x =0 ,\]
so that $\sep_{0} = \{S_{1},S_{2},S_{3},S_{4}\}$ are separatrices at level 0 of a dicritical duplet.
We shall now work at level 1. Let $p \in D_{1}$ be the center of $\sigma_{2}$. If we set $\lambda_{D_{1}} = 1$,
in order to have a zero weighted balance of residues at $p$, we can put, for instance
$\lambda_{S_{1}}= \lambda_{S_{2}}= -1/2$.
Let $q_{3}$ and $q_{4}$ denote the points of intersection of $S_{3}$ and $S_{4}$ with $D_{1}$.
Since $i_{p}(D_{1}) = 1$ and $c(D_{1})=-1$, in order to have (I.3) satisfied on $D_{1}$, we can put $i_{q_{3}}(D_{1}) = i_{q_{4}}(D_{1}) = -1$. Since $\sigma_{2}$ is trivial
over $q_{3}$ and $q_{4}$, we have, by (I.4),  $i_{q_{3}}(S_{3}) = i_{q_{4}}(S_{4}) = -1$.
By the consistency condition of Definition \ref{def-consistent}, we then find $\lambda_{S_{3}}= \lambda_{S_{4}}= 1$.
The residues obtained are such that

\[ \eta= - \frac{1}{2} \frac{df_{1}}{f_{1}} - \frac{1}{2} \frac{df_{2}}{f_{2}} +
\frac{df_{3}}{f_{3}} + \frac{df_{4}}{f_{4}}
= \frac{1}{2} \frac{dh}{h}  , \quad
\text{where} \quad h = \frac{f_{3}^{2}f_{4}^{2}}{f_{1}f_{2}}  ,
\]
is a logarithmic model for the data described above.
Let us write $\sigma_{1}$ in coordinates $(u,y)$ such that $x=uy$ and, then,
the blow-up $\sigma_{2}$ at $p$   in coordinates $(u,t)$ such that $y= tu$.
In these coordinates, at level 2, we have  $S_{1}: t-1=0$, $S_{2}: t+1=0$
and $D_{1}: t=0$.
The escape function corresponding to $\eta_{1}= \sigma_{1}^{*} \eta$ at $p$ is
\[R_{\eta_{1}} = \frac{1}{t} + \frac{-1/2}{t-1}+ \frac{-1/2}{t+1} = \frac{-1}{t(t+1)(t-1)} = \frac{P_{\eta_{1}}}{Q_{\eta_{1}}}. \]
Since  $\deg(P_{\eta_{1}}) < 1$,  $\eta_{2} = \sigma_{2}^{*} \eta_{1} = \pi^{*} \eta$ has a point of tangency with $D_{2}$ at the point corresponding to $t= \infty$.
The level $-1$ of $h$ has equation
\[f_{3}^{2}f_{4}^{2}+f_{1}f_{2} = x^{2}(x^{2} + 2y^2 + 1) .\]
That is, the $y$-axis  is a fiber of multiplicity two, thereby contained in $\sing(\eta)$. This one-dimensional
component accounts for the point of tangency detected. It is not a escape separatrix, since it does not force the introduction of additional blow-ups in the
desingularization process.

\begin{figure}[h]
\begin{center}
\includegraphics[height=4cm]{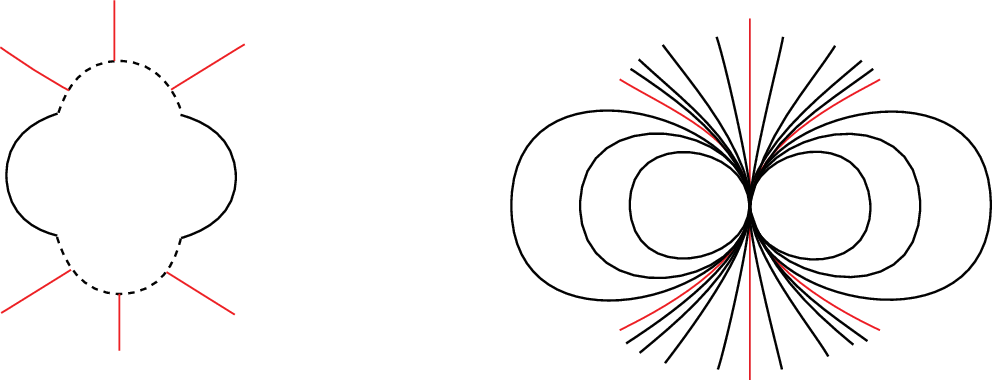}
\put(-220,104){\scriptsize $S_{1}$}
\put(-310,104){\scriptsize $S_{2}$}
\put(-222,11){\scriptsize $S_{2}$}
\put(-308,11){\scriptsize $S_{1}$}
\put(-253,97){\scriptsize $E_{1}$}
\put(-253,17){\scriptsize $E_{2}$}
\end{center}
\end{figure}
The real figure of this example corresponds to an $\ell$-analytic sectorial model   patterned on the infinitesimal
sentence
$\cl{W}=W_{1}W_{2}$, whose words are the infinitesimal semiclasses $W_{1} = \hat{\kappa}_{E_{1}}$ and
$W_{2} = \hat{\kappa}_{E_{2}}$. The semiclasses $\hat{\kappa}_{E_{i}}$, $i=1,2$, yield two parabolic sectors, while the transitions
$\hat{\kappa}_{E_{i}}  \leftrightarrow \hat{\kappa}_{E_{j}}$, $i \neq j$,   two elliptic sectors.
}\end{example}

\begin{example}
{\rm Take, for $i=1,2,3,4$,  the germs  of cuspids at $(\C^{2},0)$ defined by the equations
\[ S_{i}: f_{i}(x,y) = x^3 - (y-a_{i}x)^2 =0, \]
where $a_{1} = 1$, $a_{2} = -1$, $a_{3} = 4$, $a_{4} = -4$.
We consider $\sep_{0} = \{S_{1},S_{2},S_{3},S_{4}\}$ as the initial level  of a configuration of separatrices framed on a
dicritical structure determined by the sequence of blow-ups that desingularizes $\sep_{0}$, in which only the first blow-up
is dicritical.
We produce a logarithmic model with these data having rational residues.

\begin{figure}[h]
\begin{center}
\includegraphics[width=8cm]{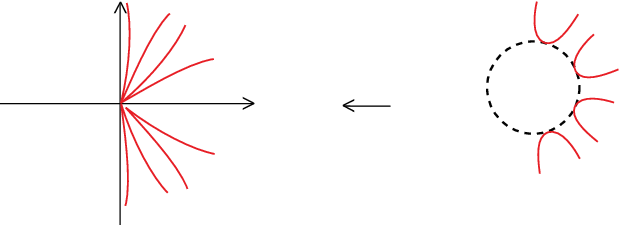}
\put(-155,68){\scriptsize {$S_{1}$}}
\put(-177,82){\scriptsize {$S_{3}$}}
\put(-155,17){\scriptsize {$S_{2}$}}
\put(-177,3){\scriptsize {$S_{4}$}}
\put(-5,65){\scriptsize {$S_{1}$}}
\put(-25,82){\scriptsize {$S_{3}$}}
\put(-5,33){\scriptsize {$S_{2}$}}
\put(-23,14){\scriptsize {$S_{4}$}}
\put(-94,49){\scriptsize {$\sigma_{1}$}}
\put(-58,33){\scriptsize {$D_{1}$}}
\end{center}
\end{figure}

In order to zero the weighted balance of residues
at $0 \in \C^{2}$, we can take, for instance, $\lambda_{1} = \lambda_{3} = 1/10$ and $\lambda_{2} = \lambda_{4} = -1/10$. We then  set
\[ \eta = \sum_{i=1}^{4} \lambda_{i} \frac{df_{i}}{f_{i}} = \frac{1}{10} \frac{dh}{h}, \quad \text{where}\ \quad  h = \frac{f_{1} f_{3}}{f_{2} f_{4}}.\]
Denoting by $\sigma_{1}: (\tilde{\C^{2}},D_{1}) \to (\C^{2},0)$  the dicritical   blow-up at $0 \in \C^{2}$ and considering  coordinates
$(x,t)$ such that $y=tx$, at level 1, each   $S_{i}$ has  $x  - (t-a_{i})^2 =0$ as equation.
Hence, in these coordinates, we get the escape function
\[ R_{\eta} = \frac{1}{10} \left( \frac{1}{t-1} + \frac{-1}{t+1} + \frac{1}{t-4} + \frac{-1}{t+4} \right) = \frac{t^2 -4}{(t^{2}-1)(t^{2}-16)}, \]
which has two affine zeroes, $t = 2$ and $t = -2$.
The curves of equations
\[4(t-2)^4-9(t-2)^3-8(t-2)^2x-18(t-2)^2+9(t-2)x+4x^2-18x = 0\]
and
\[-4(t+2)^4-9(t+2)^3+8(t+2)^2x+18(t+2)^2+9(t+2)x-4x^2+18x = 0\]
are invariant by $\eta_{1} = \sigma^{*} \eta$ and are tangent to the divisor $D_{1}$ at, respectively, $t=2$ and $t=-2$
(they correspond  to the level curves $h= 1/81$ and $h=81$).
Their local branches at these points are, thus, escape separatrices.

Now, let us consider a set of parameters of the form $\tau = \{(2+a,\lambda),(-2-a,-\lambda)$, where $a,\lambda \in \R^{*}$ are small, to be determined
following the guidelines of the proof of Proposition \ref{prof-no-real-roots}. We consider the balanced symmetric logarithmic modification
with parameters $\tau$ given, for instance, by
\[\eta^{\tau} = \eta + \lambda \frac{d g_{1}}{g_{1}} - \lambda \frac{d g_{2}}{g_{2}}, \]
where $g_{1} = y - (2+a)x$ and  $g_{2} = y + (2+a)x$.
Its escape function is
\[  R_{\eta^{\tau}} =  R_{\eta} + \frac{\lambda}{t-(2+a)} + \frac{-\lambda}{t+(2+a)}=  \frac{t^2 -4}{(t^{2}-1)(t^{2}-16)}  + \frac{2 \lambda(2+a)}{t^{2}-(2+a)^{2}}
= \frac{ P_{\eta^{\tau}}}{ Q_{\eta^{\tau}}} ,\]
whose numerator is the biquadratic  polynomial
\[ P_{\eta^{\tau}}(t) = (1 +2\lambda (2+a))t^4 +(-(2+a)^2 - 4 -34(2+a)\lambda)t^2 + (4(2+a)^2 + + 32 (2+a)\lambda).\]
The discriminant of the corresponding  quadratic polynomial obtained by setting $t^{2} = z$ is
\[ \Delta = (2+a)^4 + 36 (2+a)^3 \lambda + 900 (2+a)^2 \lambda^2 -8(2+a)^2 -144(2+a)\lambda + 16 .\]
Taking, for instance, $a=1/10$ and $\lambda = -1/10$ we get $\Delta = -237215/10000 <0$.
This means that  $P_{\eta^{\tau}}(t)$ has no real roots and, thus, $\eta^{\tau}$ has no real escape points.
Using these values, we have
$g_{1} = y - (2+1/10)x$ and $g_{2} = y + (2+1/10)x$, so that
\[\eta^{\tau} = \eta - \frac{1}{10} \frac{d g_{1}}{g_{1}} +  \frac{1}{10} \frac{d g_{2}}{g_{2}} =
\frac{1}{10} \frac{dh_{1}}{h_{1}}, \quad \text{where} \quad   h_{1} = h \frac{g_{2}}{g_{1}} =   h = \frac{f_{1} f_{3}g_{2}}{f_{2} f_{4}g_{1}}.\]
We invitine the reader to draw the  figure corresponding to  the  sectorial decomposition of this example and also to try to devise other examples by himself.
}\end{example}

Our method, as  depicted in the two examples above, allows the construction of explicit examples of real meromorphic
(or real rational) functions whose level sets satisfy prescribed $\ell$-sectorial models. These level set are real analytic
(or real algebraic) curves that, regarding their portions on the elliptic sectors as petals, may  have a pleasant flower-like shape.

\bibliographystyle{plain}
\bibliography{referencias}

\end{document}